\documentclass[11pt]{amsart}

\usepackage[margin=3cm]{geometry}
\setlength{\parskip}{0.0cm plus0.05cm minus0.025cm}

\usepackage{amsmath,amsfonts,amssymb,amsthm,graphicx,overpic,hyperref,mathabx}
\usepackage{float,enumitem}
\usepackage[toc,page]{appendix}
\usepackage{hyperref}
\usepackage{graphicx, subcaption}

\newtheorem{thm}{Theorem}[section]
\newtheorem{prp}[thm]{Proposition}

\newtheorem{lem}[thm]{Lemma}

\newtheorem{theostar}{Theorem}

\newtheorem{conj}[theostar]{Conjecture}

\numberwithin{equation}{section}

\newcommand{\nc}{\newcommand}
\nc{\dmo}{\DeclareMathOperator}

\nc{\abs}[1]{\left| #1 \right|}
\nc{\bigO}[1]{\mathcal{O}\left(#1\right)}
\nc{\card}[1]{\left|#1\right|}
\nc{\ceil}[1]{\left\lceil #1 \right\rceil}
\nc{\CC}{\mathbb{C}}
\nc{\floor}[1]{\left\lfloor #1 \right\rfloor}
\nc{\ZZ}{\mathbb{Z}}
\nc{\len}[1]{\left| #1 \right|}
\nc{\littleo}[1]{o\left(#1\right)}
\dmo{\Mat}{Mat}
\nc{\NN}{\mathbb{N}}
\nc{\norm}[1]{\left|\left| #1 \right|\right|}
\nc{\QQ}{\mathbb{Q}}
\nc{\RR}{\mathbb{R}}
\nc{\st}[2]{\left\{ #1 ;\; #2\right\}}
\dmo{\supp}{supp}
\nc{\tr}[1]{\mathrm{tr}\left(#1\right)}

\dmo{\area}{area}
\dmo{\conv}{conv}
\dmo{\diam}{diam}
\dmo{\dist}{\mathrm{d}}
\nc{\HH}{\mathbb{H}}
\dmo{\MCG}{MCG}
\dmo{\MPL}{MPL}
\dmo{\Mod}{\mathcal{M}}
\dmo{\PL}{PL}
\nc{\Sphere}{\mathbb{S}}
\dmo{\sys}{sys}
\dmo{\Teich}{\mathcal{T}}
\nc{\Torus}{\mathbb{T}}
\dmo{\vol}{vol}
\dmo{\WP}{WP}

\dmo{\convTV}{\;\stackrel{\mathrm{TV}}{\longrightarrow}\;}
\nc{\ExV}[2]{\mathbb{E}_{#1}\left[#2\right]}
\dmo{\EE}{\mathbb{E}}
\nc{\Pro}[2]{\mathbb{P}_{#1}\left[#2\right]}
\dmo{\PP}{\mathbb{P}}
\nc{\distTV}[2]{\mathrm{d}_{\rm TV}\left(#1,#2\right)}
\dmo{\UU}{\mathbb{U}}
\nc{\Var}[2]{\mathbb{V}\mathrm{ar}_{#1}\left[#2\right]}

\dmo{\alt}{\mathfrak{A}}
\dmo{\Aut}{Aut}
\dmo{\Fix}{Fix}
\dmo{\Hom}{Hom}
\dmo{\PSL}{PSL}
\dmo{\Rep}{Rep}
\dmo{\sym}{\mathfrak{S}}
\dmo{\inv}{\mathcal{I}}
\dmo{\orb}{\mathcal{O}}
\dmo{\stab}{Stab}
\dmo{\cent}{Z}
\nc{\cox}{\Gamma^\mathrm{Cox}}
\nc{\art}{\Gamma^\mathrm{Art}}
\nc{\eps}{\varepsilon}
\nc{\id}{\mathrm{Id}}
\nc{\bs}{\backslash}
\nc{\isom}{\mathrm{Isom}}

\DeclareFontFamily{U}{wncy}{}
\DeclareFontShape{U}{wncy}{m}{n}{<->wncyr10}{}
\DeclareSymbolFont{mcy}{U}{wncy}{m}{n}
\DeclareMathSymbol{\Loba}{\mathord}{mcy}{"4C} 

\usepackage{ifthen, srcltx}
\usepackage{color}

\newcommand{\showcomments}{yes}

\newsavebox{\commentbox}
\newenvironment{comment}%
{\ifthenelse{\equal{\showcomments}{yes}}%
{\footnotemark
        \begin{lrbox}{\commentbox}
        \begin{minipage}[t]{1.25in}\raggedright\sffamily\tiny
        \footnotemark[\arabic{footnote}]}
{\begin{lrbox}{\commentbox}}}
{\ifthenelse{\equal{\showcomments}{yes}}
{\end{minipage}\end{lrbox}\marginpar{\usebox{\commentbox}}}
{\end{lrbox}}}

\title{Subgroup growth of right-angled Artin and Coxeter groups}

\author{Hyungryul Baik, Bram Petri and Jean Raimbault}
  \address{Department of Mathematical Sciences, KAIST, 291 Daehak-ro Yuseong-gu, Daejeon, 34141, South Korea }
  \email{hrbaik@kaist.ac.kr}
  \address{Mathematical Institute, University of Bonn, Endenicher Allee 60, Bonn, Germany}
  \email{bpetri@math.uni-bonn.de}
  \address{Institut de Math\'ematiques de Toulouse ; UMR5219 \\ Universit\'e de Toulouse ; CNRS \\ UPS IMT, F-31062 Toulouse Cedex 9, France}
  \email{Jean.Raimbault@math.univ-toulouse.fr}

\thanks{
H.~B. was partially supported by Samsung Science \& Technology Foundation grant No. SSTF-BA1702-01.
B.~P. gratefully acknowledges support from the ERC Advanced Grant ``Moduli''. 
J.~R. was supported by the grant ANR-16-CE40-0022-01 - AGIRA. }

\begin{document}

\begin{abstract}
  We determine the factorial growth rate of the number of finite index subgroups of right-angled Artin groups as a function of the index. This turns out to depend solely on the independence number of the defining graph. We also make a conjecture for right-angled Coxeter groups and prove that it holds in a limited setting.
\end{abstract}

\maketitle

\section{Introduction}

\subsection{Subgroup growth}

This paper is a contribution to the topic of {\em subgroup growth}. This is the study of the functions $n \mapsto s_n(\Gamma)$ as $n \to +\infty$, where $\Gamma$ is a finitely generated group and
\[
s_n(\Gamma) = \card{\{ \Gamma' \le \Gamma : |\Gamma/\Gamma'| = n\}}
\]
is the number of subgroups of index exactly $n$ in $\Gamma$. An introduction to the topic, together with a survey of the state of the art at the beginning of the millenium, are given in the book \cite{Lubotzky_Segal} by A.~Lubotzky and D.~Segal. In what follows, we will recall only the parts of the theory which are directly relevant to what we want to do.

The starting point for most results in this area is the immediate relation between subgroups and {\em transitive permutation representations}: if $\Gamma$ contains a subgroup $\Gamma'$ with index $n$ then the action on the left-cosets $\Gamma/\Gamma'$ gives, once they are labeled with $1, \ldots, n$, (with the coset identity labeled 1, but otherwise arbitrarily) a homomorphism from $\Gamma$ to the symmetric groups $\sym_n$ (the group of bijections of the set $\{1, \ldots, n\}$), the image of which acts transitively on $\{1,\ldots,n\}$. In the other direction, if $\Gamma$ acts transitively on $\{1, \ldots, n\}$ then the stabiliser of $1$ is a subgroup of index $n$ in $\Gamma$. The second map is a section of the first, and it is easily seen that the preimage of a subgroup corresponds to the relabelings of $2, \ldots, n$, so there are $(n-1)!$ of them. Thus if we define
\[
t_n(\Gamma) = \card{\{\rho \in \Hom(\Gamma, \sym_n) : \mathrm{Im}(\rho) \text{ is transitive}\}}
\]
we have
\[
s_n(\Gamma) = \frac{t_n(\Gamma)}{(n-1)!}.
\]
Studying $t_n(\Gamma)$ directly is rather hard and usually one  instead considers the total number of permutation representations.
Thus let
\[
h_n(\Gamma) = \card{\Hom(\Gamma, \sym_n)}.
\]
In many cases the asymptotic behaviours of $h_n$ and $t_n$ are similar. Let us briefly consider the case of the free groups (which will be useful to us later). Let $\Gamma$ be freely generated by $a_1, \ldots, a_r$. An element in $\hom(\Gamma, \sym_n)$ is just a $r$-tuple of permutations, corresponding to the images of the generators. Thus $h_n(\Gamma) = (n!)^r$, and while it seems hard to directly count the transitive representations, the very fast growth of $h_n(\Gamma)$ together with the fact that an arbitrary representation decomposes into smaller transitive ones allows to prove that the proportion of non-transitive representations is $O(1/n)$ (see \cite[p. 40]{Lubotzky_Segal} or \cite{Dixon_generating} for a more precise result). In particular,  as $n\to\infty$:
\[
s_n(\Gamma) = t_n(\Gamma)/(n-1)! \sim h_n(\Gamma)/(n-1)! = n \cdot (n!)^{r-1}
\]
for the free group\footnote{Recall that we write $f(n)\sim g(n)$ as $n\to\infty$ for two functions $f,g:\NN\to\RR$ to indicate that $f(n)/g(n)\to 1$ as $n\to\infty$.}. When $\Gamma$ is not free one must count tuples of permutations with added constraints and this is much harder. A character-theoretical approach succeeded in getting an asymptotic equivalent for cofinite Fuchsian groups (groups acting properly discontinuously on the hyperbolic plane $\HH^2$ with a fundamental domain of finite volume). In \cite{MP1} T.~M\"uller and J.-C.~Schlage-Puchta deal with surface groups and in \cite{Liebeck_Shalev} M.~Liebeck and A.~Shalev deal with more general Fuchsian groups (free products of finite groups were studied earlier by M\"uller \cite{Mul}). In all these cases the final result takes the form
\[
s_n(\Gamma) \sim F_\Gamma(n) \cdot (n!)^{-\chi(\Gamma)}
\]
where $\chi(\Gamma) < 0$ is the Euler characteristic of the orbifold associated to the Fuchsian group and $1 \le F_\Gamma(n) \le O(C^{\sqrt n})$ for some $C > 1$, which together with Stirling's approximation implies in particular that:
\begin{equation} \label{fuchsian}
  \lim_{n \to +\infty} \frac{\log s_n(\Gamma)}{n\log(n)} = -\chi(\Gamma) = \frac{\vol(\Gamma \bs \HH^2)}{2\pi}. 
\end{equation}
A beautiful application of a slightly different version of this result is the counting of arithmetic surfaces by M.~Belolipetsky--T.~Gelander--A.~Lubotzky--A.~Shalev \cite{BGLS}.

The first step in establishing the asymptotic behaviour of $s_n$ for a given group is thus to study the growth of $s_n(\Gamma)$ at the scale $n^n$. In general we can ask whether the limit 
\begin{equation} \label{growth_rate}
  \lim_{n \to +\infty} \frac{\log s_n(\Gamma)}{n\log(n)} 
\end{equation}
exists, and try to compute it in terms of known quantities associated to $\Gamma$. An example-driven approach to this is given in \cite{Mueller_Puchta_examples}.

The more specific motivation for this paper was to study the limit \eqref{growth_rate} for higher-dimensional hyperbolic lattices, in particular in dimension 3. We will review in some detail what is known in general (following the work of I.~Agol \cite{Agol} and D.~Wise) in \ref{sec_hyp3folds} below, but let us say now that the picture is much wilder and there is no hope of a result as clean as this. In particular there can be no linear relation between covolume and subgroup growth as in \eqref{fuchsian} (see Section \ref{sec_hyp3folds}). We therefore limit our study here to the simplest (in the sense of group presentations) family of known hyperbolic lattices, that of Coxeter groups and more particularly right-angled ones (for which all relations are commutators). Even in this setting we do not reach a complete answer, but we do make a conjecture providing an explicit limit in terms of combinatorial invariants of the Coxeter graph. 

We also study right-angled Artin groups for which the problem is somewhat simpler and for which we can compute the limit in \eqref{growth_rate} in all cases. To finish this overview we mention \cite{ML} in which the subgroup growth of the fundamental groups of certain (non-hyperbolic) 3--manifolds is computed.


\subsection{Artin groups}

Let $\mathcal G$ be a finite graph: throughout this paper we will represent it by a set of vertices (which we will also denote by $\mathcal G$) together with a symmetric relation $\sim$ which signifies adjacency; to simplify matters we do not allow $v \sim v$ (thus our graphs are combinatorial graphs: they have no loops or mutliple edges). The {\em right-angled Artin group} associated to $\mathcal G$ is the group $\art(\mathcal G)$ given by the presentation
\[
\art(\mathcal G) = \langle \sigma_v : v \in \mathcal G, \sigma_v\sigma_w = \sigma_w\sigma_v \text{ if } v \sim w \rangle
\]
In particular, the complete graph on $r$ vertices corresponds to the free abelian group $\ZZ^r$, and a graph with no edges yields a free group.

To state our result we need to recall the definition of a well-known invariant of finite graphs. An {\em independent set of vertices} in $\mathcal G$ is a subset $S \subset \mathcal G$ such that no two distinct elements of $S$ are adjacent in $\mathcal G$. The {\em independence number} of the graph $\mathcal G$ is the maximal cardinality of such a subset. We will use the notation $\alpha(\mathcal G)$ for it. Our theorem is then stated as follows. 

\begin{theostar} \label{Main_artin}
  Let $\mathcal G$ be a finite graph and $\Gamma = \art(\mathcal G)$. Then:
  \begin{equation} \label{artin_subgroup}
    \lim_{n \to +\infty} \left(\frac{\log s_n(\Gamma)}{n\log n} \right) = \alpha(\mathcal G) - 1.
  \end{equation}
\end{theostar}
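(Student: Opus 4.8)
The plan is to establish matching upper and lower bounds on $\log s_n(\Gamma)/(n\log n)$.

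For the lower bound, I would exploit the fact that if $S \subset \mathcal G$ is a maximum independent set with $|S| = \alpha(\mathcal G)$, then the subgroup generated by $\{\sigma_v : v \in S\}$ is a free group $F_\alpha$ of rank $\alpha = \alpha(\mathcal G)$, and more importantly there is a retraction $\art(\mathcal G) \twoheadrightarrow F_\alpha$ obtained by killing all generators outside $S$ (this is well-defined precisely because $S$ is independent, so no relation of $\art(\mathcal G)$ is violated). Pulling back index-$n$ subgroups of $F_\alpha$ along this retraction gives an injection, so $s_n(\Gamma) \ge s_n(F_\alpha)$. By the free-group computation recalled in the introduction, $s_n(F_\alpha) \sim n\cdot(n!)^{\alpha - 1}$, and Stirling gives $\log s_n(F_\alpha)/(n\log n) \to \alpha - 1$. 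This yields the inequality $\liminf \ge \alpha(\mathcal G) - 1$.

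For the upper bound, I would count via $h_n(\Gamma) = |\Hom(\Gamma, \sym_n)|$, using $s_n(\Gamma) \le t_n(\Gamma)/(n-1)! \le h_n(\Gamma)/(n-1)!$. A homomorphism $\Gamma \to \sym_n$ is a tuple $(\rho(\sigma_v))_{v \in \mathcal G}$ of permutations subject only to the constraint that $\rho(\sigma_v)$ and $\rho(\sigma_w)$ commute whenever $v \sim w$. The key combinatorial input is that the centralizer of a fixed permutation in $\sym_n$ is small: on average (or with high probability under the uniform measure) a permutation has $O(1)$ fixed points and cycle structure making $|\cent_{\sym_n}(\tau)|$ of subexponential-in-$n\log n$ size — more precisely, one wants a bound of the shape $|\cent_{\sym_n}(\tau)| \le (n!)^{o(1)} \cdot$ (something), but the honest statement is that $\sum_{\tau \in \sym_n} |\cent_{\sym_n}(\tau)|^{k}$ grows like $(n!)$ up to lower-order factors for fixed $k$, since this sum counts tuples of commuting permutations. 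The cleanest route: choose a clique-edge-cover or, better, process the vertices of $\mathcal G$ in an order and bound the number of choices for each $\rho(\sigma_v)$ given the previous ones. If $v$ has a neighbor already assigned, $\rho(\sigma_v)$ lies in a centralizer; one shows the product of these centralizer sizes, summed appropriately, contributes only $(n!)^{\beta}$ where $\beta$ is the number of vertices NOT constrained this way — and the minimum over processing orders of this count of ``free'' vertices is exactly the independence number $\alpha(\mathcal G)$ (a maximum independent set is precisely a maximal collection of vertices that can all be chosen freely, i.e. a complement of a minimum vertex cover won't quite do — one needs that an independent set is where constraints between chosen-later and chosen-earlier vertices vanish). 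This gives $h_n(\Gamma) \le (n!)^{\alpha(\mathcal G)} \cdot (n!)^{o(1)}$, hence $\limsup \log s_n(\Gamma)/(n\log n) \le \alpha(\mathcal G) - 1$ after dividing by $(n-1)!$.

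The main obstacle is the upper bound, specifically making precise the claim that processing vertices in a good order costs only one factor of $n!$ per independent-set vertex and subexponential factors otherwise. The subtlety is that the centralizer of a permutation $\tau$ can be as large as $n!$ itself (when $\tau = \id$), so a crude ``centralizer is small'' estimate fails pointwise; one must instead sum over all configurations and use that the number of tuples of pairwise-commuting permutations indexed by a graph $\mathcal H$ is governed by $\alpha(\mathcal H)$ — essentially an induction reducing $\mathcal G$ by removing a vertex and its star, combined with the free-group bound as the base case and a union bound over the $O(n^n)^{\text{bounded}}$ many ``shapes'' of the already-chosen permutations. Carrying out this induction cleanly — identifying the right quantity to induct on and verifying that removing a vertex $v$ together with all of $\mathrm{star}(v)$ decreases the exponent by exactly the contribution of $v$ — is where the real work lies, and it is presumably why the authors isolate the independence number rather than, say, the clique cover number.
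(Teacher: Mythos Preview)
Your lower bound is exactly the paper's: retract onto the free group on a maximum independent set and pull back subgroups.

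Your upper-bound outline --- bound $h_n(\Gamma)$ and induct on $|\mathcal G|$ by removing a vertex together with its neighbourhood --- is also the paper's strategy, and you have correctly located the obstacle (centralisers can be as large as $n!$). What is missing is the concrete device that makes the induction go through. The paper fixes a vertex $v_0$ with neighbours $v_1,\ldots,v_d$ and, for each $\rho$, splits $\{1,\ldots,n\}$ according to the orbit sizes of $\langle\rho(\sigma_{v_1}),\ldots,\rho(\sigma_{v_d})\rangle$: a ``large'' part (orbits of size $>K=\lfloor\log n\rfloor$) and a ``small'' part (orbits $\le K$). On the large part the centraliser of this subgroup --- hence the number of choices for $\rho(\sigma_{v_0})$ --- really is at most $C^n$, so one removes only $v_0$ and applies the induction hypothesis to $\mathcal G\setminus\{v_0\}$. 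On the small part the neighbours themselves are highly constrained: for a fixed orbit profile $\pi$, the centraliser (which bounds the choices for $\rho(\sigma_{v_0})$) has size $\approx z_\pi$, while the number of neighbour-tuples with that profile is $\approx m!/z_\pi$ times a $C^{m\log\log n}$ factor, so the whole closed neighbourhood $N_1(v_0)$ contributes only one factor of $m!$; here one removes $N_1(v_0)$ and uses $\alpha(\mathcal G\setminus N_1(v_0))\le\alpha(\mathcal G)-1$. Neither reduction alone suffices: without the size threshold $K$, the product ``centraliser $\times$ neighbour-choices'' is not controlled, because the factor $\prod_i(i!)^{d\lambda_i}$ blows up when large orbits are allowed.

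A side remark: your greedy ``process vertices in order and count free vertices'' heuristic cannot be rescued by choosing the order cleverly. For any connected graph a BFS ordering has exactly one free vertex, which (if the argument worked) would give $h_n\le (n!)^{1+o(1)}$ --- already false for a path on three vertices. The failure is precisely the pointwise largeness of centralisers that you note; the resolution is the orbit-size dichotomy above, not an ordering argument.
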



\subsubsection{The lower bound}

There are two steps, of inequal difficulty, in the proof of Theorem \ref{Main_artin}: a lower bound and an upper bound on $s_n(\Gamma)$. The lower bound is
\begin{equation} \label{lb_artin}
  s_n(\Gamma) \ge (n + O(1)) \cdot (n!)^{\alpha(\mathcal G)-1} 
\end{equation}
and it is almost immediately proven from the case of the free group: let $S = \{v_1, \ldots, v_\alpha\}$ be a maximal independent set (that is $\card{S} = \alpha(\mathcal G)$), then the map on generators
\[
\sigma_{v_i} \mapsto a_i, \sigma_v,\, v \not\in S \mapsto \id
\]
defines a surjective morphism from $\Gamma$ to the free group $\Phi_\alpha$ on $a_1, \ldots, a_\alpha$. It follows that $t_n(\Gamma) \ge t_n(\Phi_\alpha)$ and we have seen that $t_n(\Phi_\alpha) \ge (n + O(1)) \cdot (n!)^\alpha$, so we get \eqref{lb_artin}. 


\subsubsection{The upper bound}

The upper bound is given in Proposition \ref{upper_bd_RAAG}. It is of the form
\begin{equation} \label{upper_bd_artin_intro}
s_n(\Gamma) \le C^{n\log\log n} \cdot (n!)^{\alpha(\mathcal G)-1}
\end{equation}
where $C$ depends (rather transparently, but we will not give a precise statement here) on $\mathcal G$, and the proof is much more involved. We use induction on the number of vertices to prove it. To carry out the induction step one fixes a vertex $v \in \mathcal G$.  Ideally, for $\rho \in \hom(\Gamma, \sym_n)$, whenever $\rho(\sigma_v)$ has few fixed points every $\rho(\sigma_w)$ for $w \sim v$ must have many small orbits and this drastically limits their number; while when $\rho(\sigma_v)$ has many fixed points we can forget it and use the induction hypothesis on the remaining vertices. In practice this vague idea is in default, and we apply a scheme reminescent of it by separating the count according to the size of the orbits of the group $\langle \rho(\sigma_w): w \sim v \rangle$: the three steps (separating, counting with large orbits and counting with small orbits) are given in Lemmas \ref{dec_small_large_lem}, \ref{large_lem} and \ref{small_orbits}.


\subsubsection{Sharper upper bounds}

There are cases where we get both a much sharper upper bound and a much simpler proof. For example, if $\mathcal G$ is a $2r$-gon then removing one edge every two we get a surjection 
\[
(\ZZ^2)^r \to \art(\mathcal G) 
\]
and since the number of commuting pairs of permutations equals exactly $\Pi(n) \cdot n!$ (where $\Pi(n)$ counts the number of partitions of the integer $n$) we get that $s_n(\art(\mathcal G)) \le n\Pi(n)^r\cdot (n!)^{r-1}$, which is much smaller than the bound given by \eqref{upper_bd_artin_intro} since $\Pi(n) = O(C^{\sqrt n})$. This trick can in fact be applied to all bipartite graphs to get an upper bound of the same order, which we do in Proposition \ref{artin_bipartite}. It does not work in general, for example for a $(2r+1)$-gon it cannot give better than an upper bound $n\Pi(n)^r\cdot (n!)^r$ which is off by a factor of $n!$.


\subsubsection{A short survey on independence numbers of graphs}

First we note that the independence number has some geometric significance with respect to right-angled Artin groups: the group $\art(\mathcal G)$ is naturally the fundamental group of a cube complex,  called the Salvetti complex, and $\alpha$ counts the maximal number of disjoint hyperplanes in this complex (see for instance \cite[Example 5.2]{Vogtmann}).

The independence number is rather hard to compute in general: the fastest known algorithms have exponential complexity (in the number of vertices). The problem of finding a maximum independent set is equivalent to finding a maximum clique in the complement graph, and a survey of the algorithmic aspects of the latter is given in \cite{survey_maxclique}. For certain classes of sparse graphs such as graphs of bounded degree and planar graphs na\"ive algorithms for the clique problem work in linear time; thus, the computation of the independence number is fast for very dense graphs. 

For various models of random graphs the behaviour of the independence number is rather well-understood. In the Erd\"os--R\'enyi model $\mathcal G_{r, p}$ there is the following result of A. Frieze \cite{Frieze}. Let $\eps > 0$ be fixed, then 
\[
\left| \alpha(\mathcal{G}_{r,p}) - \frac 2 p (\log(rp) - \log\log(rp) - \log 2 + 1) \right| \leq \dfrac{\eps}{p}
\]
with probability going to 1 as $N \to \infty$, if $d_\epsilon/N \leq p = p(r) = o(1)$ for some fixed sufficiently large constant $d_\eps$. For regular random graphs there are upper and lower bounds which are linear in the number of vertices. Tables for the best ones to date are given in \cite{Duckworth_Zito}. In particular for a random trivalent graph $\mathcal G_{r, 3}$ on $r$ vertices we have, almost surely as $r\to +\infty$: 
\[
0.43 \le \frac{\alpha(\mathcal G_{r, 3})} r \le 0.46
\]
and (older) experimental data suggests there might exist $\alpha \cong 0.439$ such that  $\alpha(\mathcal G_{r, 3}) / r  = \alpha + o(1)$ almost surely as $r\to\infty$ \cite{McKay}.


\subsection{Coxeter groups}

Let $\mathcal G$ be a graph. The {\em right-angled Coxeter group} $\cox(\mathcal G)$ associated to $\mathcal G$ is obtained from the Artin group by adding the condition that every generator be an involution. Namely : 
\[
\cox(\mathcal G) = \langle \sigma_v : v \in \mathcal G, \sigma_v^2 = \id, \sigma_v\sigma_w = \sigma_w\sigma_v \text{ if } v \sim w \rangle. 
\]
We will comment on the geometric significance of this presentation below, for now let us state our conjecture on subgroup growth. For this we need a new invariant $\gamma$ of a graph. Recall that a {\em clique} in $\mathcal G$ is a subset of vertices, any two of which are adjacent to each other (in other words, a complete induced subgraph). We will call an induced subgraph $\mathcal C$ of $\mathcal G$ a {\em clique collection} if each connected component of $\mathcal C$ is a complete graph.

If $\mathcal C_1, \ldots, \mathcal C_q$ are the connected components of a clique collection $\mathcal C$ then we put
\[
w(\mathcal C) = \sum_{i=1}^q \left( 1 - \frac 1 {2^{|\mathcal C_i|}} \right)
\]
and we define $\gamma(\mathcal G)$ as the maximum for $w$:
\begin{equation} \label{def_gamma}
  \gamma(\mathcal G) = \max_{\mathcal C} w(\mathcal C)
\end{equation}
where the maximum is taken over all clique collections in $\mathcal G$. Figure \ref{icosahedron} shows two different collections realising $\gamma(\mathcal G) = 7/4$ for the icosahedral graph. 

\begin{figure}[h] 
  \caption{Two clique collections realising $\gamma$}
  \centering
  \begin{subfigure}{.4\textwidth}
    \includegraphics[width=\textwidth]{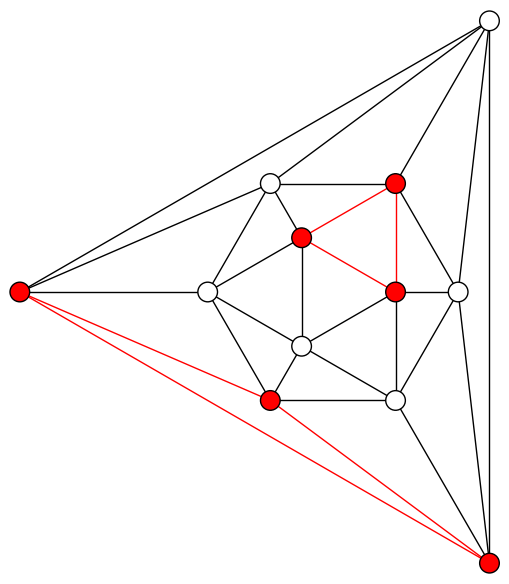}
  \end{subfigure}
  \begin{subfigure}{.4\textwidth}
    \includegraphics[width=\textwidth]{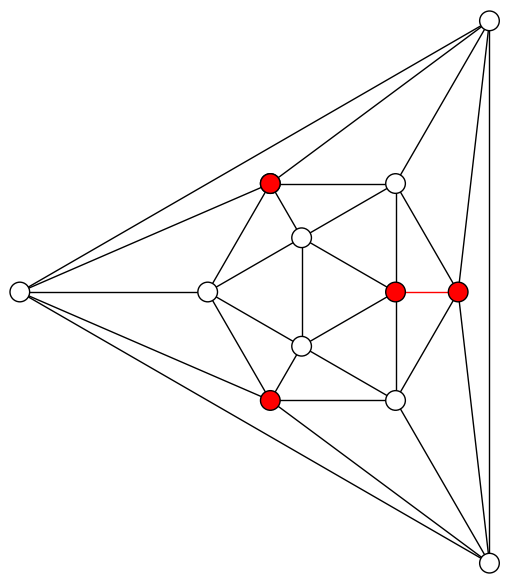}
  \end{subfigure}
  \label{icosahedron}
\end{figure}

Then we conjecture the following. 

\begin{conj} \label{conjecture_cox}
  Let $\mathcal G$ be a finite graph and $\Gamma = \cox(\mathcal G)$. Assume that $\mathcal G$ is not a complete graph (so $\Gamma$ is infinite), then we have : 
  \begin{equation} \label{conjecture_cox_subgroup}
    \lim_{n \to +\infty} \left(\frac{\log s_n(\Gamma)}{n\log n} \right) = \gamma(\mathcal G) - 1. 
  \end{equation}
\end{conj}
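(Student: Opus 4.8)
The plan is the familiar split into a lower and an upper bound for $s_n(\Gamma)$, via $s_n(\Gamma)=t_n(\Gamma)/(n-1)!$; since $t_n\le h_n$ and $\log(n-1)!\sim n\log n$, the upper bound in \eqref{conjecture_cox_subgroup} amounts to showing $\limsup_n \log h_n(\cox(\mathcal G))/(n\log n)\le\gamma(\mathcal G)$, i.e.\ to counting tuples $(\tau_v)_{v\in\mathcal G}$ of involutions of $\{1,\dots,n\}$ with $\tau_v\tau_w=\tau_w\tau_v$ whenever $v\sim w$. The lower bound works for every graph and is, again, produced by good quotients: for any induced subgraph $\mathcal C\subseteq\mathcal G$ the map $\sigma_v\mapsto\sigma_v$ for $v\in\mathcal C$ and $\sigma_v\mapsto\id$ otherwise is a retraction $\cox(\mathcal G)\to\cox(\mathcal C)$, so $s_n(\cox(\mathcal G))\ge s_n(\cox(\mathcal C))$. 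When $\mathcal C$ is a clique collection with components $\mathcal C_1,\dots,\mathcal C_q$ one has $\cox(\mathcal C)=(\ZZ/2\ZZ)^{|\mathcal C_1|}*\cdots*(\ZZ/2\ZZ)^{|\mathcal C_q|}$, a virtually free group with rational Euler characteristic $\sum_i 2^{-|\mathcal C_i|}-(q-1)=1-w(\mathcal C)$, and non-elementary as soon as $w(\mathcal C)>1$; then M\"uller's analysis of the subgroup growth of free products of finite groups (\cite{Mul}, see also \cite{Lubotzky_Segal}) gives $\log s_n(\cox(\mathcal C))\sim (w(\mathcal C)-1)\,n\log n$. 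Taking the supremum over clique collections, and using $s_n\ge 1$ when $\gamma(\mathcal G)=1$, yields $\liminf_n \log s_n(\Gamma)/(n\log n)\ge\gamma(\mathcal G)-1$. This is exactly parallel to \eqref{lb_artin}: $\gamma(\mathcal G)-1$ is the largest value of $-\chi$ among free-product retracts of $\Gamma$, and it is $\ge 0$ because a non-complete $\mathcal G$ carries an infinite dihedral retract.

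For the upper bound I would imitate Proposition~\ref{upper_bd_RAAG}: induct on the number of vertices, fix $v\in\mathcal G$, and partition $\Hom(\cox(\mathcal G),\sym_n)$ according to the orbit sizes of the link group $\Lambda_v=\langle\tau_w:w\sim v\rangle$ together with the number of fixed points of $\tau_v$, mirroring Lemmas~\ref{dec_small_large_lem}, \ref{large_lem} and \ref{small_orbits}. Configurations in which $\Lambda_v$ has a large orbit are bounded by free-group--type counts on that orbit; configurations in which all orbits of $\Lambda_v$ are small are treated by forgetting $v$ and invoking the inductive bound for $\cox(\mathcal G\setminus v)$, provided one shows that reinstating $\tau_v$ — an involution forced to centralise $\Lambda_v$ — costs no more than $(n!)^{\gamma(\mathcal G)-\gamma(\mathcal G\setminus v)+o(1)}$.

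The hard part, and the reason I expect only a conditional result, is twofold. First, $\gamma(\cdot)$ is far less tractable than $\alpha(\cdot)$ under vertex deletion: removing a vertex of a large clique barely changes it, removing an ``independent'' vertex can lower it by up to $1/2$, so tracking the exponent through the induction is delicate. Second, and more fundamentally, torsion breaks the key mechanism: in the torsion-free case a permutation with few fixed points can be arranged to have a \emph{small} centraliser — which is what really confines the images of the neighbouring generators — whereas an involution with few fixed points still has a large, hyperoctahedral centraliser of size $\approx (n!)^{1/2}$, so the confinement fails and any gain must come from the mutual commutations among the $\tau_w$ themselves. I therefore expect to settle for a restricted family of graphs. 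The clean case is $\mathcal G$ a disjoint union of cliques: then $\cox(\mathcal G)$ is a free product of finite $2$-groups, $\gamma(\mathcal G)=w(\mathcal G)$, and \cite{Mul} gives \eqref{conjecture_cox_subgroup} on the nose. One can enlarge this by stable operations — for instance coning, since adjoining a vertex adjacent to all others multiplies $\cox(\mathcal G)$ by a central $\ZZ/2\ZZ$ and leaves $\gamma$ unchanged for non-complete $\mathcal G$ (one must still verify a finite central direct factor does not move the growth rate) — and by handling low-complexity or tree-like graphs directly. The estimate for ``$\tau_v$ with few fixed points, $\Lambda_v$ with moderately large orbits'' remains the obstacle to the conjecture in full.
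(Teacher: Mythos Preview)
This statement is a conjecture, and the paper does not prove it in full; it proves the lower bound for all graphs and the upper bound only for the restricted class of graphs in Theorem~\ref{Main_coxeter}. Your proposal correctly reflects this state of affairs: you give a complete argument for the lower bound and an honest sketch of where the upper bound gets stuck.

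Your lower bound is exactly the paper's (Section~1.3.1 and \eqref{2-gps}): retract onto $\cox(\mathcal C)$ for a maximising clique collection $\mathcal C$, identify it as a free product of finite $2$-groups, and invoke M\"uller. Nothing to add.

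For the upper bound your diagnosis of the obstruction is accurate --- the hyperoctahedral centraliser of an involution with few fixed points is what breaks the RAAG argument --- but the remedy you sketch (plain induction on $|\mathcal G|$ with the small/large orbit split of Lemmas~\ref{dec_small_large_lem}--\ref{small_orbits}, plus ad hoc stable operations like coning) is not what the paper does, and by itself would not reach even the class of graphs in Theorem~\ref{Main_coxeter}. The paper's key technical device is to strengthen the inductive statement: instead of bounding $h_n(\cox(\mathcal G))$ it bounds $h_n^{\underline\pi}(\cox(\mathcal G))$, the number of representations in which prescribed leaf generators are forced to preserve prescribed partitions $\underline\pi\in\mathcal P_{\le 2}(n)^S$, and shows this costs an explicit power of $n!$ depending on the $\pi_{v,2}$ (Proposition~\ref{coxeter_constraints}). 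This ``constraint tracking'' is exactly what converts the information that $\tau_v$ has few fixed points into a usable reduction on the count for the neighbours, compensating for the large centraliser. The base cases already require outside input: the three-vertex line (Lemma~\ref{3vert}) is handled by embedding into non-right-angled pentagonal Coxeter groups and appealing to Liebeck--Shalev. From there the paper builds up through trees (Proposition~\ref{tree_constraints}), then trees with extra vertices glued to leaves (Proposition~\ref{gluvert}), then stars glued on (Section~\ref{glustar}), each time carrying the constraints. Your coning observation and the free-product-of-cliques case are correct but orthogonal to this machinery.
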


We note that the limit inferior in this conjecture holds for all finite graphs, and is proven similarly to the case of Artin group explained above (we will explain this below). Proving that the upper bound holds (if it does) in general seems much harder than in the case of Artin groups; on the other hand it is not hard to construct examples where simple tricks can be used to demonstrate (see \ref{simple_general_const} for a large family of examples, and \ref{sec_octahedron} for a geometric example). We also give a result, whose proof is much harder than the constructions above, establishing the upper bound for $s_n(\cox(\mathcal G))$ under strong structural constraints on the graph $\mathcal G$. It includes the case of trees, for which we also give a simpler proof (see Proposition \ref{trees_directly}). In the theorem below (and in the rest of the paper), $N_1(v)$ denotes the set of neighbours of a vertex $v\in\mathcal G$. The results is as follows. 

\begin{theostar} \label{Main_coxeter}
  Let $\mathcal G$ be a finite graph such that there exist vertices $v_0, \ldots v_l$ and $w_1, \ldots, w_s$ in $\mathcal G$ such that the 1-neighbourhood of each $v_i$ is a tree, as is the 2-neighbourhood of each $w_j$, these neighbourhoods are pairwise disjoint, and the graph
  \[
  \mathcal G \setminus \left( \{v_1, \ldots, v_l \} \cup \bigcup_{j=1}^s N_1(w_j) \right)
  \]
  is a union of trees. Then Conjecture \ref{conjecture_cox} holds for $\mathcal G$. 
\end{theostar}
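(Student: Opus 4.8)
The plan is to establish the upper bound matching the lower bound \eqref{conjecture_cox_subgroup}, i.e.\ $s_n(\cox(\mathcal G)) \le C^{n\log\log n}\cdot (n!)^{\gamma(\mathcal G)-1}$, since the liminf is already known. The strategy mirrors the Artin case (Lemmas \ref{dec_small_large_lem}, \ref{large_lem}, \ref{small_orbits}): induct on the number of vertices, at each stage peeling off a vertex and separating the count of $\rho\in\Hom(\cox(\mathcal G),\sym_n)$ according to whether the subgroup generated by the images of a chosen vertex's neighbours has large or small orbits. The crucial bookkeeping device is that when a generator $\sigma_v$ is removed from $\cox(\mathcal G)$, the effect on $\gamma$ must be controlled: roughly, if the $1$-neighbourhood of $v$ is a tree then deleting $v$ drops $\gamma$ by at most the contribution of a single cluster, namely $1 - 2^{-2} = 3/4$ when $v$ sits in an edge, or by less when $v$ is isolated in the current graph; this is the combinatorial input that makes the three-term hierarchy in the hypothesis (vertices $v_i$, neighbourhoods $N_1(w_j)$, and the residual forest) line up exactly with the numerical budget. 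First I would prove a purely graph-theoretic lemma computing $\gamma$ for forests and more generally estimating how $\gamma(\mathcal G)$ behaves under deletion of a vertex whose $1$-neighbourhood is a tree, and under deletion of $N_1(w)$ when the $2$-neighbourhood of $w$ is a tree.

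Next I would treat the base case. The graph obtained after removing all the $v_i$ and all the $N_1(w_j)$ is a union of trees, so the theorem reduces to the statement that Conjecture \ref{conjecture_cox} holds for forests; this is Proposition \ref{trees_directly}, which I may simply cite. For a tree $T$ one has $\gamma(T)$ realised by a maximum matching together with leftover isolated vertices, and the direct argument counts homomorphisms by processing the tree from the leaves inward: each involution $\rho(\sigma_v)$ contributes a factor bounded by the number of involutions in $\sym_n$, which is $O(C^{\sqrt n})\cdot (n!)^{1/2}$, while each commuting-pair constraint along an edge cuts the count by the factor governing $1-2^{-2}=3/4$ versus $1/2+1/2$; summing the exponents along a maximum matching of $T$ gives precisely $\gamma(T)-1$.

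For the induction step with a vertex $v=v_i$ whose $1$-neighbourhood in the current graph is a tree: fix $v$ and use Lemma \ref{dec_small_large_lem} to split $\Hom(\cox(\mathcal G),\sym_n)$ according to the orbit structure of $H=\langle \rho(\sigma_w): w\sim v\rangle$. On the ``small orbits'' side, Lemma \ref{small_orbits} forces $H$ to have many orbits, hence many constraints, and one checks the exponent saved is at least the $\gamma$-contribution of the cluster containing $v$ — here the tree hypothesis on $N_1(v)$ is what prevents the neighbours from ``sharing'' clusters and overcounting. On the ``large orbits'' side, Lemma \ref{large_lem} bounds the number of such configurations and one forgets $\sigma_v$, applying the induction hypothesis to $\cox(\mathcal G\setminus\{v\})$, whose $\gamma$ has dropped by exactly the right amount by the graph lemma. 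The vertices $w_j$ are handled by the same mechanism applied to the whole star $N_1(w_j)$ at once (the $2$-neighbourhood being a tree is what makes the simultaneous peeling of a whole neighbourhood behave like a single inductive move). Throughout, the $C^{n\log\log n}$ slack absorbs the polynomially-many (in fact $O(\log n)$-many) orbit-size thresholds one sums over, exactly as in the Artin argument.

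The main obstacle I expect is the combinatorial lemma controlling the decrement of $\gamma$ under vertex/neighbourhood deletion, together with verifying that the ``small orbits'' estimate in the Coxeter setting really does save the full cluster contribution rather than merely the $1/2$ coming from a single involution: an involution in $\sym_n$ already has $\sqrt{n!}$-type count, so naively removing one vertex only saves a factor $\sqrt{n!}$, whereas $\gamma-1$ demands we save up to $3/4\log(n!)$ per edge-cluster. Reconciling these requires that when $\rho(\sigma_v)$ has few fixed points, the commuting neighbour $\rho(\sigma_w)$ is pinned down much more tightly than a generic involution — this is the place where the argument is genuinely delicate and where the restrictive hypotheses on $\mathcal G$ (tree neighbourhoods, disjointness, residual forest) are doing real work, ruling out the configurations where several overlapping constraints would otherwise be needed to close the induction.
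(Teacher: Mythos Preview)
Your high-level architecture (induction on vertices, small/large orbit splitting, forests as a base case) matches the paper's, and you have correctly located the essential difficulty: removing a single involution only saves a factor of roughly $(n!)^{1/2}$, while closing the induction requires saving up to $(n!)^{3/4}$ per edge-cluster. But your proposal stops at identifying this gap rather than bridging it, and the mechanism you gesture at (``the commuting neighbour is pinned down more tightly'') is not by itself enough to carry an induction.

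The paper's solution is to \emph{strengthen the inductive hypothesis}: instead of bounding $h_n(\cox(\mathcal G))$, one bounds a constrained count
\[
h_n^{\underline\pi}(\cox(\mathcal G)) = \#\{\rho : \rho(\sigma_v)\in\stab(\pi_v)\text{ for each } v\in S\}
\]
for a set $S$ of leaves and partitions $\underline\pi\in\mathcal P_{\le 2}(n)^S$, with the quantitative bound
\[
h_n^{\underline\pi}(\cox(\mathcal G)) \le C^{n\log\log n}\,(n!)^{\gamma(\mathcal G)-\frac{1}{2n}\sum_{v\in S}\pi_{v,2}}.
\]
This is Proposition~\ref{coxeter_constraints}. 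The point is that when you delete a vertex $v$, the cycle partition of $\rho(\sigma_v)$ becomes a constraint on its neighbour $w$ in the smaller graph, and the explicit $\pi_{v,2}$-dependence is exactly what recovers the missing $1/4$ in the exponent. Without formulating and proving this refined statement, the naive induction on $h_n$ alone does not close. A second ingredient you are missing is that the three-vertex base case (a path $v_1\!-\!v_3\!-\!v_2$ with constraints on both leaves, Lemma~\ref{3vert}) is already nontrivial: the paper obtains it by comparison with the pentagonal Coxeter group and invokes the Liebeck--Shalev asymptotic \eqref{fuchsian} for Fuchsian groups. Your sketch treats the base case as settled by Proposition~\ref{trees_directly}, but that proposition only handles the \emph{unconstrained} count for trees, which is not what the induction needs.
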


In practical terms the graphs to which this theorem applies can be constructed as follows: take a tree $\mathcal G_0$ and choose disjoint sets $S_1, \ldots, S_l$ and $T_1, \ldots, T_s$ of leaves of $\mathcal G_0$. Add vertices $v_1, \ldots, v_l$ such that $v_i$ is adjacent to every vertex in $S_i$, and star graphs $\mathcal S_1,  \ldots, \mathcal S_s$ such that every vertex in $T_j$ is adjacent to exactly one leaf of $\mathcal S_j$. Figure \ref{example_thm} pictures two examples of such graphs (with the $v_i$s marked red and the $w_j$s and neighbours green). 

\begin{figure}[h] 
  \caption{Two graphs to which Theorem \ref{Main_coxeter} applies}
  \label{example_thm}
  \centering
  \begin{subfigure}{.45\textwidth}
    \includegraphics[width=\textwidth]{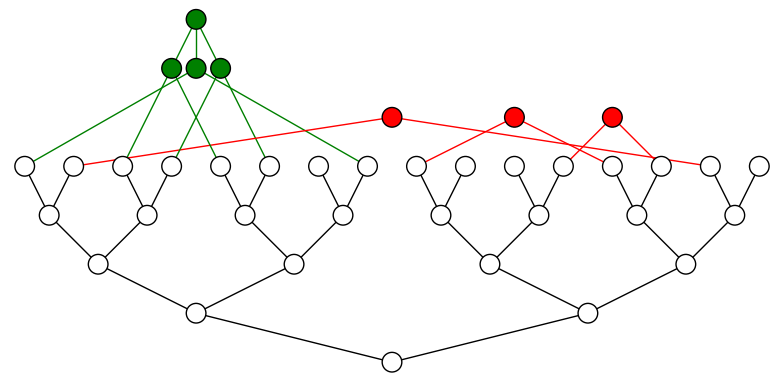}
  \end{subfigure}
  \begin{subfigure}{.45\textwidth}
    \includegraphics[width=\textwidth]{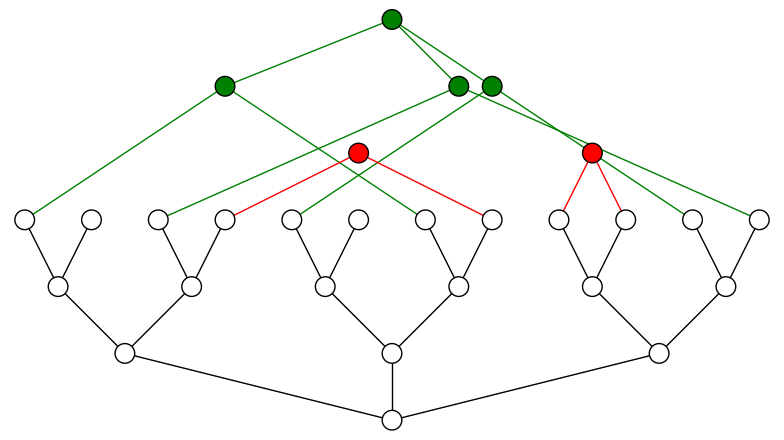}
  \end{subfigure}
\end{figure}

Many graphs do not satisfy the hypotheses of this theorem: for example, a simple obstruction these hypotheses is to have a triangle. There are no 3--dimensional hyperbolic right-angled polyhedra of finite volume whose dual graph does; however we give an elementary proof of the conjecture for a simple example (the ideal octahedron) in \ref{sec_octahedron} below. We note that our result recovers a crude form of Liebeck and Shalev's asymptotic \eqref{fuchsian} for reflection groups associated to right-angled hyperbolic polytopes (see section \ref{Fuchsian_RACG} below); however we actually make use of a particular case of it in our proof. 


\subsubsection{The lower bound}

Let $\mathcal C = \bigcup_{i=1}^q$ be a clique collection realising $\gamma(\mathcal G)$. Since $\mathcal C$ is an induced subgraph in $\mathcal G$ we get a surjection 
\[
\cox(\mathcal G) \to \cox(\mathcal C) \cong (\ZZ/2\ZZ)^{|\mathcal C_1|} * \ldots * (\ZZ/2\ZZ)^{|\mathcal C_q|},   
\]
where for groups $G,H$ the group $G*H$ denotes their free product. Now as a particular case of M\"uller's result on free products \cite{Mul} (see also \cite[Theorem 14.2.3]{Lubotzky_Segal}, and we give an elementary proof in \ref{prp_cpltegraphsmeth3} below) we get that
\begin{equation} \label{2-gps}
s_n(\cox(\mathcal C)) \sim F(n)\cdot (n!)^{\sum_{i=1}^q \left(1 - \frac 1{2^{|\mathcal C_i|}} \right) - 1}
\end{equation}
where $1 \le F(n) \le C^{\sqrt n}$. 


\subsubsection{The upper bound}

For the upper bound we would like to proceed in a similar manner as we did for Artin groups. However a major complication arises, which we were not able to overcome without the additional hypotheses on $\mathcal G$ in Theorem \ref{Main_coxeter}. The problem is that when removing a neighbourhood of a vertex $\gamma$ can decrease by 1/2 instead of 1 and this forbids a naive application of the inductive hypothesis. Instead we have to study the representations of the group on the smaller graph with constraints coming from the removed vertices, which we were only able to carry to term for the graphs satisfying the hypotheses in the theorem (see Proposition \ref{coxeter_constraints}). It would be easy to somewhat artificially include more graphs in our statement without complicating much the arguments in the proof (for example we might add new vertices adjacent to the free leaves of the glued star graphs) but we will not do so here as it represents little progress towards Conjecture \ref{conjecture_cox} and we are not aware of further applications. 

We also note that the argument we used for bipartite graphs in the Artin case completely breaks down for Coxeter groups. It gives bounds which are better than the trivial bound $h_n(\cox(\mathcal G)) \le n^{|\mathcal G| n/2}$ but in the general case these upper bounds are still far away from the lower bound.


\subsection{Layout of the paper}

The further sections in this paper are logically independent from each other. We begin by discussing various examples of Coxeter groups in relation with Conjecture \ref{conjecture_cox}, in Section \ref{examples}. We also discuss Fuchsian groups, and funndamental groups of hyperbolic 3--manifolds, at the end of this section. In section \ref{direct} we give simpler proofs for some particular cases of Theorems \ref{Main_artin} and \ref{Main_coxeter}. The proof of the former is given in Section \ref{sec_RAAG} and that of the latter in Section \ref{sec_RACG}. The appendix \ref{appendix} contains some results on symmetric groups which we use in these proofs, regrouped here for the reader's convenience.


\section{Examples} \label{examples}

\subsection{Subgroup growth gap for right-angled groups}

In general the subgroup growth type can be essentially any growth type (see \cite[Chapter 13]{Lubotzky_Segal}. Restricting to linear groups, a gap in the growth between polynomial and $n^{\log(n)/\log\log(n)}$ appears (loc. cit., Chapter 8). For right-angled Coxeter groups we observe that the gap is much wider. 

\begin{prp} \label{gap}
  Let $\Gamma$ be a right-angled Coxeter or Artin group. We have
  \[
  \liminf_{n \to +\infty}\frac{\log s_n(\Gamma)}{n\log n} = 0
  \]
  if and only if $\Gamma$ is virtually abelian. In particular the subgroup growth is either polynomial or of type $n^n$. 
\end{prp}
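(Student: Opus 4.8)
The plan is to characterize the dichotomy through the two combinatorial invariants $\alpha$ and $\gamma$, using Theorem~\ref{Main_artin} and the lower bound \eqref{2-gps} for the Coxeter case together with the already-stated fact that the $\liminf$-version of Conjecture~\ref{conjecture_cox} holds for all finite graphs. First I would dispose of the easy direction: if $\Gamma$ is virtually abelian, then it has a finite-index subgroup isomorphic to $\ZZ^k$ for some $k$, and since $s_n(\ZZ^k)$ grows polynomially in $n$ (it is a multiplicative function with $s_{p}(\ZZ^k)$ of polynomial size, cf.\ \cite[Chapter~15]{Lubotzky_Segal}), a standard argument bounding $s_n(\Gamma)$ by a polynomial-in-$n$ multiple of $\max_{d \mid n} s_{n/d}(\Delta)$ over finite-index $\Delta \le \Gamma$ shows $s_n(\Gamma)$ is polynomially bounded, hence the $\liminf$ (in fact the $\limsup$) is $0$.

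For the converse I would argue contrapositively: if $\Gamma = \cox(\mathcal G)$ or $\art(\mathcal G)$ is \emph{not} virtually abelian, I want to produce a free or free-product quotient forcing the $\liminf$ to be strictly positive. In the Artin case, $\art(\mathcal G)$ is virtually abelian if and only if $\mathcal G$ is a complete graph (equivalently $\alpha(\mathcal G) = 1$): if $\mathcal G$ is not complete there are two non-adjacent vertices, so $\alpha(\mathcal G) \ge 2$, and Theorem~\ref{Main_artin} gives that the limit \eqref{growth_rate} equals $\alpha(\mathcal G) - 1 \ge 1 > 0$. In the Coxeter case, $\cox(\mathcal G)$ is virtually abelian precisely when $\mathcal G$ is complete (then $\cox(\mathcal G) = (\ZZ/2\ZZ)^{|\mathcal G|}$ is finite) --- I should check there is no other virtually abelian right-angled Coxeter group, which follows because a non-complete $\mathcal G$ contains two non-adjacent vertices $v, w$, and the clique collection $\mathcal C = \{v, w\}$ (two isolated vertices) has $w(\mathcal C) = 2 \cdot (1 - 1/2) = 1$, so $\gamma(\mathcal G) \ge 1$; then the surjection $\cox(\mathcal G) \to \cox(\mathcal C) \cong (\ZZ/2\ZZ) * (\ZZ/2\ZZ)$, which is infinite dihedral and hence virtually $\ZZ$ --- wait, that only gives $\liminf = 0$. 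So I must take $\gamma$ slightly larger: since $\mathcal G$ is non-complete and (for the infiniteness hypothesis) connected reasoning does not suffice, I instead observe that any non-complete graph has $\gamma(\mathcal G) \ge 3/2$ unless $\mathcal G$ is a disjoint union of at most\ldots{} — here I need to be careful. The clean statement is: $\gamma(\mathcal G) = 1$ forces $\mathcal G$ to be complete. Indeed if $\mathcal G$ is not complete, pick non-adjacent $v,w$; if some third vertex $u$ is adjacent to $v$, the collection $\{u,v\} \sqcup \{w\}$ gives $w = (1 - 1/4) + (1 - 1/2) = 5/4 > 1$; if no vertex outside $\{v,w\}$ is adjacent to either, but $\mathcal G$ has $\ge 3$ vertices then a third isolated-in-its-component vertex $u$ gives three components and $w \ge 3/2$; and if $\mathcal G = \{v, w\}$ with no edge then $\cox(\mathcal G)$ is infinite dihedral, which \emph{is} virtually abelian, so this case is correctly excluded. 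Combining with the all-graphs lower bound for \eqref{conjecture_cox_subgroup} (namely $\liminf \ge \gamma(\mathcal G) - 1$), we get $\liminf \ge 1/4 > 0$ whenever $\cox(\mathcal G)$ is infinite and not virtually abelian.

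The final sentence (``polynomial or of type $n^n$'') then follows formally: by Theorem~\ref{Main_artin} the Artin limit is the integer $\alpha(\mathcal G)-1 \ge 1$, so $s_n \ge (n!)^{1-o(1)} = n^{n(1-o(1))}$; for Coxeter groups the lower bound \eqref{2-gps} combined with the surjection onto a free product realizing a clique collection with $w(\mathcal C) \ge 5/4$ gives $s_n(\Gamma) \ge s_n(\cox(\mathcal C)) \ge (n!)^{1/4 - o(1)}$, which is again of type $n^{cn}$ for $c>0$; and together with the trivial upper bound $s_n(\Gamma) \le h_n(\Gamma) \le (n!)^{|\mathcal G|}$ this pins the growth to type $n^n$ in the non-virtually-abelian case. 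The main obstacle I anticipate is purely bookkeeping: correctly enumerating the small cases that determine exactly when $\gamma(\mathcal G) = 1$ (respectively $\alpha(\mathcal G)=1$) versus when $\Gamma$ fails to be virtually abelian, and in particular making sure the infinite-dihedral case ($\mathcal G$ an edgeless graph on two vertices) is handled consistently on both sides of the equivalence --- there the group \emph{is} virtually $\ZZ$, virtually abelian, and indeed $s_n$ grows polynomially, so the statement is consistent, but one must phrase $\gamma$-estimates to respect this.
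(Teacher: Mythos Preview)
Your Artin argument is fine (if heavier than needed: the paper just uses the surjection onto a rank-$2$ free group rather than invoking Theorem~\ref{Main_artin}).

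The Coxeter argument has a genuine gap. Your claimed characterisation ``$\cox(\mathcal G)$ is virtually abelian precisely when $\mathcal G$ is complete'' is false, and so is the reformulation ``$\gamma(\mathcal G)=1$ forces $\mathcal G$ complete''. The $4$-cycle $\mathcal P_4$ is a counterexample to both: $\cox(\mathcal P_4)\cong D_\infty\times D_\infty$ is virtually $\ZZ^2$, and one checks $\gamma(\mathcal P_4)=1$ (any clique collection with two components contains two non-adjacent vertices, hence is two singletons). More generally the complete multipartite graphs $K_{2,\ldots,2}$ give $\cox\cong D_\infty^k$, all virtually abelian with $\gamma=1$.

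The hole in your case analysis is precisely the case you skipped: when you pick non-adjacent $v,w$ and a third vertex $u$ adjacent to $v$, you form the ``clique collection'' $\{u,v\}\sqcup\{w\}$, but this is only a clique collection if $u\not\sim w$; if $u$ is adjacent to \emph{both} $v$ and $w$ the induced subgraph on $\{u,v,w\}$ is a path, not a disjoint union of cliques. When \emph{every} third vertex is adjacent to both $v$ and $w$ one gets $\cox(\mathcal G)\cong D_\infty\times\cox(\mathcal G\setminus\{v,w\})$, and this is exactly the inductive situation the paper exploits to show such groups are virtually abelian. The paper's dichotomy for Coxeter groups is: either there are three pairwise non-adjacent vertices (surject onto $(\ZZ/2)^{*3}$), or there is an edge $uv$ and a vertex $w$ adjacent to neither (surject onto $(\ZZ/2)^2*\ZZ/2$), or else every pair of non-adjacent vertices has all other vertices adjacent to both, forcing the $D_\infty\times(\cdot)$ splitting. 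Your approach via $\gamma>1$ can be salvaged, but only after you have this correct characterisation in hand, at which point the clique collections $\{v_1\}\sqcup\{v_2\}\sqcup\{v_3\}$ (weight $3/2$) and $\{u,v\}\sqcup\{w\}$ (weight $5/4$) do the job.
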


For the representation growth $h_n$ it follows that either $h_n(\Gamma) \ge (n!)^{1+\delta}$ for some $\delta > 0$, or $h_n(\Gamma) \le C^n n!$ for some $C$ in the case $\Gamma$ is virtually abelian (this follows easily from the fact that $t_n(\Gamma) \le n^A n!$, and the recursive formula \cite[Lemma 1.1.3]{Lubotzky_Segal}). In the companion paper \cite{virtabelian} we study the finer growth of $h_n$ and $s_n$ for some examples of virtually abelian Coxeter groups. 

\begin{proof}
  We deal with Artin and Coxeter groups separately. For Artin groups we first note that as soon as there are two non-adjacent vertices, i.e. if the graph is not complete, the group surjects onto a nonabelian free group and hence the limit inferior is positive. On the other hand, for a complete graph the group is free abelian. 

  For Coxeter groups if there are three pairwise non-adjacent vertices then the group surjects onto the group $\ZZ/2*\ZZ/2*\ZZ/2$ for which the limit inferior is positive (using \eqref{2-gps}). Similarly, if neither in a pair of neighbouring vertices is adjacent to a third vertex the group surjects onto $\ZZ/2*(\ZZ/2 \times \ZZ/2)$ for which the limit inferior is positive as well (again using \eqref{2-gps}). On the other hand, in any other situation the group is virtually abelian: assume the graph is not complete, and let $v_1, v_2$ be two nonadjacent vertices, then any third vertex must be adjacent to both hence the group is isomorphic to $D_\infty \times \Gamma$ and we can use an induction argument on $\Gamma$.

  The result on subgroup growth follows immediately since abelian groups have polynomial subgroup growth (this is an easy exercise, see \cite[Lemma 1.4.1]{Lubotzky_Segal}) and for a subgroup $\Gamma' \le \Gamma$ we have $s_n(\Gamma) \le Cn^{|\Gamma/\Gamma'|} s_n(\Gamma')$ (see \cite[Corollary 1.2.4]{Lubotzky_Segal}). 
\end{proof}

Note that this argument actually gives a lower bound of $1$ (respectively $1/4$) in the limit inferior for non-virtually abelian right-angled Artin groups (respectively Coxeter groups). 


\subsection{Constructing graphs satisfying Conjecture \ref{conjecture_cox}}

\subsubsection{A simple construction of many graphs that satisfy Conjecture \ref{conjecture_cox}} \label{simple_general_const}

One simple construction to turn a graph $\mathcal{G}$ into a graph $\mathcal{G}'$ for which Conjecture \ref{conjecture_cox} holds is to attach two leaves to each vertex of $\mathcal{G}$. If $\mathcal{G}$ has $N$ vertices then $\mathcal{G}'$ will have $3N$ vertices. The number of graphs we construct like this has the same rough growth type as the total number of graphs on $3N$ vertices (it follows from P\'olya's enumeration theorem that this is roughly $\exp(c\;N^2)$).

To see that $\mathcal{G}'$ indeed satisfies Conjecture \ref{conjecture_cox}, we argue as follows. We have two surjections
\[\cox(\Lambda_3)^{*N} \twoheadrightarrow  \cox(\mathcal{G}') \twoheadrightarrow (\ZZ/2\ZZ)^{* 2N} ,\]
where $\Lambda_3$ is the line on $3$ vertices. The first comes from removing the edge set of $\mathcal{G}$ from $\mathcal{G}'$ and the second is obtained by removing $\mathcal{G}$ from $\mathcal{G}'$ entirely. It follows from  Theorem \ref{Main_coxeter} that the groups on the left and right have the same factorial subgroup growth rate. Since the leaves we added to $\mathcal{G}$ realise $\gamma(\mathcal{G}')$, $\mathcal{G}'$ indeed satisfies the conclusion of Conjecture \ref{conjecture_cox}.

\subsubsection{Joins}

Let $\mathcal G_1, \mathcal G_2$ be two graphs. Their {\em join} $\mathcal G_1 \star \mathcal G_2$ is the graph on $\mathcal G_1 \cup \mathcal G_2$ where two vertices in a $\mathcal G_i$ are adjacent if they are adjacent in $\mathcal G_i$, and any two pair of vertices in different $\mathcal G_i$s are adjacent to each other. We have
\[
\cox(\mathcal G_1 \star \mathcal G_2) = \cox(\mathcal G_1) \times \cox(\mathcal G_2). 
\]

\begin{prp}
Suppose there exist $\beta_1,\beta_2\in \mathopen]0,\infty\mathclose[$  so that
\[
\lim_{n\to\infty} \frac{\log(s_n(\cox(\mathcal G_i)))}{n\log(n)} = \beta_i,
\]
for $i=1,2$. Then
  \begin{equation} \label{prod_groups}
\lim_{n\to\infty} \frac{\log(s_n(\cox(\mathcal G_1 \star \mathcal G_2)))}{n\log(n)} = \max\{\beta_1,\beta_2\}.
  \end{equation}
Furthermore if either of the two $\mathcal G_i$'s is not complete then
  \begin{equation} \label{join_graph}
  \gamma(\mathcal G_1 \star \mathcal G_2) = \max(\gamma(\mathcal G_1), \gamma(\mathcal G_2)).
  \end{equation}
In particular, if $\mathcal G_1$ and $\mathcal G_2$ both satisfy Conjecture \ref{conjecture_cox} then their join does as well. 
\end{prp}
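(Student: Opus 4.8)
The statement has two parts: the growth-rate formula \eqref{prod_groups} and the identity \eqref{join_graph} for $\gamma$. I would prove them separately, then combine.

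For \eqref{prod_groups}, the key observation is that $\cox(\mathcal G_1 \star \mathcal G_2) = \cox(\mathcal G_1) \times \cox(\mathcal G_2)$ is a direct product $G = G_1 \times G_2$, and we want to relate $s_n(G_1 \times G_2)$ to $s_n(G_1)$ and $s_n(G_2)$. The lower bound $s_n(G) \ge \max\{s_n(G_1), s_n(G_2)\}$ is immediate, since $G$ surjects onto each $G_i$ (kill the other factor), and a surjection never decreases $s_n$ (pull back the subgroup). Hence $\liminf$ of the normalized log is at least $\max\{\beta_1, \beta_2\}$. For the upper bound, I would count homomorphisms: a homomorphism $\rho: G_1 \times G_2 \to \sym_n$ is \emph{not} determined by the pair $(\rho|_{G_1}, \rho|_{G_2})$ in general, but the images $\rho(G_1)$ and $\rho(G_2)$ are subgroups of $\sym_n$ that commute with each other elementwise. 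I would bound $h_n(G_1 \times G_2)$ by first choosing $\rho|_{G_1}$ (there are $h_n(G_1)$ choices) and then noting $\rho(G_2)$ lies in the centralizer $\cent_{\sym_n}(\rho(G_1))$; a homomorphism $G_2 \to \cent_{\sym_n}(\rho(G_1))$ is then controlled by the orbit structure of $\rho(G_1)$. Concretely, $\cent_{\sym_n}(H)$ for $H \le \sym_n$ acting with orbit-type given by $n = \sum_j n_j$ (grouping isomorphic transitive constituents) is a product of wreath products whose order is crudely at most $(n!)$ in a way that costs at most a $C^{\sqrt n}$-type or even polynomial-in-$n$ correction relative to choosing a homomorphism $G_2 \to \sym_{m}$ for various $m \le n$. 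The cleanest route is probably: $h_n(G_1 \times G_2) \le \sum h_n(G_1) \cdot (\text{number of ways to extend}) \le h_n(G_1) \cdot h_n(G_2) \cdot n^{O(n)}$? — that is too lossy on the $n^n$ scale. Instead I would use that if $\rho(G_1)$ has $f$ orbits then $\cent_{\sym_n}(\rho(G_1))$ embeds in a product $\prod \sym_{a_i} \wr \sym_{b_i}$ with $\sum a_i b_i \le n$ and $\sum b_i \le f$, whose homomorphisms from $G_2$ number at most $\prod h_{a_i}(G_2)^{b_i} \cdot (b_i!)^{?}$; taking logs and using $\log h_m(G_2) \le (\beta_2 + o(1)) m \log m$ together with $\sum a_i b_i \log a_i \le n \log n$ gives the bound $h_n(G_1 \times G_2) \le (n!)^{\max\{\beta_1,\beta_2\} + o(1)}$, and the passage from $h_n$ to $s_n$ is via $s_n = t_n/(n-1)!$ and $t_n \le h_n$. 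This is the step I expect to be the main obstacle: getting the centralizer/orbit bookkeeping to lose only a subexponential-in-$n\log n$ factor rather than an extra $n!$.

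For \eqref{join_graph}, I would argue combinatorially about clique collections. First, any clique collection $\mathcal C$ in $\mathcal G_i$ is a clique collection in $\mathcal G_1 \star \mathcal G_2$ with the same $w$-value, so $\gamma(\mathcal G_1 \star \mathcal G_2) \ge \max(\gamma(\mathcal G_1), \gamma(\mathcal G_2))$. For the reverse inequality, let $\mathcal C$ be a clique collection realising $\gamma(\mathcal G_1 \star \mathcal G_2)$ and consider a connected component $\mathcal C_k$. If $\mathcal C_k$ meets both $\mathcal G_1$ and $\mathcal G_2$, then since \emph{every} vertex of $\mathcal C_k \cap \mathcal G_1$ is adjacent in $\mathcal G_1 \star \mathcal G_2$ to every vertex of $\mathcal C_k \cap \mathcal G_2$, connectivity forces $\mathcal C_k$ to actually be a single clique spanning both sides; moreover any other component $\mathcal C_{k'}$ that meets, say, $\mathcal G_1$ has all its $\mathcal G_1$-vertices adjacent to all of $\mathcal C_k \cap \mathcal G_2 \ne \emptyset$, contradicting $\mathcal C_{k'}$ being a separate component unless $\mathcal C_{k'} \subseteq \mathcal G_1$ is disjoint from and nonadjacent to $\mathcal C_k \cap \mathcal G_1$ — but then $\mathcal C_{k'} \cap \mathcal G_1 \ne \emptyset$ is adjacent to $\mathcal C_k \cap \mathcal G_2$, still a contradiction. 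So at most one component is "mixed", and if there is a mixed component then it is the only component. In the all-unmixed case, the components partition into those inside $\mathcal G_1$ and those inside $\mathcal G_2$, and since any $\mathcal G_1$-component and any $\mathcal G_2$-component would be joined by an edge (hence cannot both be separate components), in fact all components lie on one side, giving $w(\mathcal C) \le \max(\gamma(\mathcal G_1), \gamma(\mathcal G_2))$. In the mixed case $\mathcal C$ is a single clique $K$ with $a = |K \cap \mathcal G_1| \ge 1$, $b = |K \cap \mathcal G_2| \ge 1$, so $w(\mathcal C) = 1 - 2^{-(a+b)}$; but the clique $K \cap \mathcal G_1$ (say) is a clique collection in $\mathcal G_1$ with value $1 - 2^{-a}$, and since $\mathcal G_1$ is assumed not complete there is a vertex $u \in \mathcal G_1$ nonadjacent to something — I would use this to build a clique collection in $\mathcal G_1$ (or in $\mathcal G_1 \star \mathcal G_2$ restricted appropriately) with value at least $1 - 2^{-(a+b)}$; more simply, observe $1 - 2^{-(a+b)} < 1 \le \gamma(\mathcal G_i)$ whenever $\mathcal G_i$ is not complete (a non-complete graph has two nonadjacent vertices, a clique collection with two singleton components, $w = 2 \cdot \tfrac12 = 1$), so the mixed case never beats $\max(\gamma(\mathcal G_1), \gamma(\mathcal G_2))$. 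This disposes of \eqref{join_graph}.

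Finally, combining: if both $\mathcal G_i$ satisfy Conjecture \ref{conjecture_cox} then $\beta_i = \gamma(\mathcal G_i) - 1$; if additionally at least one $\mathcal G_i$ is non-complete then $\mathcal G_1 \star \mathcal G_2$ is non-complete, \eqref{prod_groups} gives growth rate $\max\{\beta_1, \beta_2\} = \max(\gamma(\mathcal G_1), \gamma(\mathcal G_2)) - 1 = \gamma(\mathcal G_1 \star \mathcal G_2) - 1$ by \eqref{join_graph}, which is exactly the conjectured value for the join. (If both $\mathcal G_i$ were complete the join is complete and the hypothesis $\beta_i \in \mathopen]0,\infty\mathclose[$ fails, so that case does not arise.) The remaining care is just to confirm $\mathcal G_1 \star \mathcal G_2$ is non-complete precisely when some $\mathcal G_i$ is non-complete — indeed two nonadjacent vertices of the join must lie in the same $\mathcal G_i$.
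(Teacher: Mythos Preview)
Your treatment of \eqref{join_graph} is correct and is essentially the paper's argument written out in more detail: a clique collection meeting both $\mathcal G_i$'s is forced to be a single clique (hence has $w<1$), while a non-complete $\mathcal G_i$ has $\gamma(\mathcal G_i)\ge 1$ via two nonadjacent singletons; so the maximising collection sits entirely inside one factor.

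For \eqref{prod_groups} your lower bound is fine. The upper bound, however, is where your sketch stalls, and the paper resolves precisely the ``main obstacle'' you flag by a different route. Rather than bounding $h_n(\Gamma_1\times\Gamma_2)$ and wrestling with arbitrary orbit profiles of $\rho|_{\Gamma_1}$, the paper bounds $t_n(\Gamma_1\times\Gamma_2)$ directly and exploits a structural fact available only for \emph{transitive} representations: if $\rho:\Gamma_1\times\Gamma_2\to\sym_n$ is transitive, then all $\rho(\Gamma_1)$-orbits have the \emph{same} size $d$ (since $\rho(\Gamma_2)$ commutes with $\rho(\Gamma_1)$, it permutes the $\Gamma_1$-orbits, and it must do so transitively because $\rho$ is transitive). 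This collapses your multi-parameter bookkeeping $\prod \sym_{a_i}\wr\sym_{b_i}$ to a single divisor $d\mid n$: one gets $\rho(\Gamma_2)\hookrightarrow \sym_d\wr\sym_{n/d}$, the induced action on the orbit set is a transitive representation $\Gamma_2\to\sym_{n/d}$, and within each orbit the centraliser of the transitive $\Gamma_1$-action has order at most $d$. The paper's resulting inequality
\[
t_n(\Gamma_1\times\Gamma_2)\ \le\ \sum_{d\mid n} t_d(\Gamma_1)\cdot (d!)^{n/d-1}\cdot t_{n/d}(\Gamma_2)\cdot d^{\,n/d}
\]
is then a one-variable optimisation: taking logarithms and using $\log t_m(\Gamma_i)=(\beta_i+1+o(1))\,m\log m$ one checks the maximum over $d\mid n$ is $(\max\{\beta_1,\beta_2\}+1+o(1))\,n\log n$.

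Your $h_n$-based strategy is not wrong in spirit, but as written it is not a proof: you would need to sum over all orbit types of $\rho_1$, balancing the number of $\rho_1$'s with a given type against $|\Hom(\Gamma_2,\cent_{\sym_n}(\rho_1(\Gamma_1)))|$, and show the product never exceeds $(n!)^{\max\{\beta_1,\beta_2\}+1+o(1)}$. That is doable but laborious; the transitivity trick sidesteps it entirely and is the idea you are missing.
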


\begin{proof}
Not every transitive permutation representation $\rho:\Gamma_1\times \Gamma_2 \to \sym_n$ restricts to a transitive permutation representation of $\rho_1:\Gamma_1\to\sym_n$. However, because $\Gamma_2$ can only permute $\rho_1(\Gamma_1)$-orbits of equal size and $\rho$ is transitive, all the orbits of $\rho_1(\Gamma)$ do need to be of equal size. Let us call this orbit size $d$. If we write $\orb(\Gamma_1) = \{1,\ldots,n\} / \rho_1(\Gamma_1)$, we have a map
\[\rho_2(\Gamma_2) \hookrightarrow \sym_d\wr \sym(\orb(\Gamma_1)) \simeq \sym_d\wr \sym_{n/d},\]
where $\sym(\orb(\Gamma_1))$ denotes the group of bijections $\orb(\Gamma_1) \to \orb(\Gamma_1)$ and $\rho_2$ denotes the restriction of $\rho$ to $\Gamma_2$. Using the fact that $\Gamma_1$ is transitive within its orbits, the number of possibilities for the action of $\Gamma_2$ in the $\rho_1(\Gamma_1)$ orbits is very limited (Lemma \ref{lem_centbd2}). As such, we obtain
  \[
  t_n(\Gamma_1 \times \Gamma_2) \le \sum_{d|n} t_d(\Gamma_1) \cdot (d!)^{\frac n d - 1} \cdot t_{n/d}(\Gamma_2) \cdot d^{n/d}.
  \]
By assumption there exists a function $F:\NN\to\RR$ so that for $i=1,2$,
\[\frac{1}{F(n)} \leq s_n(\Gamma_i)/(n!)^{\beta_i} \leq F(n)\]  
 and $\log(F(n)) = o(n\log(n))$.
Filling this into the bound above, we get
   \[
  \log(t_n(\Gamma_1 \times \Gamma_2)) \le \max_{d|n}\left\{\log\left(n^{\beta_2\;n/d} \;d^{n-d+\beta_1\;d - \beta_2\; n/d}\right)\right\} + o(n \log(n)) 
  \]
  and \eqref{prod_groups} follows.

  To prove \eqref{join_graph} we observe that if $\mathcal C$ is a clique collection which contains a vertex in both of the two $\mathcal G_i$'s then it must consist of a single clique, so that $w(\mathcal C) < 1$ and if one of $\mathcal G_i$ has $\gamma \ge 1$ then any clique collection realising $\gamma(\mathcal G_1 \star \mathcal G_2)$ must be contained in one of $\mathcal G_1$, $\mathcal G_2$. 
\end{proof}


\subsubsection{Graphs with bounded degree} \label{examples_bdd_deg}

The graphs satisfying the hypotheses of Theorem \ref{Main_coxeter} directly are atypical but there are still many of them. For instance, the simplest construction of regular graphs that satisfy the hypotheses gives at least $(cN)!$ graphs (where we can take $c \ge 1/5-\eps$ for any $\eps > 0$) on $N$ vertices (for a particular sequence of $N$s) (it is easy to modify it to get many more similar examples with better density but we only describe a particularly simple case below). 

We proceeds a follows: let $\mathcal T_r$ be the rooted binary tree of height $r$, to which we add a single vertex adjacent only to the root. We have $|\mathcal T_r| = 2^{r+1}$ and we will construct a graph on $N = 2^{r+1} + 2^{r-1}$ vertices by gluing $2^{r-1}$ vertices to the $2^r$ leaves (excluding the leaf connected to the root) of $\mathcal T_r$. Let $M = 2^{r-1} = N/5$, let $l_1, \ldots, l_{2M}$ these leaves and $v_1, \ldots, v_M$ the additional vertices, then there are $(2M)!/(2^M M!)$ possibilities for joining each $v_i$ to two $l_j$s, such that no leaf is adjacent to more than one $v_i$. Any isomorphism between resulting graphs induces an automorphisms of $\mathcal T_r$, of which there are $2^{2M-1}$ (at each non-leaf vertex we choose whether to switch its descendents or not). In the end we thus get $(2N/5)!/((N/5)!2^{3N/5-1})$ pairwise non-isomorphic graphs on $N$ vertices satisfying the conclusion of Conjecture \ref{conjecture_cox}. 

\medskip

The following easy lemma will allow us to construct, in a different way, large families of examples. 

\begin{lem} 
  Let $\mathcal G_1, \mathcal G_2$ be two graphs and $\mathcal C_i$ two clique collections such that $\gamma(\mathcal G_i) = w(\mathcal C_i)$. Assume that $\mathcal G_1, \mathcal G_2$ both satisfy the conclusion of Conjecture \ref{conjecture_cox}. Let $\mathcal G$ be a graph on $\mathcal G_1 \cup \mathcal G_2$ so that
  \begin{itemize}
 \item[1.] $\mathcal G$   induces $\mathcal G_i$ on its vertices and 
 \item[2.] no vertex of $\mathcal C_1$ is adjacent to a vertex of $\mathcal C_2$ in $\mathcal G$. 
\end{itemize}
Then the conjecture holds for $\mathcal G$ as well. 
\end{lem}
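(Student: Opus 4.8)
The plan is to deduce the conjectural limit from two one-sided bounds. The lower bound $\liminf_{n}\frac{\log s_n(\cox(\mathcal G))}{n\log n}\ge\gamma(\mathcal G)-1$ is the general one established in the introduction: a clique collection $\mathcal C$ of $\mathcal G$ realising $\gamma(\mathcal G)$ is an induced subgraph, so there is a surjection $\cox(\mathcal G)\twoheadrightarrow\cox(\mathcal C)$, whence $s_n(\cox(\mathcal G))\ge s_n(\cox(\mathcal C))$, and the growth of the right-hand side is given by \eqref{2-gps}. So the real work is the upper bound $\limsup_{n}\frac{\log s_n(\cox(\mathcal G))}{n\log n}\le\gamma(\mathcal G)-1$, which I would split into a combinatorial identity and a counting estimate.

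The combinatorial identity I need is $\gamma(\mathcal G)=\gamma(\mathcal G_1)+\gamma(\mathcal G_2)$. For ``$\ge$'' --- this is exactly where hypothesis 2 enters --- hypotheses 1 and 2 imply that the subgraph of $\mathcal G$ induced on $V(\mathcal C_1)\cup V(\mathcal C_2)$ is the disjoint union $\mathcal C_1\sqcup\mathcal C_2$; all of its components are complete, so it is a clique collection of $\mathcal G$ of weight $w(\mathcal C_1)+w(\mathcal C_2)=\gamma(\mathcal G_1)+\gamma(\mathcal G_2)$. For ``$\le$'', given any clique collection $\mathcal C$ of $\mathcal G$ I cut each of its (complete) components along the partition of the vertices of $\mathcal G$ into $V(\mathcal G_1)$ and $V(\mathcal G_2)$; this produces clique collections $\mathcal C^{(i)}$ of $\mathcal G_i$, and the elementary inequality $(1-2^{-a})(1-2^{-b})\ge 0$, i.e. $2^{-a}+2^{-b}\le 1+2^{-(a+b)}$, applied to each component shows $w(\mathcal C)\le w(\mathcal C^{(1)})+w(\mathcal C^{(2)})\le\gamma(\mathcal G_1)+\gamma(\mathcal G_2)$.

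For the counting side I would delete from $\mathcal G$ every edge joining a vertex of $\mathcal G_1$ to a vertex of $\mathcal G_2$; since for right-angled Coxeter groups deleting an edge only forgets a commutation relation, hypothesis 1 gives a surjection $\cox(\mathcal G_1)*\cox(\mathcal G_2)\twoheadrightarrow\cox(\mathcal G)$. A surjection $A\twoheadrightarrow B$ forces $h_n(B)\le h_n(A)$, and $h_n$ is multiplicative on free products (a homomorphism of $\cox(\mathcal G_1)*\cox(\mathcal G_2)$ into $\sym_n$ being exactly a pair of homomorphisms), so $h_n(\cox(\mathcal G))\le h_n(\cox(\mathcal G_1))\cdot h_n(\cox(\mathcal G_2))$; combined with $s_n(\Gamma)=t_n(\Gamma)/(n-1)!\le h_n(\Gamma)/(n-1)!$ this reduces matters to bounding $h_n(\cox(\mathcal G_i))$. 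Here the hypothesis that $\mathcal G_i$ satisfies the conclusion of Conjecture \ref{conjecture_cox} gives $s_k(\cox(\mathcal G_i))\le C^{o(k\log k)}(k!)^{\gamma(\mathcal G_i)-1}$ for all $k$ (note $\gamma(\mathcal G_i)\ge 1$, as $\mathcal G_i$ is not complete), and feeding this and $t_k=(k-1)!\,s_k$ into the recursion $h_n=\sum_{k=1}^n\binom{n-1}{k-1}t_k h_{n-k}$ of \cite[Lemma 1.1.3]{Lubotzky_Segal}, using $\binom{n-1}{k-1}\le 2^n$ and $(k!(n-k)!)^{\gamma(\mathcal G_i)-1}\le(n!)^{\gamma(\mathcal G_i)-1}$, a short induction gives $h_n(\cox(\mathcal G_i))\le C^{o(n\log n)}(n!)^{\gamma(\mathcal G_i)}$. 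Multiplying the two estimates and dividing by $(n-1)!$ yields $s_n(\cox(\mathcal G))\le C^{o(n\log n)}(n!)^{\gamma(\mathcal G_1)+\gamma(\mathcal G_2)-1}=C^{o(n\log n)}(n!)^{\gamma(\mathcal G)-1}$, as required.

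Every ingredient here is either elementary or already in the paper, so I do not expect a serious obstacle; the only point needing a little care is keeping the subexponential factor $C^{o(n\log n)}$ under control while running the recursion for $h_n$ in the last step, which is a routine computation of exactly the type underlying the free-product asymptotics cited above.
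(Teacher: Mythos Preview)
Your proposal is correct and follows essentially the same route as the paper: both compute $\gamma(\mathcal G)=\gamma(\mathcal G_1)+\gamma(\mathcal G_2)$ and sandwich $\cox(\mathcal G)$ between $\cox(\mathcal G_1)*\cox(\mathcal G_2)$ (upper bound) and $\cox(\mathcal C_1)*\cox(\mathcal C_2)$ (lower bound). Your write-up is in fact more explicit than the paper's in two places---the splitting inequality $1-2^{-(a+b)}\le(1-2^{-a})+(1-2^{-b})$ underlying $\gamma(\mathcal G)\le\gamma(\mathcal G_1)+\gamma(\mathcal G_2)$, and the recursive passage from the $s_n$ bound to an $h_n$ bound---both of which the paper leaves to the reader.
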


\begin{proof}
  We have
  \[
  \gamma(\mathcal G) \le \gamma(\mathcal G_1 \cup \mathcal G_2) =  \gamma(\mathcal G_1) + \gamma(\mathcal G_2)
  \]
  and since $\mathcal C_1 \cup \mathcal C_2$ is a clique collection in $\mathcal G$ it follows that
  \[
  \gamma(\mathcal G) =  \gamma(\mathcal G_1) + \gamma(\mathcal G_2). 
  \]
  Similarly, the fact that
  \[
  \cox(\mathcal G_1) * \cox(\mathcal G_2) \twoheadrightarrow \cox(\mathcal G) \twoheadrightarrow \cox(\mathcal C_1) * \cox(\mathcal C_2)
  \]
  and that the conjecture holds for the $\mathcal G_i$ (as for the $\mathcal C_i$) implies that
  \begin{multline*}
    \gamma(\mathcal G_1) + \gamma(\mathcal G_2) - 1 \ge \limsup_{n \to +\infty} \frac{\log s_n(\Gamma)}{n\log n} \ge \\
    \liminf_{n \to +\infty} \frac{\log s_n(\Gamma)}{n\log n} \ge \gamma(\mathcal G_1) + \gamma(\mathcal G_2) - 1
  \end{multline*}
  which finishes the proof. 
\end{proof}

Now let $\mathcal I$ be a graph such that there exists a clique collection $\mathcal C$ in $\mathcal I$ with $w(\mathcal C) = \gamma(\mathcal I)$ and $\mathcal I \setminus \mathcal C$ contains at least three vertices such that no two of them are adjacent to each other. Assume that $\mathcal I$ satisfies Conjecture \ref{conjecture_cox}; for example we may take $\mathcal I$ to be the cycle graph on six vertices. Then for any trivalent graph $\mathcal H$ we may construct a graph $\mathcal H_{\mathcal I}$ satisfying the conclusion of the conjecture, by gluing copies of $\mathcal I$ along vertices outside $\mathcal C$, according to the pattern prescribed by $\mathcal H$. Indeed, as $\bigcup_{v \in \mathcal H} \mathcal C$ and $\bigcup_{v \in \mathcal H} \mathcal I$ are respectively a subgraph and an induced subgraph of $\mathcal H_{\mathcal I}$ we get sujections
\[
\Asterisk_{v \in \mathcal H} \cox(\mathcal I) = \cox\left(\bigcup_{v \in \mathcal H} \mathcal I\right) \twoheadrightarrow \cox(\mathcal H_i) \twoheadrightarrow \cox\left(\bigcup_{v \in \mathcal H} \mathcal C \right)
\]
and as we have $\gamma\left(\bigcup_{v \in \mathcal H} \mathcal I\right) = \left(\bigcup_{v \in \mathcal H} \mathcal C\right)$ and both graphs satisfy the conjectures if follows that $\mathcal H_{\mathcal I}$ does as well. 

The number of trivalent graphs on $M$ vertices is of order $(M/2)!$ (up to at most exponential factors) by \cite{Bollobas_nb_graphs} So the construction above yields about $(N/12)!$ graphs on $N$ vertices which satisfy Conjecture \ref{conjecture_cox}. 


\subsection{Hyperbolic Coxeter groups}

By a theorem of G.~Moussong \cite{Moussong} right-angled Coxeter group is word-hyperbolic if and only if it does not contain $\ZZ^2$. If it is right-angled then in terms of its defining graph this means that the latter does not contain an induced square. From Proposition \ref{gap} we see that any non-virtually cyclic hyperbolic Coxeter group has subgroup growth type $n^n$. 

There are thus plenty of word-hyperbolic Coxeter groups. Geometric hyperbolic Coxeter groups (that is, Coxeter groups which act cocompactly on an hyperbolic space) are much harder to construct and in fact they don not exist in high dimensions. For example right-angled Coxeter groups cannot embed as discrete cocompact subgroups of $\mathrm{PO}(n, 1)$ for $n \ge 5$. For cofinite subgroups the best known upper bound is $12$ while there are examples known up to dimension $8$.

A simple example of such a group, for which we know that the conclusion of Conjecture \ref{conjecture_cox} holds, is any right-angled Coxeter group defined by a tree on more than three vertices. The first family of examples given in \ref{examples_bdd_deg} also satisfy Moussong's condition, if we ask that the added vertices do not connect two leaves at distance 2 (as an illustration the first graph in Figure \ref{example_thm} gives an hyperbolic group, but not the second). Thus we have plenty of examples of hyperbolic Coxeter groups for which our conjecture is true. Unfortunately, beyond the $2$-dimensional case Theorem \ref{Main_coxeter} does not apply to these groups. We will give an elementary proof for a 3-dimensional cofinite group below.


\subsection{Fuchsian groups} \label{Fuchsian_RACG}

We will give three-dimensional examples for the theorem and for the conjecture in the next subsection. For now we explain how our result overlaps with that of Liebeck and Shalev mentioned above. The right-angled Coxeter group $\Gamma = \cox(\mathcal G)$ is Fuchsian if and only if $\mathcal G$ is either a disjoint union of lines, or a cycle with at least five vertices: clearly, both classes of graphs satisfy the hypotheses of Theorem \ref{Main_coxeter}. Computing $\gamma$ in this case is easy: 
\begin{enumerate}
\item \label{calc_facile1} for $\mathcal G = \mathcal L_r$, a line with $r$ vertices we have $\gamma(\mathcal L_r) = (r+1)/4$ ;

\item \label{calc_facile2} for $\mathcal G = \mathcal P_r$ is a cycle on $r$ vertices we have $\gamma(\mathcal P_r) = r/4$. 
\end{enumerate}
In the former case the group can be either cofinite or not and in the latter case it is always cocompact. If
\[
\mathcal G = \mathcal L_{r_1} \cup \cdots \cup \mathcal L_{r_s}
\]
then $\gamma(\mathcal G) = (s + \sum_{i=1}^s r_i)/4$ by \ref{calc_facile1}. On the other hand the volume of an hyperbolic right-angled polygon with $k$ right angles and $l$ ideal vertices equals $(k+l-2)\pi - k\pi/2$, and if it is a fundamental domain for $\cox(\mathcal G)$ then $l = s$ and $k = \sum_{i=1}^s(r_i-1)$. So we see that
\begin{align*}
  \gamma(\mathcal G) - 1 &= \frac 1 4 \left(-4 + s + \sum_{i=1}^s r_i \right) \\
  &= \frac 1 4(-4 + l + (k + l)) \\
  &= \frac{ (-2 + l + k/2)\pi}{2\pi} = -\chi(\Gamma)
\end{align*}
so we recover \eqref{fuchsian} in this case.

The cocompact case is immediate: the volume of a right-angled $r$-gon in $\HH^2$ is $(r-2)\pi - r\pi/2 = (r/4 - 1)2\pi$ so by \ref{calc_facile2} we see that our result and \eqref{fuchsian} are also in accordance for this case.


\subsection{Hyperbolic three--manifolds and orbifolds}\label{sec_hyp3folds}

We saw that in the case of Fuchsian groups the subgroup growth rate (which we consider here only through the factorial growth rate, as in \eqref{growth_rate}) and the covolume are linearly related. We will see here that this is not the case for lattices in three-dimensional hyperbolic space. There is still some relation, though not as precise and only in one direction: a result of T.~Gelander \cite{Gelander_rank} states that there is a constant $C$ such that, if $\Gamma \subset \mathrm{PSL}_2(\CC)$ is a discrete subgroup of finite covolume (a lattice---the result is proven more generally for all lattices in semisimple Lie groups) then it is generated by at most $C\vol(\Gamma\bs \HH^3)$ elements. The subgroup growth rate is at most the number of gerenators minus one so we see that it is linearly bounded by the volume. 

The solution of Thurston's conjectures on hyperbolic 3--manifolds by I.~Agol \cite{Agol} (following the work of D.~Wise) allows to give an overall picture of subgroup growth for their fundamental groups. Let $\Gamma$ be a lattice in $\mathrm{PSL}_2(\CC)$. The two results of loc. cit. which are of interest to us here are:
\begin{enumerate}
\item \label{large_gp} There exists a finite-index subgroup $\Gamma' \subset \Gamma$ which admits a surjective morphism to a nonabelian free group $\Phi$.

\item \label{vf} There exists a surface $S$ of finite type, a pseudo-Anosov diffeomorphism $f$ of $S$ and a subgroup $\Gamma' \subset \Gamma$ of finite index such that:
  \[
  \Gamma' \cong \langle \pi_1(S), t:  tgt^{-1} = f_*g,\, \forall g \in \pi_1(S)  \rangle.
  \]  
\end{enumerate}
The first statement immediately implies that
\[
\liminf_{n \to +\infty} \frac{\log s_{nd}(\Gamma)}{nd\log(nd)} \ge \frac 1 d
\]
where $d = |\Gamma/\Gamma'|$. Note however that Agol and Wise's arguments do not give a bound on $d$. In addition, taking $\Gamma_r$ to be the preimage in $\Gamma'$ of a subgroup of index $r$ in $\Phi$ we see that $\Gamma_r$ surjects onto a free group of rank at least $r$ and thus
\[
\liminf_{n \to +\infty} \frac{\log s_n(\Gamma_r)}{n\log n} \ge r-1 \ge \delta\vol(\Gamma_r\bs \HH^3)
\]
for some $\delta > 0$  depending only on $\Gamma$.

On the other hand, statement \ref{vf} allow to construct examples where the volume goes to infinity but the growth rate of $s_n$ stays bounded. More precisely, let $S, f, \Gamma'$ be as in the statement and let
\[
\Gamma_r = \langle \pi_1(S), t^r \rangle \subset \Gamma'. 
\]
Then $\Gamma_r$ is of index $r$ in $\Gamma'$ (so that $\vol(\Gamma_r \bs \HH^3)$ goes to infinity) but it is of rank at most $3 - \chi(S)$ and so
\[
\limsup_{n \to +\infty} \frac{\log s_n(\Gamma_r)}{n\log n} \le 2 - \chi(S)
\]
is bounded.

In conclusion, the subgroup growth of fundamental groups of hyperbolic 3--manifolds is a land of contrasts: the growth rate is always positive but it can be as large (linear in the volume) or as small (bounded) as possible. 


\subsubsection{Right-angled Coxeter polytopes in $\HH^3$}

Let $\mathbb X$ be a constant curvature space. A {\em Coxeter polytope} in $\mathbb X$ is a convex polytope $P$, all of whose dihedral angles (angles between the normals to top-dimensional faces) are of the form $\pi/m$ for some integer $m \ge 2$. It is {\em right-angled} if all of these are $\pi/2$. The subgroup of $\isom(\mathbb X)$ generated by the reflections in the top-dimensional faces of $P$ is then a discrete subgroup of $\isom(\mathbb X)$, of which $P$ is a fundamental domain (this follows from Poincar\'e's polyhedron theorem). Since two reflections whose mirrors are perpendicular to each other commute, the group associated to a right-angled polytope is a right-angled Coxeter group.

Here we will look at groups for which $P$ is a finite-volume polytope in $\HH^3$. Compared to the general case the growth rate for $s_n$ when restricted to these groups is well-behaved with respect to the volume. We observe the following fact. 

\begin{prp} \label{bd_volume}
  There exists $0 < c \le C$ such that for every right-angled Coxeter polytope $P$ in $\mathbb H^3$ of finite volume, with Coxeter graph $\mathcal G$, we have
  \[
  c\vol(P) \le \liminf_{n \to +\infty} \frac{s_n(\cox(\mathcal G))}{n\log n} \le \limsup_{n \to +\infty} \frac{s_n(\cox(\mathcal G))}{n\log n} \le C\vol(P).
  \]
\end{prp}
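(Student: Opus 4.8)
\textbf{Proof strategy for Proposition \ref{bd_volume}.}

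The plan is to bound both the subgroup growth rate and the volume linearly in terms of the number of vertices $|\mathcal G|$ of the Coxeter graph, so that the two bounds transfer to each other. Concretely, I would first establish a two-sided inequality
\[
c_1 |\mathcal G| \le \vol(P) \le c_2 |\mathcal G|
\]
for right-angled Coxeter polytopes $P$ in $\HH^3$, with $c_1, c_2 > 0$ absolute constants. The upper bound is the easier direction: each top-dimensional face of $P$ contributes one generator, and a right-angled hyperbolic polytope in $\HH^3$ decomposes into a bounded number of pieces of bounded volume per face (alternatively, one may invoke the fact that ideal or finite-volume right-angled $3$-polytopes have bounded combinatorial complexity per unit volume, or simply that the number of faces is at least $4$ and grows, while Andreev's theorem constrains the combinatorics). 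The lower bound $c_1|\mathcal G| \le \vol(P)$ follows because the dihedral angles are all $\pi/2$, so the polytope is not too thin: for instance one can use that the $1$-skeleton of a right-angled polytope in $\HH^3$ is $3$-connected and simple (by Andreev's theorem), whence the number of faces, edges and vertices are comparable, and each ideal or finite vertex figure carries a definite amount of volume.

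Second, I would bound the subgroup growth rate in terms of $|\mathcal G|$. For the upper bound, $\cox(\mathcal G)$ is generated by $|\mathcal G|$ involutions, hence the subgroup growth rate of \eqref{growth_rate} is at most $|\mathcal G| - 1$ (this is the standard bound: $h_n(\Gamma) \le (n!)^{d}$ for a $d$-generated group, and $s_n \le h_n/(n-1)!$ up to lower-order factors). For the lower bound, I would exhibit inside $\mathcal G$ a clique collection whose weight $w(\mathcal C)$ is bounded below linearly in $|\mathcal G|$. Since $\mathcal G$ is the graph of a right-angled polytope in $\HH^3$, Andreev's theorem forces it to be simple, planar and triangle-free-enough; in particular it has bounded degree (each vertex lies on exactly $3$ faces in the compact case, and boundedly many in the finite-volume case), so by a greedy argument it contains an independent set (a clique collection all of whose components are single vertices) of size $\ge |\mathcal G| / K$ for an absolute $K$, giving $\gamma(\mathcal G) \ge \tfrac12 |\mathcal G| / K$. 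Then the already-established lower bound for subgroup growth (the liminf statement in Conjecture \ref{conjecture_cox}, which holds unconditionally, via \eqref{2-gps} applied to the surjection onto $\cox(\mathcal C)$) gives $\liminf_n \frac{\log s_n}{n\log n} \ge \gamma(\mathcal G) - 1 \ge c_3 |\mathcal G|$.

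Combining these, $\vol(P) \asymp |\mathcal G| \asymp$ (subgroup growth rate), with all implied constants absolute, which is exactly the claim. The main obstacle I anticipate is the purely geometric input: controlling $\vol(P)$ by $|\mathcal G|$ from \emph{below} uniformly over \emph{all} finite-volume right-angled Coxeter polytopes in $\HH^3$, including the non-compact ones with ideal vertices, where faces can meet at cusps and one must ensure a definite volume contribution per face without a lower bound on edge lengths. This is where I would lean on Andreev's theorem (and the classification/structure of right-angled hyperbolic $3$-polytopes, e.g. work of Pogorelov, Vesnin, and others) to pin down the combinatorics: every vertex has degree $3$, every face has at least $5$ sides in the compact case (at least $4$ in the finite-volume case), so Euler's formula makes $V, E, F$ all comparable, and a compactness/limiting argument over the finitely many combinatorial types with few faces handles the base cases while the structure theory handles the general one.
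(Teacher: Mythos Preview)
Your overall architecture---reduce both volume and growth rate to the face count $|\mathcal G|$, then compare---is exactly what the paper does. But there is one genuine gap and one noteworthy difference in route.

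\textbf{The gap.} You assert that $\mathcal G$ has bounded degree. This is false. The vertices of the Coxeter graph $\mathcal G$ are the \emph{2-dimensional faces} of $P$, and such a face can be an $m$-gon for arbitrarily large $m$, hence adjacent to $m$ other faces in $\mathcal G$. (You appear to be thinking of the valence of \emph{vertices of $P$}, which is indeed $3$ or $4$ by Andreev, but that is a different graph.) What \emph{is} true, and what the paper uses, is that the \emph{average} degree of $\mathcal G$ is bounded: writing $F=|\mathcal G|$ and $A$ for the edge count of $\mathcal G$, Euler's formula and the valence bound on vertices of $P$ give $A\le 3F-6$, hence $\sum_v d(v)=2A<6F$. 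The paper then applies Markov's inequality to find $\ge F/7$ vertices of degree $\le 6$, and on that induced subgraph uses the elementary bound $\alpha(\mathcal H)\ge |\mathcal H|/(\Delta+1)$ to get $\alpha(\mathcal G)\ge F/49$. Your greedy argument can be repaired along these lines (or via Caro--Wei), but not as written.

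\textbf{The difference in route.} For the upper bound on the growth rate, the paper simply invokes Gelander's theorem that any lattice in $\mathrm{PSL}_2(\CC)$ is generated by $\le C\vol$ elements; no further geometry is needed. Your route instead goes through the trivial bound (rank $\le |\mathcal G|$) and therefore requires the \emph{lower} volume bound $\vol(P)\ge c_1|\mathcal G|$, which you correctly flag as the main obstacle. That inequality is in fact true (Atkinson proves two-sided linear bounds in the vertex count), so your approach can be made to work with that citation; but the paper sidesteps the issue entirely by using Gelander, needing only Atkinson's \emph{upper} bound $\vol(P)\le c_0 N$ to transfer the independence-number lower bound back to a volume lower bound.
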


\begin{proof}
  The upper bound follows from Gelander's much more general result and thus we need only to prove the lower bound. Let $P$ have $N$ vertices. By \cite[Corollary 1]{Atkinson_volume_bounds} there exists $C_0$ such that
  \[
  \vol(P) \le c_0N. 
  \]
  It follows that we may replace $\vol(P)$ in the statement by $N$. Let $F$ be the number of faces and $A$ of edges in $P$, then by Euler's formula we have  $N-A+F = 2$. By Andreev's theorem (see \cite[Theorem 3]{Atkinson_volume_bounds}) all vertices of $P$ are 3- or 4-valent and it follows that we have
  \[
  3N/2 \le A \le 2N
  \]
  hence
  \[
  N/2 + 2 \le F \le N + 2
  \]
  and thus we only have to prove that  
  \[
  c|\mathcal G| \le \liminf_{n \to +\infty} \frac{s_n(\cox(\mathcal G))}{n\log n}. 
  \]
  We first note that the graph $\mathcal G$ has $A$ edges and as above we see that $A \le 3F - 6$ and so if we denote by $d(v)$ the degree of a vertex we have :
  \[
  \sum_{v \in \mathcal G} d(v) = 2A \le 6F - 12
  \]
  and by Markov's inequality it follows that at least $N/7$ vertices have degree at most 6 in $\mathcal G$. Let $\mathcal H$ be the graph induced on the vertices of degree at most $6$ in $\mathcal G$. Note that $\alpha(\mathcal G) \geq \alpha(\mathcal H)$. Now \cite[Theorem I.b]{Rosenfeld}\footnote{In fact, the result is only stated for regular graphs in \cite{Rosenfeld}, but the proof works verbatim in the slightly more general case of graphs with boudned degree.} implies that: 
  \[
  \alpha(\mathcal H) \geq \frac{\card{\mathcal H}}{7} \geq \frac{N}{49},
  \]  
  which finishes the proof since $\gamma(\mathcal G) \ge \alpha(\mathcal G)/2$. 
\end{proof}


\subsubsection{Ideal octahedron} \label{sec_octahedron}

We can rather easily give good bounds for the subgroup growth rate of the reflection group associated to the ideal right-angled octahedron. The graph $\mathcal G$ for this polytope is the cubical graph 
\begin{figure}[h] \label{ratios}
\centering
\includegraphics[trim=0cm 0cm 0cm 0cm, width=.3\textwidth]{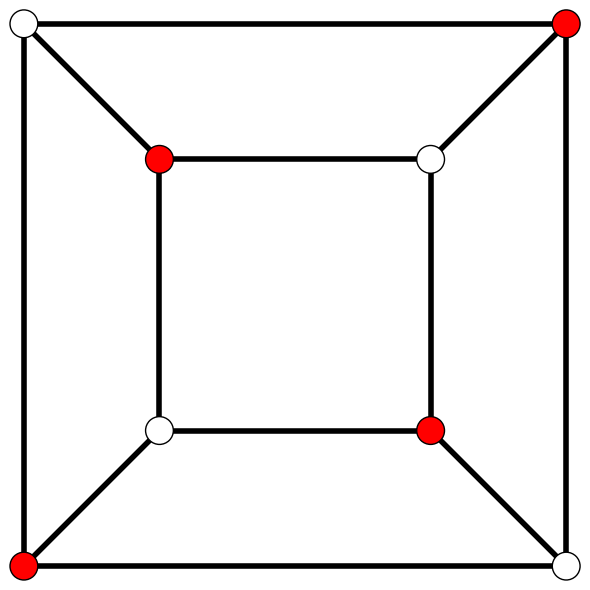}
\end{figure}
and we have $\gamma(\mathcal G) = 2$. On the other hand we can remove four edges from the cube to get a disjoint union of two squares. As the right-angled Coxeter groups associated to a square is $D_\infty \times D_\infty$ (where $D_\infty$ is the infinite dihedral group) we obtain a surjection
\[
(D_\infty \times D_\infty) * (D_\infty \times D_\infty) \to \cox(\mathcal G)
\]
which shows that there exists $C$ such that 
\[
h_n(\cox(\mathcal G)) \le C^{2n} \cdot (n!)^2
\]
as $D_\infty \times D_\infty$ is virtually abelian and hence $h_n(D_\infty \times D_\infty) \le C^nn!$ (see the remark after Proposition \ref{gap}). In particular we obtain
\begin{equation} \label{octahedron}
  \lim_{n \to +\infty} \frac{\log s_n(\Gamma)}{n\log n} = 1.
\end{equation}
It is also easy to compute the covolume of $\cox(\mathcal G)$: the octahedron decomposes as a union of 4 tetrahedra with dihedral angles $((\pi/2)^2, (\pi/4)^4)$ whose volume is thus given by
\[
\vol(O) = 8\Loba(\pi/4) + 4\Loba(\pi/2) = 8\Loba(\pi/4) \cong 3.664...
\]
where $\Loba$ is the Lobachevsky function. 



\section{Direct combinatorial approaches} \label{direct}

 In this section we record two upper bounds on the growth rates we are after. On the one hand the proofs here are much simpler than those in the following sections. Moreover these proofs give sharper bounds. On the other hand, the methods in this section apply to a much more restricted set of graphs.

\subsection{Right-angled groups associated to bipartite graphs}~

 We start with the case where the defining graph is bipartite.

\subsubsection{Artin groups}~

 It turns out that in the case of right-angled Artin groups, a classical theorem from graph theory, due to K\"onig, gives the growth rate.

\begin{prp} \label{artin_bipartite}
  Let $\mathcal G$ be a bipartite graph and $\Gamma = \art(\mathcal G)$. Then we have
  \[
  s_n(\Gamma) \le \Pi(n)^{\alpha(\mathcal G)} \cdot (n \cdot (n!)^{\alpha(G)-1})
  \]
  for all $n \ge 1$, where $\Pi(n)$ denotes the number of partitions of an integer $n$. 
\end{prp}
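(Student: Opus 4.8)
The plan is to bound $s_n(\Gamma)$ by $h_n(\Gamma)/(n-1)!$ and then to estimate $h_n(\Gamma) = \card{\Hom(\Gamma, \sym_n)}$ directly using the bipartite structure together with König's theorem. Write $\mathcal G = A \sqcup B$ with $A,B$ independent sets, and recall König's theorem: in a bipartite graph the maximum size of a matching equals the minimum size of a vertex cover. Since a vertex cover is the complement of an independent set, this gives $|\mathcal G| - \alpha(\mathcal G) = \nu(\mathcal G)$, the matching number; equivalently, $\mathcal G$ has a matching $M$ of size $|\mathcal G| - \alpha(\mathcal G)$ together with $\alpha(\mathcal G)$ unmatched vertices, and every edge of $\mathcal G$ meets $M$. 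The idea is to build a homomorphism by choosing the images $\rho(\sigma_v) \in \sym_n$ one vertex at a time along a spanning forest of $\mathcal G$ adapted to $M$, so that at each step the choice is constrained by at most one already-chosen commuting partner.

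More precisely, first I would note that we may delete from $\mathcal G$ all edges not belonging to the matching $M$ — removing relations only enlarges $\Hom(\cdot,\sym_n)$, and König says that the resulting graph $\mathcal G'$ is a disjoint union of $\nu = |\mathcal G|-\alpha$ single edges together with $\alpha$ isolated vertices. Wait — that is too lossy in general, since an edge not in $M$ still imposes a genuine commutation relation; but for the purpose of an \emph{upper} bound on $h_n$ it is exactly what we want, because $\Hom(\art(\mathcal G),\sym_n) \subseteq \Hom(\art(\mathcal G'),\sym_n)$. Then $h_n(\art(\mathcal G)) \le h_n(\art(\mathcal G')) = h_n(\ZZ^2)^{\nu} \cdot h_n(\text{free on }\alpha\text{ gens}) \cdot$ — no, more carefully: $\art(\mathcal G')$ is a free product of $\nu$ copies of $\ZZ^2$ and a free group of rank $\alpha$, so a homomorphism is just an independent choice of a commuting pair for each $\ZZ^2$ factor and an arbitrary permutation for each free generator. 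The number of commuting pairs in $\sym_n$ is $\Pi(n)\cdot n!$ (as recalled in the introduction for the $2r$-gon example: an element and the centraliser count, summed, equals $\Pi(n) \, n!$), so $h_n(\art(\mathcal G')) = (\Pi(n)\, n!)^{\nu} \cdot (n!)^{\alpha}$.

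Putting this together, $s_n(\Gamma) \le h_n(\Gamma)/(n-1)! \le \Pi(n)^{\nu}\,(n!)^{\nu+\alpha}/(n-1)! = \Pi(n)^{\nu}\, n\, (n!)^{\nu+\alpha-1}$. This is not yet the claimed bound, since $\nu = |\mathcal G|-\alpha$ can exceed $\alpha$. So the naive "delete non-matching edges" step is too weak, and the real content must be to retain enough commutation relations: in each matched edge's component one should use that the $\nu$ matched vertices of $A$ (say) are pairwise \emph{non}-adjacent, hence their images are unconstrained among themselves, but each is forced to commute with its partner in $B$; reversing roles, one sees the count is governed by $\alpha$, not $\nu$. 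The clean way: choose the $\alpha$ independent vertices' images freely ($(n!)^\alpha$ ways), and observe that every remaining vertex $w$ lies in the cover $\mathcal G\setminus S$ where $S$ is a maximum independent set, so $w$ has a neighbour in $S$; processing the cover vertices, each new $\rho(\sigma_w)$ must commute with an already-fixed permutation, contributing a factor $\le \max_{g}|\cent(g)| \cdot$ (number of conjugacy-class choices) $\le \Pi(n)$ — here one uses that the number of elements commuting with a \emph{fixed} $g$, summed appropriately, is controlled, giving a factor $\Pi(n)$ per cover vertex is still wrong dimensionally.

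The honest main obstacle, then, is precisely this bookkeeping: getting the exponent of $n!$ down to $\alpha-1$ rather than $|\mathcal G|-1$. The resolution I expect the authors use: apply König to get a vertex cover $U$ of size $|\mathcal G|-\alpha$ and a \emph{perfect matching of $U$ into $S = \mathcal G\setminus U$} (this is the defect form of König/Hall giving a matching saturating $U$). Then enumerate $\rho$ by: for each $u\in U$ first choose the \emph{cycle type} of $\rho(\sigma_u)$ — $\Pi(n)$ choices, total $\Pi(n)^{|U|} = \Pi(n)^{\alpha}$ if $|U|=\alpha$, which forces the bipartition class sizes to match — hmm, $|U| = |\mathcal G|-\alpha$ again. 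I think in fact the statement's exponent $\Pi(n)^{\alpha(\mathcal G)}$ should be read with the understanding that after removing a maximum independent set one is left with a graph on $|\mathcal G|-\alpha$ vertices to which one induces, and a cleverer accounting — choosing for the $\alpha$ free vertices their permutations ($(n!)^\alpha$, contributing $n(n!)^{\alpha-1}$ after dividing by $(n-1)!$) and for the $\le\alpha$ "matched" cover vertices in the other class only their \emph{centraliser-constrained} images, each costing a $\Pi(n)$ factor via the partition bound $|\cent_{\sym_n}(g)| \le$ (product of $m_i!\, i^{m_i}$) $\le$ something summing to $\Pi(n)\cdot(\text{stuff})$ — does it. I would structure the final proof as: (1) König $\Rightarrow$ matching $M$ saturating $\mathcal G\setminus S$ with $|M| \le \alpha$; here is the crux, that a bipartite graph's vertex cover, matched into the independent set, has the two relevant classes both of size $\le\alpha$, which holds when we additionally pass to the subgraph spanned by $S$ and its matched partners; (2) count $\rho$ by free choices on $S$ and centraliser-constrained choices on the partners, each partner costing $\le |\cent(\rho(\sigma_{\text{its partner}}))|$ which is $\le \Pi(n)\cdot(n!)/n!$... — the cleanest rigorous bound being $\sum_{g\in\sym_n}|\cent(g)| = \Pi(n)\,n!$, hence "a partner vertex costs a factor $\Pi(n)$ in the worst case after absorbing an $n!$"; (3) assemble $h_n(\Gamma) \le (n!)^\alpha \cdot (\Pi(n)\,n!)^{\#\text{partners}}$ with $\#\text{partners}\le\alpha$, divide by $(n-1)!$. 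The single step I expect to be genuinely delicate is (1): extracting from König's theorem a matching whose \emph{both} endpoint-classes have size at most $\alpha(\mathcal G)$, so that the $\Pi(n)$ exponent is $\alpha$ and the $(n!)$ exponent comes out exactly $\alpha-1$ after the division.
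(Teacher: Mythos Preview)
Your approach is exactly the paper's approach --- delete the edges outside a maximum matching and bound $h_n$ of the resulting free product --- but you make two concrete slips that cause you to abandon it prematurely.

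First, the count of unmatched vertices is off. A matching of size $\nu = |\mathcal G| - \alpha(\mathcal G)$ saturates $2\nu$ vertices, so the number of isolated vertices in $\mathcal G'$ is $|\mathcal G| - 2\nu = 2\alpha(\mathcal G) - |\mathcal G|$, not $\alpha(\mathcal G)$. With the correct count,
\[
h_n(\art(\mathcal G')) = (\Pi(n)\, n!)^{\nu} \cdot (n!)^{2\alpha(\mathcal G) - |\mathcal G|}
= \Pi(n)^{\nu} \cdot (n!)^{\,\nu + 2\alpha(\mathcal G) - |\mathcal G|}
= \Pi(n)^{\nu} \cdot (n!)^{\alpha(\mathcal G)},
\]
since $\nu = |\mathcal G| - \alpha(\mathcal G)$. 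So the $(n!)$-exponent is already $\alpha(\mathcal G)$, not $\nu + \alpha$.

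Second, your worry that ``$\nu$ can exceed $\alpha$'' is unfounded precisely because $\mathcal G$ is bipartite: each colour class is independent, so $\alpha(\mathcal G) \ge |\mathcal G|/2$, hence $\nu = |\mathcal G| - \alpha(\mathcal G) \le \alpha(\mathcal G)$. This is the one observation beyond K\"onig that the proof needs, and it immediately gives $h_n(\Gamma) \le \Pi(n)^{\alpha(\mathcal G)} (n!)^{\alpha(\mathcal G)}$; dividing by $(n-1)!$ finishes. All the subsequent attempts to process vertices one at a time via centraliser bounds are unnecessary (and, as you noticed, don't close).
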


It is well-known that $\log\Pi(n) \sim c\sqrt n$ (see for example \cite{erdos_partitions}) so the upper bound above is much sharper than the general one we obtain in the proof of Theorem \ref{Main_artin}.  In particular, it follows from Dixon's theorem \cite{Dixon_generating} that the bound is saturated (as $n\to\infty$) by all RAAGs of the form $\ZZ^2 * \ldots * \ZZ^2$.

\begin{proof}
  We will prove that $h_n(\Gamma) \le \Pi(n)^{\alpha(\mathcal G)} \cdot (n!)^{\alpha(G)}$. The proof is purely graph-theoretical, we will cover $\mathcal G$ by a union of $\alpha(\mathcal G)$ vertices and edges and the use the fact that $h_n(\ZZ^2) = n!\Pi(n)$. 

  For this we use K\"onig's theorem. Before stating it we need some terminology (which won't be used in the rest of the paper). A \emph{matching} in a graph $\mathcal G=(V,E)$ is a set of edges $\mathcal M \subset E$ so that
  \[
  e \cap e' = \emptyset \quad \forall e, e' \in \mathcal M.
  \]
  A \emph{maximal matching} in a graph is a matching with the maximal number of edges among all matching in that graph. Let us write $\mu(\mathcal G)$ for the number of edges in a maximal matching in $\mathcal G$.

  A \emph{vertex cover} of a graph $\mathcal G=(V,E)$ is a set of vertices $\mathcal{C}\subset V$ so that for all $e\in E$ there exists an $v\in\mathcal{C}$ so that $v\in e$. A vertex cover is called \emph{minimal} if it contains a minimal number of vertices among all vertex covers of a graph. We will write $\nu(\mathcal G)$ for the number of vertices in a minimal vertex cover of $\mathcal G$. Because every vertex cover is complementary to an independent set, we have
  \[
  \nu(\mathcal G) + \alpha(\mathcal G) = \card{\mathcal G}.
  \]
  K\"onig's theorem then states that :
  \[
  \mu(\mathcal G) = \nu(\mathcal G).
  \]
  and so we can find a matching of $\card{\mathcal G} - \alpha(\mathcal G)$ edges in $\mathcal G$. These edges account for $2(\card{\mathcal G} - \alpha(\mathcal G))$ vertices of $\mathcal G$. As a result, there are $2\alpha(\mathcal G) - \card{\mathcal G}$ vertices that are not a part of this matching.  So, we obtain a surjection
  \[
  \left(\ZZ^2\right)^{*(\card{\mathcal G}-m(\mathcal G))}*F_{2m(\mathcal G)-\card{\mathcal G}} \to \Gamma.
  \]
  This implies that
  \[
  h_n(\Gamma) \leq \left(\Pi(n)\cdot n! \right)^{\card{\mathcal G} - \alpha(\mathcal G)} (n!)^{2\alpha(\mathcal G)-\card{\mathcal G}} = \left(\Pi(n)\right)^{\card{\mathcal G} - \alpha(\mathcal G)}  \cdot\left(n! \right)^{\alpha(\mathcal G)}.
  \]
  Because $\mathcal G$ is bipartite, we have $\alpha(\mathcal G) \geq \card{\mathcal G}/2$. As such: 
  \[
  h_n(\Gamma)  \leq  \left(\Pi(n) \cdot n! \right)^{\alpha(\mathcal G)}
  \]
  which finishes the proof. 
\end{proof}


\subsubsection{Coxeter groups}

We will only briefly comment on this case. The proof above adapts {\em verbatim} to Coxeter groups, now using M\"uller's result on the homomorphism growth rate of finite groups \cite{Mul2}. However the bound obtained is not sharp,  because unlike in the case of Artin groups, $h_n((\ZZ/2\ZZ)^2)$ does not have the same factorial growth rate as $h_n(\ZZ/2\ZZ)$. We obtain the following statement.

\begin{prp}
  Let $\mathcal G$ be a bipartite graph and $\Gamma = \cox(\mathcal G)$. Then
  \[
  \limsup_{n\to\infty} \frac{\log(s_n(\Gamma))}{n \log(n)}  \leq (\alpha(\mathcal{G}) + \card{\mathcal G})/4. 
  \]
\end{prp}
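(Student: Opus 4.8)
\emph{Plan.} The strategy is to run the argument of Proposition \ref{artin_bipartite} \emph{verbatim}, the only change being that the generators of a right-angled Coxeter group are involutions, so that an edge of the defining graph contributes a copy of $(\ZZ/2\ZZ)^2$ and an isolated vertex a copy of $\ZZ/2\ZZ$, rather than copies of $\ZZ^2$ and $\ZZ$. Concretely: since $\mathcal G$ is bipartite, K\"onig's theorem gives a matching $\mathcal M$ with $\mu(\mathcal G)=\nu(\mathcal G)=\card{\mathcal G}-\alpha(\mathcal G)$ edges. Deleting from $\mathcal G$ every edge not in $\mathcal M$ produces a graph $\mathcal G'$ on the same vertex set which is the disjoint union of $\card{\mathcal G}-\alpha(\mathcal G)$ single edges and $2\alpha(\mathcal G)-\card{\mathcal G}$ isolated vertices (the latter count is $\ge 0$ because $\mathcal G$ bipartite forces $\alpha(\mathcal G)\ge\card{\mathcal G}/2$). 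As $\mathcal G'$ has the same vertices and fewer edges than $\mathcal G$ there is a surjection $\cox(\mathcal G')\twoheadrightarrow\cox(\mathcal G)$, and since the Coxeter group of a disjoint union of graphs is the free product of the Coxeter groups of the pieces,
\[
\cox(\mathcal G')\;\cong\;\bigl((\ZZ/2\ZZ)^2\bigr)^{*(\card{\mathcal G}-\alpha(\mathcal G))}\,*\,(\ZZ/2\ZZ)^{*(2\alpha(\mathcal G)-\card{\mathcal G})}.
\]

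Next I would use that $h_n$ is monotone along surjections and multiplicative along free products (a homomorphism out of a free product is just a tuple of homomorphisms out of the factors), to get
\[
h_n(\cox(\mathcal G))\;\le\;h_n(\cox(\mathcal G'))\;=\;h_n\bigl((\ZZ/2\ZZ)^2\bigr)^{\card{\mathcal G}-\alpha(\mathcal G)}\cdot h_n(\ZZ/2\ZZ)^{2\alpha(\mathcal G)-\card{\mathcal G}}.
\]
Here the key input — replacing the identity $h_n(\ZZ^2)=n!\,\Pi(n)$ used in the Artin case — is M\"uller's computation of the homomorphism growth rate of finite groups \cite{Mul2}: for a finite group $G$ one has $\log h_n(G)=(1-1/\card{G})\,n\log n+o(n\log n)$, so in particular $\log h_n((\ZZ/2\ZZ)^2)=\tfrac34 n\log n+o(n\log n)$ (for $\ZZ/2\ZZ$ this is just the classical count of involutions, giving $\tfrac12 n\log n+o(n\log n)$). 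Substituting and using $\card{\mathcal G}-\alpha(\mathcal G)=\mu(\mathcal G)$,
\[
\limsup_{n\to\infty}\frac{\log h_n(\cox(\mathcal G))}{n\log n}\;\le\;\tfrac34\bigl(\card{\mathcal G}-\alpha(\mathcal G)\bigr)+\tfrac12\bigl(2\alpha(\mathcal G)-\card{\mathcal G}\bigr)\;=\;\frac{\alpha(\mathcal G)+\card{\mathcal G}}{4},
\]
and the claimed bound follows from $s_n(\Gamma)\le h_n(\Gamma)$ (in fact $s_n=t_n/(n-1)!\le h_n/(n-1)!$ gives the same bound with an extra $-1$, although the statement as phrased does not require this).

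\emph{Main obstacle.} There is essentially no difficulty beyond citing M\"uller's theorem: the graph-theoretic part (K\"onig, the free-product description of $\cox$ of a disjoint union, monotonicity of $h_n$) is immediate. The one point to be careful about is that the estimate genuinely needs the sharp exponent $1-1/\card{G}$ for $h_n(G)$ rather than a naive product bound — e.g. $h_n((\ZZ/2\ZZ)^2)\le h_n(\ZZ/2\ZZ)^2$ only yields exponent $\le 1$ and would give nothing useful. It is also precisely this discrepancy (the rate $\tfrac34$ for $(\ZZ/2\ZZ)^2$ versus $\tfrac12$ for $\ZZ/2\ZZ$, in contrast with the common rate $1$ for $\ZZ^2$ and $\ZZ$ in the Artin case) that makes the resulting bound non-sharp, unlike Proposition \ref{artin_bipartite}.
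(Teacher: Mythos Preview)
Your proposal is correct and is exactly the argument the paper has in mind: it explicitly says the proof of Proposition~\ref{artin_bipartite} ``adapts \emph{verbatim} to Coxeter groups, now using M\"uller's result on the homomorphism growth rate of finite groups,'' and your write-up carries this out, including the observation about why the bound fails to be sharp.
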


Note that the bound above is phrased in terms of the independence number $\alpha(\mathcal G)$ rather than $\gamma(\mathcal G)$, which we expect to be the invariant that determines the limit we are after. Because $\gamma(\mathcal G) \geq \alpha(\mathcal G)/2$, we could rephrase the bound above in terms of $\gamma(\mathcal{G})$. However, this makes the bound strictly weaker: there are many bipartite graphs for which $\gamma(\mathcal G) > \alpha(\mathcal G)/2$.


\subsection{An elementary proof for complete graphs}\label{sec_cpltegraphs}

Let $r\ge 1$, $\mathcal K_r$ the complete graph on $r$ vertices and $\Gamma = \cox(\mathcal K_r) \cong (\ZZ/2\ZZ)^r$. In this subsection we give a short combinatorial proof of the following fact (which of course follows from M\"uller's much more precise and general result): 
\begin{equation} \label{complete_graphs_rough}
  \lim_{n \to +\infty} \frac{\log h_n(\Gamma)}{n\log n} = 1 - \frac 1{2^r}. 
\end{equation}

For the lower bound we use the left-action of $\Gamma$ on itself, which gives a morphism $\Gamma \to \sym_{2^r}$. By a diagonal embedding, for any $n \ge 2^r$ this gives an embedding
\[
\phi : \Gamma \to (\sym_r)^{\floor{\frac n {2^r}}} \to \sym_n
\]
whose centraliser is
\[
Z_{\sym_n}(\phi(\Gamma)) = \left(\sym_{\floor{\frac n {2^r}}} \wr \Gamma \right) \times \sym_{n - 2^r\floor{\frac n {2^r}}}. 
\]
We see that
\[
\left| Z_{\sym_n}(\phi(\Gamma)) \right| \le  \floor{\frac n {2^r}} ! \cdot 2^{r \cdot n/2^r}\cdot (2^r)!.
\]
Hence it follows that the conjugates of $\phi$ give up to an at most exponential factor $\left( (1-2^{-r})n\right)!$ pairwise distinct representations of $\Gamma$ into $\sym_n$.

Now we prove the upper bound. We recall the notation for sets of involutions :
\begin{gather*}
  \inv_{n, k} = |\{ \sigma \in \sym_n : \sigma^2 = \mathrm{Id}, |\mathrm{fix}(\sigma)| = n - 2k \}| ; \\
  \inv_n = \bigcup_{k=0}^{\floor{n/2}} \inv_{n, k}. 
\end{gather*}
Likewise, we use $\inv(X)$ to denote the involutions on a set $X$. Given $U\subset \sym_n$, we will denote the set of orbits of $\langle U\rangle < \sym_n$ on $\{1,\ldots,n\}$ by $\orb(U)$. We will use the following two lemmas.

\begin{lem}\label{lem_orbits}
  Let $k,n\in\NN$ and $U\subset \sym_n$. Then
  \[
  \card{\st{\pi\in Z_{\inv_n}(U)}{\begin{array}{c} \text{the action of }\pi \text{ on } \orb(U) \\ \text{ has }k\text{ orbits of size }2\end{array} }} \leq 2^n \card{\inv_{\card{\orb(U)},k}}.
  \]
\end{lem}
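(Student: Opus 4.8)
The plan is to fix $U \subset \sym_n$, write $m = \card{\orb(U)}$, and count the involutions $\pi \in Z_{\inv_n}(U)$ whose induced action on $\orb(U)$ has exactly $k$ orbits of size $2$. The first step is to observe that since $\pi$ commutes with $\langle U \rangle$, it permutes the orbits of $\langle U \rangle$ on $\{1,\dots,n\}$, giving a map $Z_{\inv_n}(U) \to \inv(\orb(U))$; moreover a permutation commuting with $\langle U\rangle$ can only send an orbit $O$ to an orbit $O'$ of the same cardinality (indeed it conjugates the action of $\langle U\rangle$ on $O$ to that on $O'$). So the image of $\pi$ in $\inv(\orb(U))$ is an involution with $k$ transpositions, contributing the factor $\card{\inv_{m,k}}$.

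The second step is to bound, for a fixed involution $\bar\pi$ of $\orb(U)$ with $k$ transpositions, the number of $\pi \in Z_{\inv_n}(U)$ lying above it. For each fixed orbit $O$ of $\bar\pi$, $\pi$ restricts to an involution of $O$ commuting with the transitive action of $\langle U\rangle|_O$; the number of such involutions is at most $\card{O}$ (they form a subset of the centraliser of a transitive group, which acts freely, so is bounded by $\card{O}$), and in fact even a cruder bound $2^{\card{O}}$ suffices. For each transposed pair $\{O, O'\}$ with $\card O = \card{O'} =: d$, the restriction of $\pi$ is determined by a single equivariant bijection $O \to O'$ (the part $O' \to O$ being forced by $\pi^2 = \id$), and the number of $\langle U\rangle$-equivariant bijections between two transitive $\langle U\rangle$-sets of size $d$ is again at most $d \le 2^d$. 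Multiplying over all orbits and pairs, the number of lifts of $\bar\pi$ is at most $\prod_{O} 2^{\card O} = 2^{\sum_O \card O} = 2^n$, where the product/sum runs over a set of orbits hitting each of $\{1,\dots,n\}$ exactly once (one representative from each $\bar\pi$-transposed pair, all the fixed ones).

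Combining the two steps gives the claimed inequality
\[
\card{\st{\pi\in Z_{\inv_n}(U)}{\text{$\pi$ has $k$ orbits of size $2$ on }\orb(U)}} \leq 2^n \card{\inv_{m,k}}.
\]
The only mildly delicate point is the bookkeeping of which $2^{\card O}$ factors one is allowed to take so that the exponents sum to $n$ rather than something larger — one must take one representative per transposed pair, not both — but this is purely combinatorial. I expect no real obstacle here; the lemma is an elementary centraliser/orbit-counting estimate, and the factor $2^n$ is deliberately wasteful so that one need not be careful about the precise count of equivariant self-maps of a transitive set.
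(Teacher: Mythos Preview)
Your proposal is correct and follows essentially the same approach as the paper: map $\pi$ to its induced involution on $\orb(U)$ and bound the fiber sizes using that $\langle U\rangle$ acts transitively on each orbit (so centralisers/equivariant bijections have size at most the orbit size, hence $\prod_o |o|\le 2^n$). The only cosmetic difference is that the paper bounds every fiber uniformly by the order of the kernel $K=\ker\bigl(Z_{\sym_n}(U)\to\sym(\orb(U))\bigr)\subset\prod_{o}Z_{\sym(o)}(U)$ and then applies Lemma~\ref{lem_transitive}, whereas you split into fixed orbits and transposed pairs and analyse each explicitly; both routes arrive at the same $2^n$ factor.
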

\begin{proof}
  Write
  \[
  K = \ker\left( Z_{\inv_n}(U) \to \sym(\orb(U)) \right) < \prod_{o\in \orb(U)} Z_{\sym(o)}(U),
  \]
  so that the cardinality we are after has size $\card{K} \cdot \card{\inv_{\card{\orb(U)},k}}$. Because $\langle U\rangle $ acts transitively on each $o\in \orb(U)$, Lemma \ref{lem_transitive} applies, so  
  \[
  \card{K} \leq \prod_{o\in \orb(U)} \card{o} \leq 2^n,
  \]
  from which the lemma follows.
\end{proof}

 The proof of the following we leave to the reader.
\begin{lem}\label{lem_invcount}
  Let $k,n\in\NN$ so that $k\leq n/2$. Then
  \[
  \card{\inv_{n,k}} \leq 2^n\cdot n^k.
  \]
\end{lem}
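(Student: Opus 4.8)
The estimate is a routine counting bound, so I will only sketch the computation. First I would write down the exact formula for the number of fixed-point-restricted involutions. An element of $\inv_{n,k}$ is determined by the choice of its support — the set of $2k$ non-fixed points — together with a partition of that support into $k$ (unordered) pairs, which become the transpositions of the involution. Since the number of perfect matchings of a $2k$-element set is the double factorial $(2k-1)!! = (2k-1)(2k-3)\cdots 3\cdot 1 = \frac{(2k)!}{2^k\,k!}$, this gives
\[
\card{\inv_{n,k}} = \binom{n}{2k}\,(2k-1)!!.
\]

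Next I would bound the two factors separately. For the binomial coefficient, $\binom{n}{2k}$ is one of the $n+1$ summands in $\sum_{j=0}^{n}\binom{n}{j} = 2^n$, so $\binom{n}{2k}\le 2^n$. For the double factorial, it is a product of exactly $k$ positive integers, each of which is at most $2k-1$; and the hypothesis $k\le n/2$ forces $2k-1\le n-1<n$, so $(2k-1)!!\le n^k$. Multiplying the two bounds yields $\card{\inv_{n,k}}\le 2^n\cdot n^k$, which is the claim.

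The only point that needs the slightest attention is the role of the hypothesis $k\le n/2$: it is exactly what guarantees $2k-1<n$, so that every factor of the double factorial can be crudely replaced by $n$ (and of course $\inv_{n,k}$ is empty otherwise). Beyond that there is no obstacle; one could even afford to be much more wasteful, e.g.\ bounding $(2k-1)!!\le (2k)^k$ and noting $2k\le n$, and still land well within $2^n n^k$.
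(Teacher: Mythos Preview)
Your proof is correct. The paper itself does not give a proof of this lemma --- it explicitly leaves it to the reader --- so there is nothing to compare against; your argument via $\card{\inv_{n,k}} = \binom{n}{2k}(2k-1)!!$ together with the crude bounds $\binom{n}{2k}\le 2^n$ and $(2k-1)!!\le n^k$ is exactly the kind of routine computation the authors had in mind.
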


This will allow us to prove the following upper bound. 

\begin{prp}\label{prp_cpltegraphsmeth3}
  Let $r,n\in \NN$, and $\Gamma = \cox(\mathcal K_r)$we have
  \[
  h_n(\Gamma) \leq 4^{rn}\cdot n^r\cdot n^{\left(1-2^{-r}\right)n}.
  \] 
\end{prp}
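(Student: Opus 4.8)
The plan is to count homomorphisms $\rho: \Gamma = \cox(\mathcal K_r) \to \sym_n$ by building up the images of the generators $\sigma_1, \ldots, \sigma_r$ one at a time, using at each step the fact that every $\rho(\sigma_j)$ is an involution commuting with all the previous ones, so that $\rho(\sigma_j) \in Z_{\inv_n}(U_{j-1})$ where $U_{j-1} = \{\rho(\sigma_1), \ldots, \rho(\sigma_{j-1})\}$. The key bookkeeping device is the orbit set $\orb(U_{j-1})$: by Lemma \ref{lem_orbits}, once the first $j-1$ generators are fixed, the number of choices for $\rho(\sigma_j)$ with exactly $k$ size-$2$ orbits on $\orb(U_{j-1})$ is at most $2^n \card{\inv_{\card{\orb(U_{j-1})}, k}}$, and summing over $k$ and applying Lemma \ref{lem_invcount} this is at most $2^n \cdot \sum_k 2^{|\orb(U_{j-1})|} |\orb(U_{j-1})|^k \le 4^n \cdot (|\orb(U_{j-1})|+1) \cdot |\orb(U_{j-1})|^{\lfloor |\orb(U_{j-1})|/2 \rfloor}$, crudely $\le 4^n \cdot n \cdot n^{|\orb(U_{j-1})|/2}$.

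The heart of the argument is then to control how fast $|\orb(U_j)|$ shrinks. Adding the generator $\sigma_j$ refines nothing but merges orbits: $\orb(U_j)$ is the orbit set of the action of $\langle \rho(\sigma_j)\rangle$ on $\orb(U_{j-1})$, so $|\orb(U_j)| = |\orb(U_{j-1})| - (\text{number of size-}2\text{ orbits of }\rho(\sigma_j)\text{ on }\orb(U_{j-1}))$. I would argue that if one insists on keeping $|\orb(U_r)|$ large — say the final orbit count is $m$ — then at every intermediate stage $|\orb(U_j)| \ge m$, so the per-step bound $n^{|\orb(U_{j-1})|/2}$ is itself controlled, but the product telescopes against the shrinking, and the dominant contribution is governed by $\sum_{j} |\orb(U_{j-1})|/2$. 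The cleanest route is to note $|\orb(U_{j-1})| \le n/2^{j-1}$ is false in general, but $|\orb(U_0)| = n$ and each generator acting as an involution on a set of size $|\orb(U_{j-1})|$ on which $\langle U_{j-1}\rangle$ acts with transitive blocks can at best halve when it has no fixed orbits; more honestly, I would bound $\sum_{j=1}^r |\orb(U_{j-1})|$ directly. Since $|\orb(U_{j})| \le |\orb(U_{j-1})|$ and the total drop $\sum_j (|\orb(U_{j-1})| - |\orb(U_j)|) = n - |\orb(U_r)|$, one gets $\sum_{j=1}^{r} |\orb(U_{j-1})|/2$ maximized — subject to $|\orb(U_r)| \ge$ whatever — when the drops happen as late as possible; optimizing the resulting geometric-type sum yields the exponent $(1 - 2^{-r})n$, matching the lower bound.

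Concretely, I would organize the count as: fix first the sequence of orbit-count values $n = m_0 \ge m_1 \ge \cdots \ge m_r$, bound the number of such sequences by $(n+1)^r$ (absorbed into the $n^r$ factor), and for each fixed sequence bound the number of homomorphisms realizing it by $\prod_{j=1}^r \big(4^n \cdot m_{j-1}^{(m_{j-1}-m_j)/2}\big)$ using a sharper form of Lemma \ref{lem_orbits} that pins down the number of size-$2$ orbits to exactly $m_{j-1}-m_j$. The product of the $4^n$ factors gives $4^{rn}$; the remaining product $\prod_j m_{j-1}^{(m_{j-1}-m_j)/2}$ is at most $n^{\frac12 \sum_j (m_{j-1}-m_j)}$... but that only gives exponent $n/2$, which is too weak for $r \ge 2$ — so I must instead keep the base $m_{j-1}$ rather than replacing it by $n$, and solve the optimization $\max \sum_j (m_{j-1}-m_j)\log m_{j-1}$ over decreasing sequences; the maximum is attained at $m_j = n/2^j$ (so that $m_{j-1}-m_j = n/2^j$ and $\log m_{j-1} = \log(n/2^{j-1}) \approx \log n$), giving total exponent $\sum_{j=1}^r (n/2^j) = (1-2^{-r})n$ times $\log n$. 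The main obstacle is precisely this last optimization step: verifying rigorously that no decreasing sequence $m_0 \ge \cdots \ge m_r$ beats the dyadic one, which I expect to handle by a short convexity/rearrangement argument or an explicit induction on $r$ showing $\prod_j m_{j-1}^{(m_{j-1}-m_j)/2} \le n^r \cdot n^{(1-2^{-r})n/2}$ (the extra $n^r$ again harmlessly absorbed).
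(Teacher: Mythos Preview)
Your framework is the paper's: both build up $\rho(\sigma_1),\ldots,\rho(\sigma_r)$ one at a time, track $m_j=\card{\orb(U_j)}$, and feed Lemma~\ref{lem_orbits} into Lemma~\ref{lem_invcount}. The divergence is only in the endgame, and there you have a slip that matters.

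When you pin down the number of size-$2$ orbits to be exactly $k_j=m_{j-1}-m_j$, Lemma~\ref{lem_invcount} gives $\card{\inv_{m_{j-1},k_j}}\le 2^{m_{j-1}}\,m_{j-1}^{\,k_j}$, so the per-step bound is $4^n\,m_{j-1}^{\,m_{j-1}-m_j}$, \emph{not} $4^n\,m_{j-1}^{(m_{j-1}-m_j)/2}$. This is not a harmless typo: with your halved exponent the naive bound $m_{j-1}\le n$ would yield an ``upper bound'' of order $n^{n/2}$, which for $r\ge 2$ is \emph{smaller} than the true growth $n^{(1-2^{-r})n}$ and hence false. (That is also why your remark ``too weak for $r\ge 2$'' is backwards.)

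Once the exponent is corrected, the optimization you worry about dissolves, and the missing ingredient is simply the constraint you allude to but never invoke: an involution on $m_{j-1}$ points has at most $\lfloor m_{j-1}/2\rfloor$ two-cycles, so $k_j\le m_{j-1}/2$, i.e.\ $m_j\ge m_{j-1}/2$. Iterating gives $m_r\ge n/2^r$, hence
\[
\sum_{j=1}^r k_j \;=\; m_0-m_r \;=\; n-m_r \;\le\; \bigl(1-2^{-r}\bigr)n.
\]
Now just bound every base $m_{j-1}$ by $n$: the product is at most $n^{\sum_j k_j}\le n^{(1-2^{-r})n}$, with the $(n+1)^r$ count of sequences absorbed into $n^r$. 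No convexity or rearrangement argument is needed. The paper does exactly this, phrased as repeatedly bounding the innermost sum $\sum_{k_j\le m_{j-1}/2} n^{k_j}$ by $n\cdot n^{m_{j-1}/2}$ and iterating $r$ times.
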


\begin{proof} 
Let us once and for all label the vertices of $\mathcal K_r$ by the numbers $1,\ldots,r$. The idea of the proof is to order homomorphisms according to the number of $2$-cycles the $j^{th}$ generator has on the orbits of the first $j-1$ generators. To this end, let us write $\pi_0=e\in\sym_n$ and $\pi_i$ for the image of the generator corresponding to vertex $i$ under our homomorphism. Moreover, we write $k_i$ for the number of $2$-cycles of the image of $\pi_i$ in $\inv(\orb(\pi_0,\ldots,\pi_{i-1}))$. 

The crucial observation is that 
\[\card{\orb(\pi_0,\ldots,\pi_{i})} = \card{\orb(\pi_0,\ldots,\pi_{i-1})} - k_i. \]
Using Lemma \ref{lem_orbits} together with with this observation we obtain
\[h_n(\cox(\mathcal K_r)) \leq 2^{rn} \sum_{k_1=1}^{\floor{n/2}} \card{\inv_{n,k_1}} \sum_{k_2=1}^{\floor{(n-k_1)/2}} \card{\inv_{n-k_1,k_2}} \cdots \sum_{k_r=1}^{\floor{(n-k_1-\ldots-k_{r-1})/2}} \card{\inv_{n-k_1-\ldots -k_{r-1},k_r}}. \]
Now we use Lemma \ref{lem_invcount} together with the fact that $n-k_1-\ldots -k_{j}\leq n$ we get
\[h_n(\cox(\mathcal K_r))  \leq  4^{rn} \sum_{k_1=1}^{\floor{n/2}} n^{k_1} \sum_{k_2=1}^{\floor{(n-k_1)/2}} n^{k_2} \cdots \sum_{k_r=1}^{\floor{(n-k_1-\ldots-k_{r-1})/2}} n^{k_r}. \]
Bounding the innermost sum by its largest term, we get
\begin{eqnarray*}
h_n(\cox(\mathcal K_r))  & \leq &  4^{rn}\cdot n \sum_{k_1=1}^{\floor{n/2}} n^{k_1} \sum_{k_2=1}^{\floor{(n-k_1)/2}} n^{k_2} \cdots \sum_{k_r=1}^{\floor{(n-k_1-\ldots-k_{r-1})/2}} n^{k_{r-1}}  n^{(n-k_1-\ldots-k_{r-1})/2}  \\ 
& = & 4^{rn}\cdot n\cdot  \sum_{k_1=1}^{\floor{n/2}} \sum_{k_2=1}^{\floor{(n-k_1)/2}}  \cdots \sum_{k_r=1}^{\floor{(n-k_1-\ldots-k_{r-1})/2}}  n^{n/2 + (k_1 + \ldots + k_{r-1})/2}.
\end{eqnarray*}
Repeating this estimate another $r-1$ times, we obtain
\[ h_n(\cox(\mathcal K_r))  \leq  4^{rn}\cdot n^r \cdot n^{n\cdot \sum_{j=1}^r 2^{-j}} =  4^{rn}\cdot n^r\cdot n^{\left(1-2^{-r}\right)n}. \]
\end{proof}


\subsection{Trees}

The same basic idea we used for complete graphs also allows us to get a bound on the number of permutation representations of a right angled Coxeter group associated to a tree.

\begin{prp} \label{trees_directly}
  Let $\mathcal T$ be a finite tree. Then
  \[
  h_n(\cox(\mathcal T)) \leq n^{2\cdot \card{\mathcal T}}\cdot 4^{\card{\mathcal T}\cdot n} \cdot n^{\gamma(\mathcal T)\cdot n}. 
  \]
\end{prp}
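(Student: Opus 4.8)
The plan is to mimic the complete-graph argument (Proposition \ref{prp_cpltegraphsmeth3}) but to process the vertices of $\mathcal T$ in an order adapted to the tree structure, so that each new vertex has at most one already-processed neighbour. Concretely, root $\mathcal T$ at a leaf and list its vertices $v_1, \ldots, v_N$ ($N = \card{\mathcal T}$) in breadth-first order, so that each $v_i$ with $i \ge 2$ has exactly one neighbour $p(i)$ among $v_1, \ldots, v_{i-1}$ (its parent), and commutes with no other $v_j$, $j < i$. Writing $\pi_i$ for the image of the generator $\sigma_{v_i}$ under a homomorphism $\rho\colon\cox(\mathcal T)\to\sym_n$, the element $\pi_i$ is an involution that commutes with $\pi_{p(i)}$ but is otherwise unconstrained relative to $\pi_1,\ldots,\pi_{i-1}$. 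The key quantity to track is $k_i$, the number of $2$-cycles of the action of $\pi_i$ on the orbit set $\orb(\pi_{p(i)})$ (for $i=1$ we just take $k_1$ to be the number of $2$-cycles of $\pi_1$ on $\{1,\ldots,n\}$).

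The first step is the counting bound. Since $\langle\pi_{p(i)}\rangle$ acts transitively on each of its orbits, Lemma \ref{lem_orbits} applies (with $U=\{\pi_{p(i)}\}$) and shows that, given $\pi_{p(i)}$, the number of choices for $\pi_i$ with exactly $k_i$ two-cycles on $\orb(\pi_{p(i)})$ is at most $2^n\card{\inv_{\card{\orb(\pi_{p(i)})},k_i}} \le 2^n \cdot 2^n \cdot n^{k_i} = 4^n n^{k_i}$ by Lemma \ref{lem_invcount}. Multiplying over all $N$ vertices gives
\[
h_n(\cox(\mathcal T)) \;\le\; 4^{Nn}\sum_{k_1,\ldots,k_N} n^{k_1+\cdots+k_N},
\]
where each $k_i$ ranges over $0,\ldots,\lfloor \card{\orb(\pi_{p(i)})}/2\rfloor$. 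The second step is to bound the exponent $k_1+\cdots+k_N$. Here the essential observation is the same as in Proposition \ref{prp_cpltegraphsmeth3}: $\card{\orb(\pi_1,\ldots,\pi_i)} = \card{\orb(\pi_1,\ldots,\pi_{i-1})} - (\text{number of }\pi_i\text{-two-cycles on }\orb(\pi_1,\ldots,\pi_{i-1}))$, and since the action of $\pi_i$ on $\orb(\pi_{p(i)})$ refines its action on the larger quotient $\orb(\pi_1,\ldots,\pi_{i-1})$, we get that $k_i$ dominates the drop in the number of orbits at step $i$, hence $\sum_i k_i \ge n - \card{\orb(\pi_1,\ldots,\pi_N)} \ge n-$ (something), but more usefully it gives a recursive bound allowing us to sum the geometric-type series. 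Processing the children of a fixed vertex $v$ one at a time, each contributes a factor which after bounding the innermost sum by its largest term (as in Proposition \ref{prp_cpltegraphsmeth3}) costs a factor $n$ and halves the remaining "budget" of orbits; iterating over the tree from the leaves up, the total exponent of $n$ coming from the $k_i$'s telescopes to at most $n\cdot w(\mathcal T') $ summed over the relevant clique collection, and one checks this maximum is exactly $\gamma(\mathcal T)\cdot n$, with the $n^{2N}$ accounting for the $N$ factors of $n$ from bounding innermost sums (at most $2N$ such steps in the tree traversal).

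The main obstacle is the bookkeeping in the second step: in Proposition \ref{prp_cpltegraphsmeth3} the "halving" structure is transparent because every generator sees \emph{all} previous orbits, whereas in a tree a generator $\pi_i$ only interacts with $\orb(\pi_{p(i)})$, so the recursion branches and one must argue that the worst case of $\sum k_i$ is governed by a clique collection of $\mathcal T$ — and since $\mathcal T$ has no triangles, every clique is a vertex or an edge, and $w$ of a collection of isolated vertices and edges is (number of components that are vertices)$\cdot\frac12$ + (number of edges)$\cdot\frac34$, so $\gamma(\mathcal T)$ is a matching-type quantity that precisely captures the optimal telescoping. Making the inequality $\sum_i (\text{exponent contributions}) \le \gamma(\mathcal T)\, n$ tight-enough-and-valid for \emph{every} choice of the $k_i$ (not just the optimal one) — i.e. showing the sum over all admissible $(k_1,\ldots,k_N)$ of $n^{\sum k_i}$ is at most $n^N\cdot n^{\gamma(\mathcal T) n}$ — is where the real work lies; I expect it to follow by induction on $\card{\mathcal T}$, peeling off a leaf or a leaf-edge and relating $\gamma(\mathcal T)$ to $\gamma$ of the smaller tree.
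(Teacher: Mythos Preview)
Your proposal has a genuine gap, and it is precisely the one the paper itself flags before giving its proof. The problem is that the single integer $k_i$ --- the number of $2$-cycles of $\pi_i$ acting on $\orb(\pi_{p(i)})$ --- does not determine $\card{\orb(\pi_i)}$, which is what you need to constrain the children of $i$. Concretely, $\pi_i$ may swap the two points of a $2$-orbit of $\pi_{p(i)}$: this is a $2$-cycle of $\pi_i$ on $\{1,\ldots,n\}$ but a \emph{fixed point} of $\pi_i$ on $\orb(\pi_{p(i)})$, hence invisible to $k_i$. Likewise, a single $2$-cycle of $\pi_i$ on $\orb(\pi_{p(i)})$ may correspond to either one or two $2$-cycles on $\{1,\ldots,n\}$, depending on whether it swaps two $1$-orbits or two $2$-orbits. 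So for a child $c$ of $i$ the admissible range of $k_c$ is simply not a function of the $k_j$'s you have recorded, and the iterated sum you wrote down is not well-posed. If you patch it with the weakest available bound $k_c \le (n-k_i)/2$, the optimization already fails on the path $\Lambda_3$: the maximum of $k_1+k_2+k_3$ subject to $k_1\le n/2$, $k_2\le (n-k_1)/2$, $k_3\le (n-k_2)/2$ is $9n/8$, strictly larger than $\gamma(\Lambda_3)\cdot n = n$. (Your aside about $\orb(\pi_1,\ldots,\pi_{i-1})$ does not help either: in a tree those permutations do not all commute, so that orbit set carries no useful structure here.)

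The paper's remedy is to record \emph{two} integers per vertex, $k_{i,1}$ and $k_{i,2}$: the numbers of $2$-cycles of $\pi_i$ on the size-$1$ and size-$2$ orbits of $\pi_{p(i)}$ separately. This makes the actual number of $2$-cycles of $\pi_i$ on $\{1,\ldots,n\}$ (essentially $k_{i,1}+2k_{i,2}$) visible, so the constraints on the children become linear in the recorded data. Bounding $\sum_{i,j} k_{i,j}$ is then a linear program over an explicit polytope $\mathcal X_n(\mathcal T)$, and showing its maximum equals $\gamma(\mathcal T)\cdot n$ takes a separate combinatorial analysis of the vertices of that polytope --- it does not reduce to the iterated halving of the complete-graph case, nor to the one-line leaf-peeling induction you sketch.
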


\begin{proof}
  Let us root the tree $\mathcal T$ at an arbitrary vertex $v_0$. Our root gives us a way to divide $\mathcal T$ into shells. Denoting the graph distance on $\mathcal T$ by $\dist : \mathcal T \times \mathcal T \to \NN$, we define the $i$-th shell by
  \[
  S_i = \st{v\in \mathcal T}{\dist(v,v_0) =i}
  \]
  for all $i=0,\ldots, R$, where $R$ denotes the largest integer so that $S_R\neq \emptyset$.

  We would like to apply Lemma \ref{lem_orbits} in the same way as we did in the case of complete graphs by going through the shells. However, by only recording the number of two-cycles of the vertices in $S_i$ on orbits of the vertices in $S_{i-1}$, we lose track of how many two-cycles the actual involutions have and hence on their numbers of orbits. This means that given only this data for the involutions corresponding to the shell $S_i$, our bounds on the number of choices for the shell $S_{i+1}$ are not sharp enough.

  The solution is to make a slightly more detailed analysis using the same basic idea. Suppose $v\in S_i$ and $w\in S_{i+1}$ are neighbors. Instead of considering $\sigma_w$ as an involution on $\orb(\sigma_v)$, we will consider it as a pair of involutions in
  \[
  \inv(\orb_1(\sigma_v)) \times \inv(\orb_2(\sigma_v)),
  \]
  where $\orb_j(\sigma_v)$ denotes the number of orbits of $\sigma_v$ on $[n]$ size $j$ for $j\ in \{1,2\}$. A similar argument as in Lemma \ref{lem_orbits} shows that
  \[
  \card{\st{\pi\in Z_{\inv_n}(\sigma_v)}{\begin{array}{c} \text{the action of }\pi \text{ on } \orb_j(\sigma_v) \\ \text{ has }k_j\text{ orbits of size }2,\;j=1,2\end{array} }} \leq 2^n \card{\inv_{\card{\orb_1(\sigma_v)},k_1}}\cdot \card{\inv_{\card{\orb_2(\sigma_v)},k_2}}.
  \]
  In fact, the exact value of the number on the left hand side can also easily be computed, but the above is sufficient for us.

  Note that every vertex $v$ has a unique parent $p(v)$: a neighbor in the shell before its own. In what follows we will label the vertices of $\mathcal T$ by the numbers $1,\ldots, r$ so that the root gets labeled $1$. Moreover we will sort the permutation representations of $\cox(\mathcal T)$ according to the numbers $k_{ij}$ of two-cycles of each vertex $i$ on $\orb_j(\sigma_{p(i)})$ for $i=1,\ldots, r$ and $j=1,2$.

  Using Lemma \ref{lem_invcount} together with the inequality above, we obtain that
  \[
  h_n(\cox(\mathcal T)) \leq 4^{rn} \sum_{(k_{ij})_{i,j} \in \mathcal{K}_n(\mathcal T)} n^{k_{1,1} + k_{1,2} + \ldots+k_{r,1}+k_{r,2}}
  \]
  where
  \[
  \mathcal{K}_n(\mathcal T) = \st{(k_{i,j})_{i,j} \in \Mat_{r \times 2}(\NN) }{
    \begin{array}{c}
      0\leq k_{1,1} \leq \floor{\frac{n}{2}},\; k_{1,2} = 0 \\
      0\leq k_{i,1} \leq \floor{\frac{n-2k_{p(i),1} - 4k_{p(i),2}}{2}},\; i=2,\ldots r \\
      0\leq k_{i,2} \leq \floor{\frac{k_{p(i),1} + 2k_{p(i),2}}{2} },\; i=2,\ldots r
    \end{array}
  }
  \]
  and where we have used the simple observation that if $\sigma_i$ has $k_{i,1}$ two-cycles as an involution in $\inv(\orb_1(\sigma_{p(i)}))$ and $k_{i,2}$ two-cycles as an involution in $\inv(\orb_2(\sigma_{p(i)}))$, then it has $k_{i,1} + 2k_{i,2}$ two cycles as an involution in $\inv_n$.

  Write
  \[
  k_{\max}(n) = \max\st{\sum_{i,j} k_{ij}}{(k_{i,j})_{ij}\in \mathcal{K}_n(\mathcal T)}.
  \]
  Since $\card{\mathcal{K}_n(\mathcal T)} \leq n^{2r}$, we obtain
  \[
  h_n(\cox(\mathcal T)) \leq n^{2r} \cdot 4^{rn} \cdot n^{k_{\max}(n)}.
  \]
  All that remains to show is therefore that
  \[
  k_{\max}(n) \leq \gamma(\mathcal T)\cdot n.
  \]
  To this end, we turn the problem of finding $k_{\max}(n)$ into a convex optimization problem in a real vector space. That is, we define the convex polytope
  \[
  \mathcal{X}_n(\mathcal T) = \st{(x_{i,j})_{i,j} \in \Mat_{r \times 2}(\RR) }{
    \begin{array}{c}
      0\leq x_{1,1} \leq \frac{n}{2},\; x_{1,2} = 0 \\
      0\leq x_{i,1} \leq \frac{n-2x_{p(i),1} - 4x_{p(i),2}}{2},\; i=2,\ldots r \\
      0\leq x_{i,2} \leq \frac{x_{p(i),1} + 2x_{p(i),2}}{2} ,\; i=2,\ldots r
    \end{array}
  }
  \]
  and the number 
  \[
  x_{\max}(n) = \max\st{\sum_{i,j} x_{ij}}{(x_{i,j})_{ij}\in \mathcal{X}_n(\mathcal T)}.
  \]
  Clearly $k_{\max}(n)\leq x_{\max}(n)$. 

  Because we are now maximizing a linear function over a compact convex real polytope, the maximum is realized at a vertex of $\mathcal{X}_n(\mathcal T)$. That is, to find our maximum $x_{\max}(n)$ we need only consider the sequences $(x_{i,j})_{i,j}$ that saturate the inequalities that define $\mathcal{X}_n$.
  
  Eventually, we want to prove that $x_{\max}(n) = \gamma(\mathcal T) \cdot n$.
First we claim that, if a sequence $(x_{ij})_{ij}$ saturates the inequalities, then 
\[ x_{i,1} \in \left\{ \begin{array}{ll} 
\{0, n/2\} & \text{if}\;\; x_{p(i),1} + 2 \; x_{p(i),2} = 0 \\[3mm]
\{0\} & \text{if}\;\; x_{p(i),1} + 2 \; x_{p(i),2} = n/2 
\end{array} \right.
 \]
and  
\[ x_{i,2} \in \left\{ \begin{array}{ll} 
\{0\} & \text{if}\;\; x_{p(i),1} + 2 \; x_{p(i),2} = 0 \\[3mm]
\{0,n/4\} & \text{if}\;\; x_{p(i),1} + 2 \; x_{p(i),2} = n/2 
\end{array} \right.
 \]
Indeed, this follows from induction on the number of vertices of $\mathcal T$. For the tree of one vertex, this follows by definition. The induction step is done using a leaf of $\mathcal T$: if $(x_{i,j})_{0\leq i\leq r,j=1,2}$ is a vertex of $\mathcal X_n(\mathcal T)$, then $(x_{i,j})_{0\leq i\leq r-1,j=1,2}$ is a vertex of $\mathcal X_n(\mathcal T \setminus r)$. So the statement follows by considering the possible values of  $x_{p(r),1} + 2 \; x_{p(r),2}$ and the implications of these for the values of $x_{r,1}$ and $x_{r,2}$.

   This means that vertices $(x_{ij})_{ij}$ of $\mathcal X_n(\mathcal T)$ are determined by the equations
\[ x_{i,1}+x_{i,2} \in \left\{ \begin{array}{ll} 
\{0, n/2\} & \text{if}\;\; x_{p(i),1} + x_{p(i),2} = 0 \\[3mm]
\{0,n/4\} & \text{if}\;\; x_{p(i),1} +  x_{p(i),2} >0
\end{array} \right.
 \]
   and hence that
\begin{equation}\label{eq_optimization}
x_{\max}(n) =  \max\st{\sum_{i} x_i}{\begin{array}{c}
x_1\in\{0,n/2\}, \\[3mm]
 \; x_i \in \left\{ \begin{array}{ll} 
\{0, n/2\} & \text{if}\;\; x_{p(i)}  = 0 \\[3mm]
\{0,n/4\} & \text{if}\;\; x_{p(i)} > 0 
\end{array} \right. 
\end{array}
}.
\end{equation}

   To show that this is equal to $\gamma (\mathcal T)\cdot n$, note that the only complete graphs that can appear as subgraphs of $\mathcal T$ are $K_1$ and $K_2$. As such, we need to show that there exists a maximizer for \eqref{eq_optimization} that is supported on a disjoint union of such subgraphs.
   
   To this end, suppose that $(x_i)_i$ is a sequence that satisfies the conditions of \eqref{eq_optimization} that contains a connected graph on $3$ vertices $a,b$ and $c$ in its support. With respect to the shells, three such vertices can have two types of relations: one parent ($b$) and two children ($a$ and $c$) or a grandparent ($a$), a parent ($b$) and a child ($c$). In both cases, we want to show that the value of $\sum_i x_i$ can be made (not necessarily strictly) larger by choosing a sequence $(x'_i)_i$ that still satisfies the conditions in \eqref{eq_optimization} and satifies $x'_b=0$.
   
   In the first case, we have 
   \[
   x_a + x_b + x_c \in \{n, 3n/4\},
   \]
   depending on whether or not $b$ has a parent and if so whether $x_{p(b)}$ is positive. Setting $x'_b=0$, we are allowed to set $x'_a=x'_c=n/2$. Moreover, if $b$ has other children $\{d_j\}_j$ then $x_{d_j} \in \{0,n/4\}$ and we set 
   \[ 
   x'_{d_j} = 
   \left\{ 
   \begin{array}{ll} 
     0 & \text{if } x_{d_j}= 0 \\[3mm] 
     n/2  & \text{if } x_{d_j} = n/4 
   \end{array} \right.
   \]
   Since none of this does changes any of the conditions on the descendents of $\{d_j\}_j$, $a$ and $c$, we can leave the rest of the sequence as is. Since we now have
   \[
   x'_a+x'_b+x'_c = n,
   \]
   the sum of the resulting sequence has not decreased.
   
  Likewise, in the second case, the sequence we start with satisfies
  \[
  x_a + x_b+x_c \in \{n, 3n/4\} \; \; \text{and} \; \; x_b = x_c = n/4
  \]
  depending on whether or not $a$ has a parent and if so whether $x_{p(a)}$ is positive. Setting $x'_b=0$, we are allowed to set $x'_c=n/2$. Again, if $b$ has other children $\{d_j\}_j$ then $x_{d_j} \in \{0,n/4\}$ and we set 
  \[
  x'_{d_j} = 
  \left\{ 
  \begin{array}{ll} 
    0 & \text{if } x_{d_j}= 0 \\[3mm] 
    n/2  & \text{if } x_{d_j} = n/4 
  \end{array} \right.
  \]
  and again none of the conditions on the descendents of the vertices $c$ and $\{d_j\}_j$ change, which implies that we can leave the rest of the sequence as is. Finally, we again have
  \[
  \sum_i x'_i \geq \sum_i x_i.
  \]

  What the above shows that there exist vertices of $\mathcal{X}_n(\mathcal T)$ that maximize $\sum_{i,j} x_{ij}$ and moreover, when interpreted as functions on $\mathcal T\times \{1,2\}$, are supported on a disjoint union of subgraphs isomorphic to either $K_1$ or $K_2$. For these maximizers, the analysis can be reduced to understanding the maxima of $\mathcal T\simeq K_1$ and $\mathcal T\simeq K_2$. An elementary computation (which is essentially what we did in the proof of Proposition \ref{prp_cpltegraphsmeth3}) shows that these are $n/2$ and $3n/4$ respectively, which means we are done.
\end{proof}

The proof above also illustrates a stark difference with the case of Artin groups: the vertices of $\mathcal{X}_n(\mathcal T)$ that realize the maxima are not exclusively those supported on independent unions of complete subgraphs of $\mathcal T$. A simple example of a tree $\mathcal T$ for which $\mathcal{X}_n(\mathcal T)$ has such vertices is the line $\Lambda_r$ on $r$ vertices. We have
\[
\gamma(\Lambda_r) = \frac{r+1}{4}
\] 
An example of a vertex of $\mathcal{X}_n(\Lambda_r)$ that also has coordinate sum $\frac{r+1}{4}n$ is the vertex
\[
\left(\begin{array}{cccc} n/2 & 0 & \cdots & 0 \\
           0 & n/4 & \cdots & n/4 \end{array}
\right) \in \mathcal{X}_n(\Lambda_r).
\]
Phrased in terms of homomorphisms, this means that the homomorphisms where all the generators have roughly $n/2$ two-cycles contribute enough to show up in the asymptotic we are after, which is very different from the situation of RAAGs.


\section{Sharp rough upper bound for the subgroup growth of RAAGs} \label{sec_RAAG}

Let $\mathcal G$ be a finite graph on $r$ vertices, $\Gamma = \art(\mathcal G)$. The ultimate goal of this section will be to prove:

\begin{prp} \label{upper_bd_RAAG}
  There exists a constant $D>0$ (depending only on $\mathcal{G}$) so that
  \[
  h_n(\art(\mathcal G)) \le D^{n\log\log(n)} (n!)^{\alpha(\mathcal G)}
  \]
  for all $n$ large enough. 
\end{prp}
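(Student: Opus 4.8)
The plan is to prove Proposition \ref{upper_bd_RAAG} by induction on the number of vertices $r=|\mathcal G|$, the base case $r=1$ being $\art(\mathcal G)=\ZZ$ with $h_n=n!=(n!)^{\alpha(\mathcal G)}$. For the inductive step I would fix a vertex $v$, write $N=N_1(v)$, $\mathcal G'=\mathcal G\setminus\{v\}$ and $\Gamma'=\art(\mathcal G')$. Since the only relations of $\art(\mathcal G)$ involving $\sigma_v$ assert that it commutes with the $\sigma_w$, $w\in N$, giving a homomorphism $\rho\colon\Gamma\to\sym_n$ amounts to giving a homomorphism $\rho'\colon\Gamma'\to\sym_n$ together with an element $\rho(\sigma_v)\in Z_{\sym_n}(W(\rho'))$, where $W(\rho'):=\langle\rho'(\sigma_w):w\in N\rangle$. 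Hence
\[
h_n(\Gamma)\;=\;\sum_{\rho'\in\Hom(\Gamma',\sym_n)}\bigl|Z_{\sym_n}(W(\rho'))\bigr|.
\]
Now $\alpha(\mathcal G)-1\le\alpha(\mathcal G')\le\alpha(\mathcal G)$, and if $\alpha(\mathcal G')=\alpha(\mathcal G)-1$ the proposition follows at once: bound $|Z_{\sym_n}(W(\rho'))|\le n!$ and apply the induction hypothesis, $h_n(\Gamma)\le n!\,h_n(\Gamma')\le n!\,D^{n\log\log n}(n!)^{\alpha(\mathcal G')}=D^{n\log\log n}(n!)^{\alpha(\mathcal G)}$.

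The real content is the case $\alpha(\mathcal G')=\alpha(\mathcal G)$, which is unavoidable (for an odd cycle every vertex is of this type) and where the crude bound above loses a whole factor of $n!$. Here I would fix a threshold $T=T(n)$ of size $\asymp\log n/\log\log n$ and, for each $\rho'$, partition $[n]=X\sqcup Y$ into the union $X$ of the $W(\rho')$-orbits of size $>T$ and the union $Y$ of those of size $\le T$. As orbits in $X$ and in $Y$ are non-isomorphic $W(\rho')$-sets, $Z_{\sym_n}(W(\rho'))=Z_{\sym(X)}(W|_X)\times Z_{\sym(Y)}(W|_Y)$, so the sum splits as $\sum_{\rho'}|Z_{\sym(X)}(W|_X)|\cdot|Z_{\sym(Y)}(W|_Y)|$ (the rôle of Lemma \ref{dec_small_large_lem}), and the two factors are estimated by different means. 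For the large-orbit factor (Lemma \ref{large_lem}): $W|_X$ has at most $|X|/T\le n/T$ orbits, so writing $Z_{\sym(X)}(W|_X)$ as a product of wreath products, one per isomorphism type of orbit, and using that the centraliser of a transitive permutation group on $m$ letters has order $\le m$ (Lemma \ref{lem_transitive}), one gets $|Z_{\sym(X)}(W|_X)|\le n^{2n/T}$, which is $\le D_1^{\,n\log\log n}$ for $T\asymp\log n/\log\log n$, uniformly in $\rho'$.

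For the small-orbit factor (Lemma \ref{small_orbits}) it remains to show $\sum_{\rho'}|Z_{\sym(Y)}(W|_Y)|\le D_2^{\,n\log\log n}(n!)^{\alpha(\mathcal G)}$. I would read this as a count of triples: a decomposition $[n]=X\sqcup Y$, a homomorphism $\rho'$ of $\Gamma'$ whose $N$-generators have all $W$-orbits inside $Y$ of size $\le T$, and an element $\tau\in Z_{\sym(Y)}(W|_Y)$ (a partial copy of $\rho(\sigma_v)$). The constraint forces each $\rho'(\sigma_w)|_Y$, $w\in N$, to be block-diagonal with blocks of size $\le T$ on which the induced $\art(\mathcal G[N])$-action is transitive. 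The number of ways to choose the underlying set partition of $Y$ into such blocks is at most $n!$ crudely; running the induction hypothesis on $\art(\mathcal G[N])$ block by block, each block $B$ then contributes at most $(|B|!)^{|N|}$ to the generators and at most $|B|!$ to $\tau$, and since every $|B|\le T$ one has $\prod_B(|B|!)^{|N|+1}\le T^{(|N|+1)n}=D^{\,n\log\log n}$; finally the generators $\sigma_u$ with $u\notin N_1[v]$ are accounted for by the induction hypothesis applied to $\art(\mathcal G\setminus N_1[v])$, whose independence number is $\le\alpha(\mathcal G)-1$. Multiplying, the small-orbit contribution is at most $n!\cdot D^{\,n\log\log n}\cdot D^{\,n\log\log n}(n!)^{\alpha(\mathcal G)-1}=D^{\,n\log\log n}(n!)^{\alpha(\mathcal G)}$, and combining with the large-orbit factor gives $h_n(\Gamma)\le D^{\,n\log\log n}(n!)^{\alpha(\mathcal G)}$.

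The hard part will be the small-orbit estimate. Bounding $\sum_{\rho'}|Z(W|_Y)|$ by (number of $\rho'$)$\times$(max of $|Z(W|_Y)|$) is hopeless — it reintroduces the lost factor of $n!$ — so one genuinely has to exploit the phenomenon flagged in the introduction, that an orbit structure of $W(\rho')$ with many small orbits rigidifies $\rho'$, forcing block-diagonal form \emph{simultaneously} for all generators indexed by $N$, with blocks of size $\le T$. Turning this into a clean count, making the block decomposition interact correctly with the induction hypothesis (both on $\mathcal G[N]$ and on $\mathcal G\setminus N_1[v]$, the latter requiring the elementary identity $\alpha(\mathcal G\setminus N_1[v])\le\alpha(\mathcal G)-1$), and — crucially — producing the exponent $\alpha(\mathcal G)$ rather than $|N|$, is where essentially all of the difficulty sits. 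By contrast the two centraliser lemmas from the appendix and the calibration $T\asymp\log n/\log\log n$ (chosen precisely so that the large- and small-orbit error terms are both of the form $D^{n\log\log n}$) are routine.
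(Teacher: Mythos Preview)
Your overall strategy---induction on $|\mathcal G|$, fix a vertex $v$, split according to the orbit sizes of $W=\langle\sigma_w:w\in N_1(v)\rangle$---matches the paper's, and the large-orbit centraliser bound is fine. But your split is organised differently from the paper's and has a genuine gap. You factor only the centraliser, $Z_{\sym_n}(W(\rho'))=Z_{\sym(X)}(W|_X)\times Z_{\sym(Y)}(W|_Y)$, bound $|Z_X|$ uniformly, and are left with $\sum_{\rho'}|Z_Y|$. This sum still ranges over \emph{all} $\rho'\in\Hom(\Gamma',\sym_n)$, in particular over all choices of $\rho'(\sigma_w)|_X$ for $w\in N$. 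Your ``triple count'' (block partition of $Y$; block-diagonal $\sigma_w|_Y$; the element $\tau$; and the $\sigma_u$ for $u\notin N_1[v]$) omits this data entirely, so the map from pairs $(\rho',\tau)$ to your tuples is many-to-one and the product you write down is not an upper bound for $\sum_{\rho'}|Z_Y|$. For the $5$-cycle, say, $N$ consists of two non-adjacent vertices and the uncounted choices for $(\sigma_{w_1}|_X,\sigma_{w_2}|_X)$ alone can number $((n-m)!)^2$.

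This is exactly what Lemma~\ref{dec_small_large_lem} is for, and it is \emph{not} the trivial factorisation $Z=Z_X\times Z_Y$ you identify it with. It is the non-obvious statement (proved via Lemma~\ref{lem_centbd2}) that, at a cost of $C^n$, one may assume the \emph{entire} representation of $\Gamma$---all generators, not only those in $N$---preserves $X\sqcup Y$. Only then does the large-orbit piece $|L(n-m,K)|$ absorb $\sigma_w|_X$ and $\sigma_u|_X$ together, via the inductive bound $h_{n-m}(\art(\mathcal G\setminus\{v\}))\le C^{n\log\log n}((n-m)!)^{\alpha(\mathcal G)}$; the small-orbit piece $|S(m,K)|$ is then handled essentially as you sketch, but with $h_m(\art(\mathcal G\setminus N_1[v]))$ on $m$ letters rather than $n$. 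Without forcing $\rho$ itself (not just its centraliser) to split along $X\sqcup Y$, there is nowhere to invoke the inductive hypothesis on the $X$-side, and the argument does not close.
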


We will prove this proposition by induction on the number of vertices $r$. That is, we assume that we know that for any graph $\mathcal H$ on at most $r-1$ vertices (so for any proper subgraph of $\mathcal G$) we have
\begin{equation} \label{eq_induction_hyp}
  h_n(\art(\mathcal H)) \le B^{n\log\log(n)} (n!)^{\alpha(\mathcal H)}. 
\end{equation}

Throughout this section we also fix a vertex $v_0 \in \mathcal G$. Let $v_1, \ldots, v_d$ be its neighbours in $\mathcal G$ and $\sigma_0, \ldots, \sigma_d$ the corresponding generators. We enumerate (arbitrarily) the remaining generators of $\Gamma$ as $\sigma_{d+1}, \ldots, \sigma_r$. Finally, we will use the shortened notation $\Gamma$ for $\art(\mathcal G)$. 


\subsection{Splitting into small and large orbits}~\label{sec_raagsmalllargeorbits}

We will bound $h_n(\Gamma)$ from above by splitting it into more manageable summands as follows : for $\ell, K \ge 0$ let  
\[
L(\ell, K) = \{ \rho \in \Hom(\Gamma, \sym_\ell) : \text{all orbits of } \langle\rho(\sigma_1), \ldots, \rho(\sigma_d) \rangle \text{ are of size } > K \}
\]
and
\[
S(\ell, K) = \{ \rho \in \Hom(\Gamma, \sym_\ell) : \text{all orbits of } \langle\rho(\sigma_1), \ldots, \rho(\sigma_d) \rangle \text{ are of size } \le K \}. 
\]

\begin{lem} \label{dec_small_large_lem}
There is $C_0 > 0$ (depending only on $\mathcal G$) such that for all $n \ge 1$ we have : 
\begin{equation} \label{dec_small_large}
  h_n(\Gamma) \le C_0^n \sum_{m=0}^n \binom{n}{m} \card{L(n-m, K)} \cdot \card{S(m, K)}. 
\end{equation}
\end{lem}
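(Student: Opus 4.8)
The plan is to decompose an arbitrary homomorphism $\rho\in\Hom(\Gamma,\sym_n)$ according to how the set $\{1,\ldots,n\}$ breaks up under the action of the subgroup $H=\langle\rho(\sigma_1),\ldots,\rho(\sigma_d)\rangle$ generated by the neighbours of $v_0$. First I would partition the orbits of $H$ into the "small" ones (size $\le K$) and the "large" ones (size $>K$); let $A\subset\{1,\ldots,n\}$ be the union of the large orbits and $B$ its complement, the union of small orbits, with $|B|=m$. The number of ways to choose the subset $B$ of size $m$ is $\binom{n}{m}$. Both $A$ and $B$ are invariant under all of $\rho(\sigma_1),\ldots,\rho(\sigma_d)$ since they are unions of $H$-orbits; the key point I must address is that they need \emph{not} be invariant under $\rho(\sigma_0)$ (or the other generators $\sigma_{d+1},\ldots,\sigma_r$), since those need not commute with all of $\sigma_1,\ldots,\sigma_d$.

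Next I would handle that failure of invariance. The vertex $v_0$ commutes with each $v_i$, $1\le i\le d$, so $\rho(\sigma_0)$ normalizes $H$ and hence permutes the $H$-orbits, preserving their sizes; in particular $\rho(\sigma_0)$ maps large orbits to large orbits and small to small, so $\rho(\sigma_0)$ \emph{does} preserve both $A$ and $B$. The genuinely non-invariant generators are the remaining ones $\sigma_{d+1},\ldots,\sigma_r$ (of which there are $r-d-1$, a constant depending only on $\mathcal G$). For these I would use the following device: record, in addition to the data of $\rho$ restricted to $A$ and to $B$, the way each $\rho(\sigma_j)$, $j>d$, "crosses" between $A$ and $B$. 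Concretely, writing $\rho(\sigma_j)$ as a permutation of $\{1,\ldots,n\}$, it is determined by (i) the subset of $A$ it sends into $B$ and vice versa — but these must have the same cardinality, call it $t_j\le\min(m,n-m)$ — and (ii) the bijection data; the cheap way to bound this is to note that any permutation of $\{1,\ldots,n\}$ preserving the partition $\{A,B\}$ set up to an "$A\leftrightarrow B$ swap" is at worst a product of a permutation preserving $A$, a permutation preserving $B$, and the choice of which elements get swapped across, contributing a factor bounded by $\binom{n}{t_j}^2 t_j!\le (C')^n$ for a constant $C'$. Collecting these factors over the $r-d-1$ bad generators gives a single constant-to-the-$n$ factor.

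Then I would assemble the bound. Having fixed $B$ (cost $\binom{n}{m}$) and the crossing data of the bad generators (cost $(C_1)^n$ for a suitable $C_1=C_1(\mathcal G)$), the restriction of $\rho$ to $A$ is a homomorphism $\Gamma\to\sym(A)$ all of whose $\langle\rho(\sigma_1),\ldots,\rho(\sigma_d)\rangle$-orbits are large, i.e.\ an element of $L(n-m,K)$ (after relabelling $A$ by $\{1,\ldots,n-m\}$, costing nothing once $B$ is fixed); likewise the restriction to $B$ lies in $S(m,K)$. Hence
\[
h_n(\Gamma)\le C_1^n\sum_{m=0}^n\binom{n}{m}\card{L(n-m,K)}\cdot\card{S(m,K)},
\]
which is \eqref{dec_small_large} with $C_0=C_1$. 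The main obstacle is exactly the bookkeeping in the second paragraph: making sure that only a bounded number of generators fail to preserve the decomposition and that each of them contributes at most an exponential-in-$n$ overcount, so that the whole correction is absorbed into $C_0^n$; once that is pinned down the rest is just the union bound over the choice of $B$.
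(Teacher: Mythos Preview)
There is a genuine gap in your argument. You correctly observe that $\sigma_0,\sigma_1,\ldots,\sigma_d$ preserve the splitting $\{A,B\}$, and you then try to handle each remaining generator $\sigma_j$ ($j>d$) by recording ``crossing data'' worth at most $(C')^n$ and keeping a residual permutation of $A$ and one of $B$. The problem is the sentence ``the restriction of $\rho$ to $A$ is a homomorphism $\Gamma\to\sym(A)$''. It is not: whatever permutation of $A$ you extract from $\rho(\sigma_j)$ (the induced action on the part of $A$ that stays in $A$, extended by the identity, or any other recipe), there is no reason it should still commute with $\rho(\sigma_w)|_A$ for $w\sim v_j$. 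So the $(r+1)$-tuple you write down on $A$ is just a tuple of permutations, not an element of $\Hom(\Gamma,\sym_{n-m})$, and in particular not an element of $L(n-m,K)$. Your encoding therefore does not land in $L(n-m,K)\times S(m,K)\times\{\text{crossing data}\}$, and the inequality does not follow.

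The paper repairs exactly this point by a centraliser argument rather than by bookkeeping crossings. For each vertex $v\notin N_1(v_0)$, once the images of its neighbours $\rho(N(v))$ are fixed, the number of legal choices for $\rho(\sigma_v)$ in $\sym_n$ is $|\cent_{\sym_n}(\rho(N(v)))|$, while the number of legal choices that in addition preserve $\{A,B\}$ is $|\cent_{\sym_{n-m}\times\sym_m}(\rho(N(v)))|$; Lemma~\ref{lem_centbd2} shows the former is at most $2^n$ times the latter. Iterating over the $|\mathcal G|-|N_1(v_0)|$ vertices outside $N_1(v_0)$ lets one replace the count of all $\rho$ by the count of those $\rho$ for which \emph{every} generator preserves $\{A,B\}$, at a total cost of $2^{(|\mathcal G|-|N_1(v_0)|)n}$. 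For those $\rho$ the restriction to $A$ (resp.\ $B$) genuinely is a homomorphism and lies in $L(n-m,K)$ (resp.\ $S(m,K)$). This is the missing idea in your plan; the crossing-data device does not by itself guarantee that the restricted tuple satisfies the relations of $\Gamma$.
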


\begin{proof}
  Given a set of vertices $W\subset V(\mathcal{G})$ and a $\rho\in\Hom(\Gamma,\sym_n)$, write 
  \[
  \mathcal{O}^\rho_{\geq K}(W)\subset \{1,\ldots,n\}
  \]
  for the union of all orbits of size at least $K$ of $\rho(\st{\sigma_w}{w\in W})$. In what follows, we will abuse notation slightly and write $\rho(W) = \rho(\st{\sigma_w}{w\in W})$. 

  Given $m, n \in \NN$ with $m \leq n$ and two disjoint sets of vertices $W_0,W_1\subset V(\mathcal G)$, define
  \[
  H_{n,m}(\Gamma,W_0,W_1) = \st{\rho\in\Hom(\Gamma,\sym_n)}{\substack{\displaystyle{\mathcal{O}^\rho_{\geq K}(W_0) = \{1,\ldots, n-m\}\;\text{and}}\\[2mm] \displaystyle{\rho(W_1) \;\text{preserves}\; \{1,\ldots,n-m\}}}} ; 
  \]
  note that $H_{n, m}(\Gamma, N_1(v_0), \mathcal G \setminus N_1(v_0))$ is naturally identified with $S(m, K) \times L(n-m, K)$. Finally, we write
  \[
  h_{n,m}(\Gamma,W_0,W_1) =\card{H_{n,m}(\Gamma,W_0,W_1)} 
  \]
  so that
  \begin{equation} \label{xhfgsyktfs}
  h_{n, m}(\Gamma, N_1(v_0), \mathcal G \setminus N_1(v_0)) = |S(m, K)| \cdot |L(n-m, K)|. 
  \end{equation}

  Given $v\in V(\mathcal G)\setminus (W_0\cup W_1)$, we have
  \[
  h_{n,m}(\Gamma,W_0,W_1\cup\{v\}) = \sum_{\rho\in  H_{n,m}(\art(\mathcal{G}\setminus\{v\}) ,W_0,W_1)} \card{\cent_{\sym_{n-m}\times\sym_m}(\rho(N(v))},
  \]
  where we have written $\sym_{n-m}\times\sym_m = \sym(\{1,\ldots, n-m\})\times\sym(\{n-m+1,\ldots,n\})<\sym_n$. Now we use Lemma \ref{lem_centbd2} and obtain
  \begin{eqnarray*}
  h_{n,m}(\Gamma,W_0,W_1\cup\{v\}) & \geq & 2^{-n} \sum_{\rho\in  H_{n,m}(\art(\mathcal{G}\setminus\{v\}) ,W_0,W_1)} \card{\cent_{\sym_n}(\rho(N(v))}  \\
  & = & 2^{-n}\cdot h_{n,m}(\Gamma,W_0,W_1).
  \end{eqnarray*}
  Applying this to all vertices in $\mathcal G \setminus N_1(v_0)$ in turn we see that
  \begin{eqnarray*}
  h_n(\Gamma) & = & \sum_{m=0}^n \binom{n}{m} h_{n,m}(\Gamma,N_1(v_0),\emptyset) \\
  & \leq & 2^{\card{V(\mathcal G)}-\card{N_1(v_0)}}\sum_{m=0}^n \binom{n}{m} h_{n,m}(\Gamma,N_1(v_0),V(\mathcal G)\setminus N_1(v_0)).
  \end{eqnarray*}
  which together with \eqref{xhfgsyktfs} finishes the proof. 
\end{proof}


\subsection{Large orbits}~

 Now all that remains is to control the quantities $\card{L(n-m, K)}$ and $\card{S(m, K)}$. We start with the former:

\begin{lem} \label{large_lem}
  Let $K = \floor{\log(n)}$. There exists a constant $C_1>0$ (depending only on $\mathcal{G}$), so that for all $m,n\in\NN$ so that $m\leq n$ we have:
  \begin{equation} \label{large}
  \card{L(n-m, K)} \le C_1^{n\log\log(n)} \left( (n-m)! \right)^{\alpha(\mathcal G)}
  \end{equation}
\end{lem}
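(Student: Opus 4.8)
The key point is that in $L(n-m,K)$ the subgroup $\langle\rho(\sigma_1),\ldots,\rho(\sigma_d)\rangle$ generated by the neighbours of $v_0$ acts with all orbits of size $>K$, hence with at most $(n-m)/K$ orbits. I would first peel off the generator $\sigma_0$ at $v_0$: since $\sigma_0$ commutes with all the $\sigma_i$, $i\le d$, the permutation $\rho(\sigma_0)$ lies in the centraliser of $\rho(\langle\sigma_1,\ldots,\sigma_d\rangle)$, and because that group is transitive on each of its at most $(n-m)/K$ orbits, Lemma \ref{lem_transitive} (or \ref{lem_centbd2}) bounds the number of choices for $\rho(\sigma_0)$, once the rest is fixed, by something like $\prod_{o}\card{o}\cdot((n-m)/K)!\le (n-m)!\,\cdot(\text{something subfactorial})$ — in any case by $C^{n}\cdot((n-m)/K)!$, which after Stirling is $\exp(O(n\log\log n))$ because $K=\floor{\log n}$ forces $((n-m)/K)! \le \exp(O(n))\cdot (n/\log n)^{n/\log n}$, and $\log((n/\log n)^{n/\log n}) = O(n)$. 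Actually the cleaner statement: the number of elements in the centraliser is at most $(\text{number of orbits})!\cdot\prod(\text{orbit sizes})$, and since there are $\le n/K$ orbits each of size $\le n$, this is $\le (n/K)!\cdot n^{n/K}$, whose logarithm is $O(n)$ for $K\asymp\log n$; so $\rho(\sigma_0)$ costs only an $\exp(O(n))$ factor, which is even better than needed.

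After removing $v_0$ we are left with homomorphisms of $\art(\mathcal G\setminus\{v_0\})$, i.e.\ we want to count homomorphisms $\Hom(\art(\mathcal G\setminus v_0),\sym_{n-m})$ but only those in which $\langle\rho(\sigma_1),\ldots,\rho(\sigma_d)\rangle$ has all orbits of size $>K$. The induction hypothesis \eqref{eq_induction_hyp} gives $h_{n-m}(\art(\mathcal G\setminus v_0))\le B^{(n-m)\log\log(n-m)}((n-m)!)^{\alpha(\mathcal G\setminus v_0)}$. The issue is that $\alpha(\mathcal G\setminus v_0)$ may equal $\alpha(\mathcal G)$ (if $v_0$ lies in no maximum independent set) or $\alpha(\mathcal G)-1$ (if it does). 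In the former case we are already done with room to spare. In the latter case — the real content — I need to gain back a full factor of $(n-m)!$ from the large-orbit constraint on the neighbours of $v_0$. Here is where the hypothesis "all orbits of $\langle\rho(\sigma_1),\dots,\rho(\sigma_d)\rangle$ have size $>K$" does its work: intuitively, if $\alpha(\mathcal G)-1=\alpha(\mathcal G\setminus v_0)$ then already the count $((n-m)!)^{\alpha(\mathcal G\setminus v_0)}$ is by a factor $(n-m)!$ smaller than $((n-m)!)^{\alpha(\mathcal G)}$, so in that subcase we are done immediately and don't even need the constraint. In other words the case split is: either $\alpha(\mathcal G\setminus v_0)=\alpha(\mathcal G)-1$, and then the induction hypothesis alone (times the $\exp(O(n))$ for $\sigma_0$) already gives the claimed bound $C_1^{n\log\log n}((n-m)!)^{\alpha(\mathcal G)}$; or $\alpha(\mathcal G\setminus v_0)=\alpha(\mathcal G)$, and then we need a genuine argument.

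So the main obstacle is exactly the second subcase: $v_0$ is in no maximum independent set of $\mathcal G$, yet we must prove $\card{L(n-m,K)}\le C_1^{n\log\log n}((n-m)!)^{\alpha(\mathcal G)}$ rather than the weaker $((n-m)!)^{\alpha(\mathcal G)+1}$ that the crude induction would give after restoring $\sigma_0$ (wait — restoring $\sigma_0$ costs only $\exp(O(n))$, so crude induction gives $((n-m)!)^{\alpha(\mathcal G\setminus v_0)}=((n-m)!)^{\alpha(\mathcal G)}$ directly!). Let me reconsider: so in fact in \emph{both} subcases the induction hypothesis applied to $\mathcal G\setminus v_0$ together with the cheap bound on $\rho(\sigma_0)$ already yields the lemma, with $C_1$ depending on $B$ and on $\mathcal G$. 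The constraint "large orbits" and the specific value $K=\floor{\log n}$ seem to be needed only to make the $\rho(\sigma_0)$ count cheap (an arbitrary commuting permutation is too expensive; one commuting with a group having few orbits is cheap), and to feed into the companion bound for $S(m,K)$. Thus the plan: (1) fix the action of $\mathcal G\setminus v_0$, bound the choices of $\rho(\sigma_0)$ in the centraliser of a group with $\le (n-m)/K$ orbits by $\exp(O(n))$ using the appendix lemmas; (2) apply the induction hypothesis \eqref{eq_induction_hyp} to $\art(\mathcal G\setminus v_0)$ on $n-m$ points; (3) combine, noting $\alpha(\mathcal G\setminus v_0)\le\alpha(\mathcal G)$ always and $\log\log(n-m)\le\log\log n$, and absorb constants into $C_1$. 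I expect step (1)'s bookkeeping — getting the centraliser estimate uniform in the orbit partition and confirming $(n/\log n)!\cdot n^{n/\log n}=\exp(O(n))$ — to be the only place requiring care.
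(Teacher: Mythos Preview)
Your proposal is correct and follows essentially the same route as the paper: bound the number of choices for $\rho(\sigma_0)$ by the order of the centraliser of $\langle\rho(\sigma_1),\ldots,\rho(\sigma_d)\rangle$, observe that this centraliser has size $\exp(O(n))$ because the group has at most $(n-m)/K$ orbits (the paper splits this as $|H/H'|\le\lfloor k/K\rfloor!\le 3^n$ and $|H'|\le 2^k$ via Lemma~\ref{lem_transitive}, which is exactly your ``$(\text{number of orbits})!\cdot\prod(\text{orbit sizes})$''), then apply the induction hypothesis \eqref{eq_induction_hyp} to $\art(\mathcal G\setminus v_0)$ and use $\alpha(\mathcal G\setminus v_0)\le\alpha(\mathcal G)$. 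Your detour through the case split on whether $v_0$ lies in a maximum independent set is unnecessary, as you yourself realised; the paper dispenses with it and uses $\alpha(\mathcal G\setminus v_0)\le\alpha(\mathcal G)$ directly.
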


\begin{proof}
  Let
  \[
  \Gamma' = \langle \sigma_1, \ldots \sigma_r \rangle \cong \art(\mathcal G \setminus v_0).
  \]
  We have for any integer $k \ge 1$ that : 
  \[
  \card{L(k, K)} \le h_k(\Gamma') \cdot \max_{\rho\in L(k,K)} \card{\cent_{\sym_k}(\rho(\sigma_1), \ldots, \rho(\sigma_d))}. 
  \]
  Let $\rho\in L(k,K)$ and $H = \cent(\rho(\sigma_1), \ldots, \rho(\sigma_d))$. Moreover let $H'$ be the normal subgroup of $H$ which preserves setwise every orbit of $\langle \rho(\sigma_1), \ldots, \rho(\sigma_d) \rangle$. Let $\orb=\orb(\rho(\sigma_1), \ldots, \rho(\sigma_d))$ be the set of orbits, so that $H/H' \hookrightarrow \sym(\orb)$. Since all orbits have cardinality at least $K$ we have $|\orb| \le k/K$ of them and it follows that $|H/H'| \le |\sym(\orb)| \le \floor{k/K}!$. Note that if $k \le n$ and $K = \floor{\log n}$ we have
  \[
  \log(\floor{k/K}!) \le k/K \log(k/K) \le \frac{n}{\log n} \cdot (\log(n) - \log\log n) \le n 
  \]
  hence we have $\floor{k/K}! \le 3^n$ and it follows that $|H/H'| \le 3^n$. 

  On the other hand $H'$ is the product of centralisers of the restrictions to the orbits. Letting $\lambda_1, \ldots, \lambda_{|\orb|}$ be the cardinalities of the orbits, we get that $H'$ is of size at most $\prod_i 2^{\lambda_i}=2^k$ by Lemma \ref{lem_transitive}. 

  Putting the above together for $k= n-m$, we get that for $1 \le m \le n$ and $K=\floor{\log n}$ we have :
  \[
  |L(n-m, K)| \le \max_\rho (|H/H'| \cdot |H'|) \cdot h_{n-m}(\Gamma') \le 6^n \cdot h_{n-m}(\Gamma')
  \]
  for all $n \geq 0$. Since $\alpha(\mathcal G \setminus v_0) \le \alpha(\mathcal G)$ it follows from the induction hypothesis \eqref{eq_induction_hyp} that for some $C_0$ depending only on $\mathcal G$ we have
  \[
  h_{n-m}(\Gamma') = h_{n-m}(\art(\mathcal G \setminus v_0)) \le C_0^{n\log\log n}((n-m)!)^{\alpha(\mathcal G)}
  \]
  and so we get 
  \[
  |L(n-m, K)| \le C_1^{(n-m)\log\log(n-m)} \left( (n-m)! \right)^{\alpha(\mathcal G)},
  \]
  for some constant $C_1>0$. 
\end{proof}


\subsection{Small orbits}~\label{sec_raagsmall}

For $\card{S(m, K)} $ we prove:

\begin{lem} \label{small_orbits}
  There exists a constant $C_2>0$ (depending only on $\mathcal{G}$), so that for all $m,n\in\NN$ so that $m\leq n$ we have:
  \begin{equation} \label{small}
  \card{S(m, K)} \le C_2^{n\log\log(n)} (m!)^{\alpha(\mathcal G)}
  \end{equation}
\end{lem}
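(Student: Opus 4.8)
\textbf{Proof plan for Lemma \ref{small_orbits}.}

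The plan is to reduce the count of $S(m,K)$ to the large-orbit bound on a strictly smaller graph, exploiting the fact that when all orbits of $\langle \rho(\sigma_1),\dots,\rho(\sigma_d)\rangle$ have size at most $K=\floor{\log n}$, the permutation $\rho(\sigma_0)$ is heavily constrained. More precisely, $\rho(\sigma_0)$ must commute with each $\rho(\sigma_i)$ for $i=1,\dots,d$, hence it normalises the group $\langle\rho(\sigma_1),\dots,\rho(\sigma_d)\rangle$ and permutes its orbits. Since these orbits are small, on any union of orbits of a fixed isomorphism type the number of ways $\rho(\sigma_0)$ can act is limited: it lies in the normaliser of a transitive small group inside $\sym(o)$ for each orbit $o$ (contributing at most $|o|!\le K!$ per orbit by an argument like Lemma \ref{lem_transitive}/\ref{lem_centbd2}), together with a permutation of the at most $m/K$ orbits themselves (contributing at most $(\floor{m/K})!$). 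As in the proof of Lemma \ref{large_lem}, $\log((\floor{m/K})!)\le \log((\floor{n/K})!)\le n$ and $\log(K!)^{m/K}\le (m/K)K\log K\le m\log\log n$, so the total number of choices for $\rho(\sigma_0)$ given $\rho(\sigma_1),\dots,\rho(\sigma_d)$ is at most $C^{n\log\log n}$ for some constant $C$ depending only on $\mathcal G$.

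Next I would account for the remaining generators $\sigma_{d+1},\dots,\sigma_r$, i.e.\ those corresponding to vertices of $\mathcal G\setminus N_1(v_0)$ other than $v_0$. Here the idea is that, having fixed the restriction of $\rho$ to the subgraph $\mathcal G\setminus\{v_0\}$, adding back $\sigma_0$ only multiplies the count by the bounded factor just described; equivalently,
\[
\card{S(m,K)} \le C^{n\log\log n}\cdot \card{\st{\rho'\in\Hom(\art(\mathcal G\setminus v_0),\sym_m)}{\text{all orbits of }\langle\rho'(\sigma_1),\dots,\rho'(\sigma_d)\rangle\text{ have size}\le K}}.
\]
The inner set is bounded simply by $h_m(\art(\mathcal G\setminus v_0))$, and then the induction hypothesis \eqref{eq_induction_hyp} applied to $\mathcal H = \mathcal G\setminus v_0$ gives $h_m(\art(\mathcal G\setminus v_0))\le B^{m\log\log m}(m!)^{\alpha(\mathcal G\setminus v_0)}$. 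Since $\alpha(\mathcal G\setminus v_0)\le\alpha(\mathcal G)$ and $m\le n$, combining these estimates yields $\card{S(m,K)}\le C_2^{n\log\log n}(m!)^{\alpha(\mathcal G)}$ as required.

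The part requiring the most care — and the likely main obstacle — is the bookkeeping that shows $\rho(\sigma_0)$ (and, if one organises the argument that way, the other vertices not adjacent to $v_0$) genuinely contributes only a subfactorial factor $C^{n\log\log n}$ rather than something of order $(m!)$. The point is that small orbits are what makes this work: the symmetric group on the \emph{set of orbits} has size $(\floor{m/K})!$, which is $\exp(O(n))$ because $K$ grows, while the within-orbit contributions are each at most $K!=\exp(O(\log n\log\log n))$ and there are at most $m/K$ of them, giving $\exp(O(m\log\log n))$ in total. I would need to be careful that these choices are made \emph{after} fixing $\rho(\sigma_1),\dots,\rho(\sigma_d)$ (so that the orbit structure is determined) and that the commutation relations with $\sigma_0$ impose exactly the normaliser condition used above; a clean way is to invoke the appendix lemmas on centralisers/normalisers of transitive subgroups of $\sym_k$ for $k\le K$, exactly in the spirit of how Lemma \ref{large_lem} was proved. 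Once that quantitative claim is in place, the rest is a direct application of the induction hypothesis together with the monotonicity $\alpha(\mathcal G\setminus v_0)\le\alpha(\mathcal G)$.
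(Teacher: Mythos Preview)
There is a genuine gap: you have transplanted the centraliser estimate from the \emph{large}-orbit case (Lemma \ref{large_lem}) into the \emph{small}-orbit case, and it is false there. When all orbits of $\langle\rho(\sigma_1),\dots,\rho(\sigma_d)\rangle$ have size at most $K$, the number of orbits is at \emph{least} $m/K$, not at most; in particular there can be as many as $m$ orbits (take all $\rho(\sigma_i)$ trivial). In that extreme case the centraliser is all of $\sym_m$, so $\rho(\sigma_0)$ has $m!$ choices, not $C^{n\log\log n}$. Your claimed bound $(\lfloor m/K\rfloor)!$ for the permutation of orbits is therefore off by a full factorial in general. Plugging the correct centraliser bound into your scheme and then applying the induction hypothesis to $\mathcal G\setminus\{v_0\}$ (where $\alpha$ can equal $\alpha(\mathcal G)$) yields only $(m!)^{\alpha(\mathcal G)+1}$, which is too large.

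The paper's argument avoids this by removing the \emph{entire} closed neighbourhood $N(v_0)=\{v_0,v_1,\dots,v_d\}$ rather than just $v_0$. The combined contribution of $(\rho(\sigma_0),\rho(\sigma_1),\dots,\rho(\sigma_d))$ with a fixed orbit profile $\pi=(1^{\lambda_1}\cdots K^{\lambda_K})$ is bounded by $\frac{m!}{z_\pi}\cdot\prod_i(i!)^{d\lambda_i}$ (choices of partition and within-block representations) times $|Z(\sigma_1,\dots,\sigma_d)|\le 2^m z_\pi$ (choices for $\sigma_0$); the $z_\pi$'s cancel and one is left with $m!$ times a factor $C^{m\log\log n}$. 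The remaining generators are then handled by the induction hypothesis applied to $\mathcal G''=\mathcal G\setminus N(v_0)$, where the key Lemma \ref{gap1_indep_nb} gives $\alpha(\mathcal G'')\le\alpha(\mathcal G)-1$, so the factor $m!$ is exactly absorbed. The drop of $\alpha$ by one upon deleting a full neighbourhood is the essential combinatorial input that your approach does not use and cannot do without.
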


\begin{proof}
  Let $\rho \in S(m, K)$ and let $\pi$ be the partition of $[1, m]$ into the orbits of the group $\rho(\langle \sigma_1, \ldots, \sigma_d \rangle)$, the elements of which are of size at most $K$. Furthermore, let $z_\pi$ the size of the corresponding centraliser: if $\pi = (1^{\lambda_1} \cdots K^{\lambda_K})$ we have
  \[
  z_\pi = \prod_{i=1}^K i^{\lambda_i} \cdot \lambda_i!.
  \]
  In particular, by Lemma \ref{lem_transitive} we have that
  \begin{equation} \label{simpl_centra}
    \card{Z(\sigma_1, \ldots, \sigma_d) } \le \prod_i (2^i)^{\lambda_i} \lambda_i! = 2^{\sum_i i\lambda_i} \prod_i  \lambda_i! \leq 2^m z_\pi.
  \end{equation}

  Now let
  \[
  \Gamma'' = \langle \sigma_{d+1}, \ldots, \sigma_r \rangle \cong \art(\mathcal G \setminus N(v_0)). 
  \]
  For a $\rho \in S(m, K)$ we have at most $h_m(\Gamma'')$ possibilities for $(\rho(\sigma_{d+1}), \ldots, \rho(\sigma_r))$. The number of possibilities for $(\rho(\sigma_1, \ldots, \rho(\sigma_d))$ with the orbit profile $\pi = (1^{\lambda_1} \cdots K^{\lambda_K})$ is at most :
  \begin{equation} \label{est_nb_13}
    \frac{m!}{z_\pi} \prod_{i=1}^K (i!)^{d\lambda_i}
  \end{equation}
  Here the first factor counts the number of realisations of $\pi$ in $[1,m]$ and the second is the number of representations of the free group on $d$ generators preserving each element of the partition. We have:
  \[
  \log \left( \prod_{i=1}^K (i!)^{d\lambda_i} \right) \le \sum_{i=1}^K d\lambda_i \cdot (i \log i) \le d\log(K) \sum_i \lambda_i \cdot i = d\log\log(n) \cdot m
  \]
  so that
  \begin{equation} \label{LABEL}
    \prod_{i=1}^K (i!)^{\lambda_i} \le 3^{dm\log\log(n)}
  \end{equation}

  In the end, using \eqref{simpl_centra}, \eqref{est_nb_13} and \eqref{LABEL} we obtain : 
  \begin{equation} \label{small_beta}
    \card{S(m, K)} \le h_m(\Gamma'') \sum_\pi 2^m z_\pi \cdot \frac{m!}{z_\pi} \prod_{i=1}^K (i!)^{\lambda_i} \le 6^{m\log\log(n)} \cdot m! \cdot h_m(\Gamma'')
  \end{equation}
for large $n$. Now we have the following easy lemma.

  \begin{lem} \label{gap1_indep_nb}
  For any vertex $v$ of $\mathcal G$, if $\mathcal G''$ is the subgraph induced on vertices at distance at least 2 of $v$ then $\alpha(\mathcal G'') \le \alpha(\mathcal G) - 1$. 
  \end{lem}

  So we get from  the induction hypothesis \eqref{eq_induction_hyp} that
  \[
  h_m(\Gamma') \le B^{m\log\log m} (m!)^{\alpha(\mathcal G \setminus N(v_0))} \le B^{m\log\log m} (m!)^{\alpha(\mathcal G) - 1}
  \]
  and together with \eqref{small_beta} this finally implies that 
  \[
  \card{S(m, K)} \le 6^{m\log\log(n)} B^{m\log\log(n)} \cdot (m!)^{\alpha(\mathcal G'')} \cdot m! \le C_2^{n\log\log(n)} (m!)^{\alpha(\mathcal G)},
  \]
  for some constant $C_2>0$. 
\end{proof}


\subsection{Conclusion}

We are now ready to prove our bound. Plugging \eqref{large} and \eqref{small} into \eqref{dec_small_large} we get :
\begin{align*}
  h_n(\Gamma) &\le  C_0^n \cdot (C_1\cdot C_2)^{n\log\log(n)} \sum_{m=0}^n (m!)^{\alpha(\mathcal G)} \left((n-m)!\right)^{\alpha(\mathcal G)} \\
  & \le C_0^n \cdot (C_1\cdot C_2)^{n\log\log(n)} \cdot n \cdot (n!)^{\alpha(\mathcal G)}
\end{align*}
so we can conclude that there exists $D$ depending only on $\mathcal G$ such that
\[
  h_n(\Gamma) \le D^{n\log\log(n)} (n!)^{\alpha(\mathcal G)}. 
\]


\section{Sharp rough upper bound for the subgroup growth of some RACGs} \label{sec_RACG}

In this section we prove the upper bound in Theorem \ref{Main_coxeter}. We will use an induction argument based on the following more precise statement. We denote by $\mathcal P_{\le l}(n)$ the set of partitions $\pi$ of $\{1, \ldots, n\}$ with all parts at most $l$, and for such $\pi$ we denote by $\pi_j$ the number of its blocks which are of size $j$. If $\mathcal G$ is a graph, $S$ a set of vertices of $\mathcal G$ and $\underline\pi \in \mathcal P(n)^S$ we define
\[
h_n^{\underline\pi}(\cox(\mathcal G)) = |\{ \rho \in \hom(\cox(\mathcal G), \sym_n):\: \forall v \in S,\, \rho(\sigma_v) \in \stab(\pi_v)\}|.
\]

\begin{prp} \label{coxeter_constraints}
  Let $\mathcal G$ be a graph satisfying the hypotheses of Theorem \ref{Main_coxeter} and $S$ a set of leaves of $\mathcal G_0$. Let $\underline\pi \in \mathcal P_{\le 2}(n)^S$. Then
  \[
  h_n^{\underline\pi}(\cox(\mathcal G)) \le C^{n\log\log(n)} \frac{(n!)^{\gamma(\mathcal G)}} {(n!)^{\frac 1{2n} \sum_{v \in S} \pi_{v, 2}}}.
  \]
\end{prp}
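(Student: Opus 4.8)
### Proof plan

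The plan is to prove Proposition~\ref{coxeter_constraints} by induction on the number of vertices of $\mathcal G$, carefully imitating the structure of the Artin case (Section~\ref{sec_RAAG}) but keeping track of the extra constraints $\underline\pi$ so that the induction goes through despite the fact that removing a vertex-neighbourhood may only drop $\gamma$ by $1/2$. The base case is a single vertex (or a union of trees with no $v_i$'s or $w_j$'s), which is handled by Proposition~\ref{trees_directly} together with a constrained variant of its argument. For the induction step one picks a vertex according to the hypothesis of Theorem~\ref{Main_coxeter}: first dispose of the $v_i$'s (whose $1$-neighbourhood is a tree), then the $w_j$'s (whose $2$-neighbourhood is a tree), and finally what is left is a union of trees handled by the base case.

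First I would treat the removal of a vertex $v=v_i$ whose $1$-neighbourhood is a tree. Writing $\mathcal G' = \mathcal G \setminus \{v\}$, I split $\hom(\cox(\mathcal G),\sym_n)$ according to the partition $\pi_v$ into orbits of $\rho(\sigma_v)$; since $\sigma_v$ is an involution, $\rho(\sigma_v)\in\inv_n$ and this is a partition into parts of size $\le 2$, so $\pi_v\in\mathcal P_{\le 2}(n)$. For each such $\pi_v$ with $\pi_{v,2}=k$ two-cycles, the number of choices for $\rho(\sigma_v)$ is $\card{\inv_{n,k}}\le 2^n n^k$ by Lemma~\ref{lem_invcount}, and by Lemma~\ref{lem_centbd2} (or Lemma~\ref{lem_transitive}) each neighbour of $v$ contributes at most $2^n$ to the centralizer count compared to working in $\cox(\mathcal G')$ with the \emph{added} constraint that $\rho(\sigma_w)$ preserves each orbit of $\rho(\sigma_v)$. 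Here is where the tree hypothesis on $N_1(v)$ is used: the neighbours of $v$ form an independent set together with $v$, so adding the stabilizer constraint coming from $v$ at each neighbour is exactly the kind of constraint appearing in $h_n^{\underline\pi}$, and $\gamma(\mathcal G')\le \gamma(\mathcal G)$ while one new constraint $\pi_w$ of type $(1^{n-2k}2^k)$ gets added at each neighbour $w$. The bookkeeping has to show that $n^k \cdot (n!)^{\gamma(\mathcal G')} / (n!)^{\frac 1{2n}(\cdots + \deg(v)\cdot k)} \le (n!)^{\gamma(\mathcal G)}/(n!)^{\frac1{2n}\sum\pi_{v,2}}$ up to the $C^{n\log\log n}$ factor; the term $n^k$ is absorbed because $k\le n/2$ contributes $\le (n!)^{1/2}$ in the exponent — but one has to check that the drop in $\gamma$ plus the new constraints compensates, which is where the combinatorial heart lies.

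Next I would treat the $w_j$'s, whose $2$-neighbourhood is a tree. This is the genuinely new part: removing $N_1(w)$ can decrease $\gamma$ by only $1/2$ (the edge $\{w,w'\}$ forming a $K_2$ gives $w(\mathcal C)$ a contribution $3/4$, of which $1/4$ may survive). So one cannot afford to forget the generators on $N_1(w)$. Instead, as in the proof of Proposition~\ref{trees_directly}, one records for each neighbour $w'$ of $w$ the pair $(k_1,k_2)$ counting the two-cycles of $\rho(\sigma_{w'})$ on orbits of $\rho(\sigma_w)$ of size $1$ and of size $2$ respectively, and uses the refined centralizer bound displayed there. The second-neighbourhood-is-a-tree hypothesis ensures that the vertices at distance $\ge 2$ from $w$ that are adjacent to $N_1(w)$ are leaves of $\mathcal G_0$, so the constraint one pushes onto them is again a partition into parts of size $\le 2$ — exactly of the form allowed in $h_n^{\underline\pi}$ — and moreover that the local structure near $w$ is tree-like so the optimization governing the exponent is the convex-polytope one solved in Proposition~\ref{trees_directly}. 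After removing all $v_i$'s and all $N_1(w_j)$'s one is left with a union of trees, with constraints $\underline\pi$ at some leaves, and one finishes by invoking (the constrained form of) Proposition~\ref{trees_directly}, noting $\gamma$ of a disjoint union is additive and the leftover constraints only help.

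The main obstacle I expect is the exponent accounting in the $w_j$ step: one must show that the gain from the constraints $-\frac1{2n}\sum_{v\in S}\pi_{v,2}$ that one \emph{imposes} on the surviving leaves exactly offsets the loss of a full unit of $\gamma$ (i.e.\ that only $1/2$ is really lost, the other $1/2$ being recouped through the constraint), and that this balances for every vertex of the relevant convex polytope — essentially re-running the vertex-of-polytope analysis of Proposition~\ref{trees_directly} but now with the extra linear constraints coming from $\underline\pi$ baked in. Making this quantitative, and checking that the accumulated $C^{n\log\log n}$ factors (one factor of the form $6^{n\log\log n}$ or so per removed vertex, plus $2^n$-type factors from centralizers and $n^{O(r)}$ from summing over the partition profiles) stay of the claimed order, is the routine-but-delicate part; the conceptual crux is matching the "$1/2$ lost, $1/2$ constrained" heuristic to the polytope vertices.
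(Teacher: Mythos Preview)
Your inductive scheme—peel off the $v_i$'s, then the $w_j$'s, landing on a forest with constraints at some leaves—matches the paper's overall architecture, and your treatment of the $v_i$'s is essentially Proposition~\ref{gluvert}. But there is a genuine gap at the base of the induction, and it propagates upward.

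The gap is the ``constrained variant of Proposition~\ref{trees_directly}'' that you invoke but do not supply. The polytope argument tracks, for each vertex, its two-cycle counts on the orbit partition of its \emph{parent}; the external constraint $\pi_v$ at a leaf $v$ is a \emph{fixed} partition of $[n]$ unrelated to that orbit structure. For $\sigma_v$ you then have two stabiliser constraints simultaneously—commute with $\sigma_{p(v)}$ \emph{and} preserve $\pi_v$—and the savings from these are not multiplicative: when the two partitions happen to be aligned the second constraint is vacuous. The smallest instance is the line $v_1\!-\!v_3\!-\!v_2$ with constraints at both ends (which does arise, e.g.\ when two neighbours of a removed $v_i$ are siblings in $\mathcal G_0$); one needs
\[
\#\bigl\{(\sigma_1,\sigma_3,\sigma_2)\in\inv_n^3:\ [\sigma_i,\sigma_3]=1,\ \sigma_i\in\stab(\pi_i),\ i=1,2\bigr\}\ \le\ C^{n\log\log n}(n!)^{\,1-\frac{\pi_{1,2}+\pi_{2,2}}{2n}},
\]
and the polytope bookkeeping does not deliver this. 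The paper isolates exactly this as Lemma~\ref{3vert} and proves it by a completely different route: a small/large decomposition on the join $\pi_1\vee\pi_2$, and for blocks of a fixed size $l$ a comparison with the (non-right-angled) Coxeter pentagon carrying an edge labelled~$l$, whose representation growth is supplied by the Liebeck--Shalev asymptotic~\eqref{fuchsian} for Fuchsian groups. That external input is the technical heart of Proposition~\ref{tree_constraints} and hence of the whole result; your plan provides no substitute for it.

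A secondary divergence: for the $w_j$'s the paper does \emph{not} use $(k_1,k_2)$-tracking. It runs the RAAG-style small/large-orbit decomposition on the group generated by the \emph{neighbours} of $w_j$, reduces to orbits all of a fixed size~$l$, and then splits into three cases according to whether $\gamma$ drops by $1$, $3/4$, or $1/2$ on removing $N_1(w_j)$; in the last two cases the shortfall is recouped by invoking the constrained bound inductively on $\mathcal G_0$ at the leaves adjacent to $N_1(w_j)$ (see \eqref{count_rest} and \eqref{dizufzg}). Your $(k_1,k_2)$ scheme might be adaptable here, but it would again collide with the two-constraints-on-one-vertex issue above.
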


The proof of this takes the whole section. We proceed by steps, first dealing with the simplest cases (graphs with one, two or three vertices in Lemmas \ref{1vert}, \ref{2vert} and \ref{3vert}), then deducing the proposition for all trees (Proposition \ref{tree_constraints}). From the case of trees it is not hard to deduce the case where $\mathcal G$ is a tree with isolated vertices glued to the leaves (Proposition \ref{gluvert}). Finally (in subsection \ref{glustar}), we prove the full version using an argument similar to what we did with RAAGs. 

In all proofs below $C$ is a large enough constant which we allow to vary between instances but in the end will be independent of all parameters except the graph $\mathcal G$ (we use this somewhat unusual convention instead of the big O notation because it seems typographically more adapted to our proof). 


\subsection{Basic cases}

\begin{lem} \label{1vert}
  Let $0 \le k \le m$ and $\pi \in \mathcal P_{\le 2}(m)$. Let $N_{k, \pi}^1(m)$ denote the number of involutions in $\sym_m$ which preserve $\pi$ and have $k$ fixed points. Then
  \[
  N_{k, \pi}^1(m) \le C^m (m!)^{\frac 1 2 - \frac{\pi_2}{2m} - \frac k{4m}}.
  \]
\end{lem}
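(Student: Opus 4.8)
The plan is to parametrize an involution $\sigma \in \sym_m$ preserving $\pi$ by what it does to the two kinds of blocks of $\pi$: the $\pi_1$ singletons and the $\pi_2$ pairs (so $\pi_1 + 2\pi_2 = m$). First I would record the elementary constraint: a block of $\pi$ is either fixed setwise and acted on by $\sigma$ in one of a bounded number of ways (a singleton is fixed or not; a pair is fixed pointwise, swapped, or mapped to another pair), or swapped with another block of the same size. So choosing $\sigma$ amounts to choosing a partial matching on the $\pi_1$ singletons among themselves, a partial matching on the $\pi_2$ pairs among themselves, together with (for each matched pair of pairs) one of two gluings and (for each unmatched, unswapped pair) the choice of whether to swap its two elements — this last choice contributes only a factor $2^{\pi_2} \le C^m$. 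Writing $a$ for the number of $2$-cycles coming from matched singletons and $b$ for the number coming from matched pairs, one has $k = \pi_1 - 2a$ (fixed points only arise from unmatched singletons and from pairs that are fixed pointwise, but for the leading-order bound it suffices to bound $k$ from below by $\pi_1 - 2a$, i.e. $a \le (\pi_1 - k)/2$... wait, more carefully $a \le \pi_1/2$ and I will just carry $a$ as a free parameter subject to $2a \le \pi_1$).

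**Bounding the number of choices.** The number of ways to pick $a$ disjoint transpositions among $\pi_1$ points is at most $\binom{\pi_1}{2a}(2a-1)!! \le \pi_1^a \le m^a$, and similarly the number of ways to pick $b$ disjoint ``transpositions of pairs'' among $\pi_2$ pairs, times the gluing choices, is at most $\pi_2^b \cdot 2^b \le m^b$ (absorbing $2^b$ into the ambient $C^m$). Hence, summing over the at most $m^2$ admissible values of $(a,b)$ and bounding the sum by $m^2$ times its largest term,
\[
N_{k,\pi}^1(m) \le C^m \cdot m^{2} \cdot \max_{a,b} \; m^{a+b}, \qquad 2a \le \pi_1 - k,\ \ 2b \le \pi_2 .
\]
Wait — I must be careful about the relation between $a$ and $k$. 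Since $k = |\mathrm{fix}(\sigma)|$ and every unmatched singleton among the $\pi_1$ singletons is a fixed point, we get $k \ge \pi_1 - 2a$, i.e. $a \ge (\pi_1-k)/2$; but for an upper bound on the count we want $a$ large, so the operative constraint is just $a \le \lfloor \pi_1/2 \rfloor$ together with the fact that $\sigma$ can have \emph{at most} $k$ fixed points forces at least $(\pi_1-k)/2$ of the singletons to be matched — this does not reduce the count. The honest way: the count of involutions with exactly $k$ fixed points and preserving $\pi$ is maximized over the free data, and the constraint is that the total fixed-point count is $k$; parametrizing by $(a,b,c)$ where $c$ is the number of pairs fixed pointwise, one has $k = (\pi_1 - 2a) + 2c$ and the count is at most $m^{a} \cdot m^{b} \cdot \binom{\pi_2}{c}$ times $C^m$, with $b \le (\pi_2 - c)/2$, so $a+b+c \le a + \pi_2/2 + c/2$ and $a = (\pi_1 - k)/2 + c$, giving $a+b+c \le (\pi_1-k)/2 + \pi_2/2 + (3/2)c \le \pi_1/2 - k/2 + \pi_2/2 + \ldots$ — the $c$-terms need the constraint $2c \le k$, i.e. $c \le k/2$. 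Then $a+b+c \le (\pi_1-k)/2 + \pi_2/2 + (3/2)(k/2) = \pi_1/2 + \pi_2/2 + k/4 - k/2 + 3k/4 = \pi_1/2 + \pi_2/2 + k/4$. Hmm, that gives a $+k/4$, not $-k/4$.

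**The Stirling conversion — this is the real point.** The exponent I obtained is a power of $m$ of the form $m^{\pi_1/2 + \pi_2/2 + k/4}$ (up to the benign $C^m m^2$). Now convert to a power of $m!$ using $m! = C^m m^m$ (i.e. $\log m! = m\log m + O(m)$): a factor $m^{E}$ equals $(m!)^{E/m} \cdot C^{?}$ only when $E$ is linear in $m$, which it is here since $\pi_1 + 2\pi_2 = m$. So
\[
N_{k,\pi}^1(m) \le C^m (m!)^{\frac{1}{m}\left(\frac{\pi_1}{2} + \frac{\pi_2}{2} + \frac{k}{4}\right)} \cdot C^m .
\]
Using $\pi_1 = m - 2\pi_2$ we get $\tfrac{\pi_1}{2} + \tfrac{\pi_2}{2} = \tfrac{m}{2} - \tfrac{\pi_2}{2}$, so the exponent of $m!$ is $\tfrac12 - \tfrac{\pi_2}{2m} + \tfrac{k}{4m}$ — but the claimed bound has $-\tfrac{k}{4m}$. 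I therefore expect the sign discrepancy to be resolved by a sharper bookkeeping: the correct count of involutions preserving $\pi$ with $k$ fixed points should be governed by the number of \emph{2-cycles}, not fixed points, and more $2$-cycles means \emph{fewer} fixed points and a \emph{larger} count; so the worst case is $k$ small, and the bound decreasing in $k$ is consistent with that. The fix is that the genuine maximum of the count is attained at the extreme where the pairs are \emph{swapped in bulk} ($b$ large, $c=0$), forcing $a = (\pi_1-k)/2$ exactly and $a+b \le (\pi_1 - k)/2 + \pi_2/2 = (m-k)/2 - \pi_2/2 + \ldots$, wait: $(\pi_1-k)/2 + \pi_2/2 = (m - 2\pi_2 - k)/2 + \pi_2/2 = m/2 - \pi_2 - k/2 + \pi_2/2 = m/2 - \pi_2/2 - k/2$. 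Hmm that gives exponent $\tfrac12 - \tfrac{\pi_2}{2m} - \tfrac{k}{2m}$, stronger than claimed. The discrepancy between $-k/(2m)$ and $-k/(4m)$ versus my earlier $+k/(4m)$ tells me the key quantitative step — correctly identifying which configuration of $(a,b,c)$ maximizes $a+b+c$ subject to $k = \pi_1 - 2a + 2c$, $0\le c$, $2b \le \pi_2 - c$ — is exactly the crux of the lemma, and I should do that optimization carefully (it is a tiny linear program) rather than guess; the factor $1/4$ will come out of it, and that arithmetic is the step I expect to be the main obstacle to getting the constant right, though it is not conceptually deep.

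**Summary of the plan.** (1) Reduce to counting the combinatorial data $(a,b,c)$ plus $O(C^m)$ binary choices. (2) Bound the number of configurations with given $(a,b,c)$ by $C^m m^{a+b+c}$. (3) Impose $k = \pi_1 - 2a + 2c$ and the range constraints, and maximize $a+b+c$: this is where the coefficient $\tfrac12 - \tfrac{\pi_2}{2m} - \tfrac{k}{4m}$ is produced. (4) Multiply by the $O(m^2)$ choices of $(a,b,c)$, absorb everything polynomial and everything $C^m$ into the final constant, and convert $m^{(\text{linear in } m)}$ to $(m!)^{(\text{that slope})}$ via Stirling. The only real work is step (3); everything else is routine. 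I would also double-check the edge cases $\pi_2 = 0$ (recovering the standard bound $\#\{\text{involutions with }k\text{ fixed points}\} \le C^m (m!)^{1/2 - k/(4m)}$, which matches Lemma~\ref{lem_invcount}-type estimates) and $k = m$ (only the identity, bound trivially $1 \le C^m (m!)^{1/2 - \pi_2/(2m) - 1/4}$, valid for $m$ large since the exponent of $m!$ is then $\le 1/4$ but still positive — fine).
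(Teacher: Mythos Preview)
Your approach is essentially the paper's own: decompose $\sigma$ according to what it does on the $\pi_1$ singletons and the $\pi_2$ pairs, record the parameters $(a,b,c)$ (in the paper's notation $k_1=\pi_1-2a$ and $k_2=c$), bound the number of configurations by $C^m$ times a power of $m$, maximise that power, and convert via Stirling. The paper's key inequality is exactly $k_1+k_2=k-k_2\ge k/2$, i.e.\ your constraint $c\le k/2$.

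Your confusion in step~(3) has a single, concrete source: you bound $\binom{\pi_2}{c}$ by $m^c$ and then optimise $a+b+c$. That is the mistake. The quantity $\binom{\pi_2}{c}$ is at most $2^{\pi_2}\le C^m$; it belongs in the $C^m$ prefactor, not in the $m^{(\cdots)}$ part. Once you do that, the exponent to maximise is $a+b$, and with $a=(\pi_1-k)/2+c$, $b\le(\pi_2-c)/2$, $0\le c\le k/2$ you get
\[
a+b\;\le\;\frac{\pi_1-k}{2}+c+\frac{\pi_2-c}{2}
\;=\;\frac{\pi_1-k}{2}+\frac{\pi_2}{2}+\frac{c}{2}
\;\le\;\frac{\pi_1}{2}+\frac{\pi_2}{2}-\frac{k}{4}
\;=\;\frac{m}{2}-\frac{\pi_2}{2}-\frac{k}{4},
\]
which after Stirling gives precisely $(m!)^{\frac12-\frac{\pi_2}{2m}-\frac{k}{4m}}$ as claimed. (Your later computation that produced $-k/(2m)$ was the special case $c=0$, which is not the maximiser; the maximum is at $c=k/2$.) With this one-line fix your sketch is a complete and correct proof, and it coincides with the paper's argument up to the cosmetic choice of writing the bound as $((\pi_1-k_1)/2)!\cdot((\pi_2-k_2)/2)!$ rather than $m^{a+b}$.
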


\begin{proof}
   Let us order the involutions $\sigma$ that preserve $\pi$ by the numbers $k_1$ and $k_2$ of $1$-blocks and $2$-blocks respectively of $\pi$ that contained in the fixed point set of $\sigma$. So $k=2k_2+k_1$. Note that the number of choices for the decomposition of the parts of $\pi$ into fixed blocks and non-fixed blocks is exponential in $m$.
  
   The involution $\sigma$ preserves the unions of $1$- and $2$-blocks respectively. The number of possibilities for the action of $\sigma$ on the union of $1$-blocks is at most
  \[
  C^m\left( \frac{\pi_1 - k_1} 2 \right)! = C^m \left( \frac{m - k_1} 2 - \pi_2 \right)!.
  \]
  Indeed, this is just the number of involutions on $\pi_1$ points fixing $k_1$ of them (see Lemma \ref{lem_invcount}). The number of choices for the action on the union of $2$-blocks is at most
  \[
  C^m\left( \frac{\pi_2 - k_2} 2 \right)! .
  \]
  Multiplying these two inequalities we get that the number of choices for $\sigma$ is at most
  \begin{align*}
  C^m\left( \frac m 2 - \frac{\pi_2} 2 - \frac{k_1 + k_2} 2 \right)! &= C^m \left( \frac m 2 - \frac{\pi_2} 2 - \frac{k - k_2} 2 \right)! \\
  & \le C^m \left( \frac m 2 - \frac{\pi_2} 2 - \frac k 4 \right)!,
  \end{align*}
 which, together with Stirling's approximation, finishes the proof. 
\end{proof}

\begin{lem} \label{2vert}
  Let $m \ge 1$ and $\pi \in \mathcal P_{\le 2}(m)$. Let $N_\pi^2(m)$ denote the number of pairs of commuting involutions in $\sym_m$, such that the first one preserves $\pi$. Then
  \[
  N_\pi^2(m) \le C^m (m!)^{\frac 3 4 - \frac{\pi_2}{2m}}. 
  \]
\end{lem}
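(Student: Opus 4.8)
The plan is to stratify the pairs $(\sigma,\tau)$ of commuting involutions in $\sym_m$ according to the number $k$ of fixed points of $\sigma$; since $\sigma$ is an involution this forces $m-k$ to be even, and we write $l=(m-k)/2$ for its number of transpositions. We will count the admissible $\sigma$'s using Lemma \ref{1vert} and, for each such $\sigma$, count the involutions $\tau$ commuting with it, that is, the involutions in the centraliser $Z_{\sym_m}(\sigma)\cong \sym_k \times (\ZZ/2\ZZ \wr \sym_l)$, where the first factor acts on the fixed points of $\sigma$ and the second (a hyperoctahedral group) on the $2l$ points it moves.

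First I would record two elementary counts of involutions (all counts below include the identity). Summing Lemma \ref{lem_invcount} over the possible number of $2$-cycles shows that $\sym_j$ contains at most $C^j(j!)^{1/2}$ involutions. Next, an element $(\underline\eps,w)\in (\ZZ/2\ZZ)^l\rtimes \sym_l = \ZZ/2\ZZ \wr \sym_l$ squares to the identity precisely when $w^2=\id$ and $\underline\eps$ is constant on the cycles of $w$; if $w$ has $j$ transpositions this leaves $2^{l-j}$ choices of $\underline\eps$, so by Lemma \ref{lem_invcount} and Stirling the number of involutions in $\ZZ/2\ZZ\wr\sym_l$ is at most $\sum_{j\le l/2} 2^l l^j\cdot 2^{l-j}\le C^l(l!)^{1/2}$. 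Since in a direct product an element squares to the identity iff both of its components do, the centraliser $Z_{\sym_m}(\sigma)$ of a $\sigma$ with $k$ fixed points contains at most $C^m(k!)^{1/2}(l!)^{1/2}$ involutions.

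Combining this with Lemma \ref{1vert} gives
\[
N_\pi^2(m)\le \sum_{\substack{0\le k\le m\\ m-k\text{ even}}} C^m\,(m!)^{\frac12-\frac{\pi_2}{2m}-\frac{k}{4m}}\cdot C^m\,(k!)^{1/2}\left(\left(\tfrac{m-k}{2}\right)!\right)^{1/2},
\]
so it remains to bound each summand by $C^m(m!)^{3/4-\pi_2/(2m)}$, i.e. to check that
\[
(m!)^{-\frac{k}{4m}}(k!)^{1/2}\left(\left(\tfrac{m-k}{2}\right)!\right)^{1/2}\le C^m(m!)^{1/4}.
\]
Taking logarithms and using Stirling in the form $\log n!=n\log n-n+O(\log n)$, the left-hand side is at most $-\tfrac{k}{4}\log m+\tfrac{k}{2}\log k+\tfrac{m-k}{4}\log\tfrac{m-k}{2}+O(m)$; bounding $\log k$ and $\log\tfrac{m-k}{2}$ by $\log m$ collapses the coefficient of $m\log m$ to exactly $\tfrac14$, which is $\log(m!)^{1/4}+O(m)$. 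Summing the at most $m+1$ terms absorbs a further polynomial factor into $C^m$, which completes the proof.

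The only ingredient that is not completely routine is the count of involutions in the hyperoctahedral group $\ZZ/2\ZZ\wr\sym_l$, though this is a short computation; the one place where care is needed is the final Stirling estimate, where the exponents of $m\log m$ produced by the count of $\sigma$'s and by the count of commuting $\tau$'s must be seen to balance, so as to yield precisely the exponent $3/4$ and not something larger.
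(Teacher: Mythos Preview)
Your proof is correct and follows essentially the same approach as the paper: stratify by the number $k$ of fixed points of $\sigma$, invoke Lemma~\ref{1vert} for the count of admissible $\sigma$'s, bound the number of involutions in the centraliser of $\sigma$, and balance the exponents via Stirling. If anything you are more careful than the paper, which writes the involution count in $Z_{\sym_m}(\sigma)$ as $|\inv_k|\cdot|\inv_{m-k}|$ (an equality that is not literally true, though it gives a valid upper bound), whereas you correctly identify the centraliser as $\sym_k\times(\ZZ/2\ZZ\wr\sym_l)$ and count involutions there directly.
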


\begin{proof} If an involution $\sigma$ has $k$ fixed points, then
\[ \card{Z_{\inv_m}(\sigma)} = \card{\inv_k} \cdot \card{\inv_{m-k}} \leq C^m \; k^{k/2} \; (n-k)^{(n-k)/2} \]
(see Lemma \ref{lem_invcount}). Combining this with Lemma \ref{1vert}, we obtain
\[N_\pi^2(m) = \sum_{k=0}^{\floor{m/2}} N_\pi^1(m)\cdot \card{Z_{\inv_m}(\sigma)} \leq C^m (m!)^{\frac 3 4 - \frac{\pi_2}{2m}}.\]
\end{proof}

The last case we need for the induction is harder, and we will need to use results on Fuchsian groups from \cite{Liebeck_Shalev} to deduce it. 

\begin{lem} \label{3vert}
  Let $\mathcal G_0$ be the graph on three vertices $v_1, v_2, v_3$ where $v_3$ is adjacent to both $v_1$ and $v_2$. Let $m \ge 1$ and $\underline\pi = (\pi_{v_1}, \pi_{v_2}) \in \mathcal P_{\le 2}(m)^2$. Let
  \[
  N_{\underline\pi}^3(m) = h_m^{\underline\pi}(\cox(\mathcal G_0)).
  \]
  Then
  \[
  N_{\underline\pi}^3(m) \le C^{m\log\log m} (m!)^{1 - \frac{\pi_{1,2} + \pi_{2,2}}{2m}}.
  \]
\end{lem}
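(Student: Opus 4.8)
Here is how I would attack Lemma~\ref{3vert}.

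\medskip

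First, I would read a homomorphism $\rho\in\hom(\cox(\mathcal G_0),\sym_m)$ off the three involutions $A=\rho(\sigma_{v_1})$, $B=\rho(\sigma_{v_2})$, $C=\rho(\sigma_{v_3})$. Since $v_3$ is adjacent to both other vertices, $\sigma_{v_3}$ is central, so $\cox(\mathcal G_0)\cong D_\infty\times\ZZ/2\ZZ$ is virtually cyclic: the datum is a dihedral pair $(A,B)$ together with an involution $C$ centralising $\langle A,B\rangle$, under the constraints $A\in\stab(\pi_{v_1})$, $B\in\stab(\pi_{v_2})$. Set $k_i=\pi_{v_i,2}$. The crucial reformulation is that $\stab(\pi_{v_i})=Z_{\sym_m}(\eta_i)$, where $\eta_i$ is the canonical involution with $k_i$ transpositions supported on the $2$-blocks of $\pi_{v_i}$; hence a constrained $\rho$ is exactly the restriction of a homomorphism of the right-angled Coxeter group of the path $\epsilon_1-v_1-v_3-v_2-\epsilon_2$, i.e. of $\cox(\mathcal L_5)$, that sends the two leaves to the \emph{fixed} involutions $\eta_1,\eta_2$. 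This is where Fuchsian groups enter: $\cox(\mathcal L_5)$ is the reflection group of a right-angled hyperbolic pentagon with one ideal vertex, a cofinite Fuchsian group with $\gamma(\mathcal L_5)=3/2$, so Liebeck--Shalev's asymptotic (applied exactly as in Section~\ref{Fuchsian_RACG}) gives $h_m(\cox(\mathcal L_5))\le C^{\sqrt m}(m!)^{3/2}$; this is the ``particular case'' we need.

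\medskip

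When the dihedral group $\langle\eta_1,\eta_2\rangle$ acts on $\{1,\dots,m\}$ with all orbits of size $>\log m$, the lemma follows easily: counting the pairs of involutions simultaneously conjugate to $(\eta_1,\eta_2)$ gives
\[
N^3_{\underline\pi}(m)\ \le\ \frac{|Z_{\sym_m}(\langle\eta_1,\eta_2\rangle)|}{m!}\,h_m(\cox(\mathcal L_5)),
\]
and since there are at most $m/\log m$ orbits one has $|Z_{\sym_m}(\langle\eta_1,\eta_2\rangle)|\le (m/\log m)!\cdot m^{m/\log m}\le C^m$, whence $N^3_{\underline\pi}(m)\le C^m(m!)^{1/2}$, which is stronger than needed. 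All the work is in the opposite situation. There I would decompose $\{1,\dots,m\}$ into the orbits of the full image $\langle\eta_1,A,C,B,\eta_2\rangle$; on an orbit $o$ of size $\ell$ one obtains a transitive action of $\cox(\mathcal L_5)$ in which $\epsilon_i$ maps to an involution whose number of transpositions $j_i(o)$ equals the number of $\eta_i$-transpositions inside $o$, with $\sum_o j_i(o)=k_i$. Thus $N^3_{\underline\pi}(m)$ is bounded by a sum, over set partitions of $\{1,\dots,m\}$, of products of constrained transitive-action counts $T_\ell(j_1,j_2)$, each controlled by the sharp count $t_\ell(\cox(\mathcal L_5))\le C^{\sqrt\ell}(\ell!)^{3/2}/\ell$ from Liebeck--Shalev together with an estimate of how much the cycle-type constraints on $\epsilon_1,\epsilon_2$ cut it down — this last estimate being where the bounded-part-partition bookkeeping familiar from Section~\ref{sec_RAAG} and Proposition~\ref{trees_directly} comes in, and where the factor $C^{m\log\log m}$ is produced. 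A short optimisation over the orbit profile then yields the exponent $1-\frac{k_1+k_2}{2m}$, with the two degenerate regimes ($C$ almost fixed-point-free, or $\langle A,B\rangle$ almost trivial) also checked directly from the virtual cyclicity of $\cox(\mathcal G_0)$ and from Lemmas~\ref{1vert} and~\ref{2vert}.

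\medskip

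The main obstacle — and the reason this is genuinely harder than Lemmas~\ref{1vert} and~\ref{2vert} — is the \emph{simultaneous} bookkeeping of the two constraints. A naïve bound that counts the choices for $A$, for $B$ and for $C$ in turn overshoots the target by a power $(m!)^{\Omega(1)}$, because it misses that forcing $\langle\eta_1,\eta_2\rangle$ (equivalently $\langle A,B\rangle$) to have many small orbits also forces the two partition constraints to be ``aligned'', and hence cheap. It is precisely to extract this gain that one must use the genuine Liebeck--Shalev asymptotic — not the rough bound $h_m\le C^{m\log\log m}(m!)^{\gamma}$ being proved in general — and combine it with a careful orbit decomposition; I expect the bookkeeping in that step to be the bulk of the proof.
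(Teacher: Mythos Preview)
Your reformulation is exactly right and matches the paper: the constraint $\rho(\sigma_{v_i})\in\stab(\pi_{v_i})$ is the same as $[\rho(\sigma_{v_i}),\eta_i]=1$, so a constrained $\rho$ is the restriction of a representation of $\cox(\mathcal L_5)$ with the two leaves sent to the fixed involutions $\eta_1,\eta_2$. The paper also splits according to the sizes of the blocks of $\lambda=\pi_{v_1}\vee\pi_{v_2}$ (equivalently the orbits of $\langle\eta_1,\eta_2\rangle$), and your large-orbit estimate
\[
N^3_{\underline\pi}(m)\ \le\ \frac{|Z_{\sym_m}(\langle\eta_1,\eta_2\rangle)|}{m!}\,h_m(\cox(\mathcal L_5))
\]
is precisely the paper's argument (the paper bounds $h_m(\cox(\mathcal L_5))$ via Proposition~\ref{trees_directly} rather than Liebeck--Shalev, but either works).

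The divergence, and the gap in your outline, is in the small-orbit case. You propose to decompose by orbits of the \emph{full} image $\langle\eta_1,A,C,B,\eta_2\rangle$ and to bound the constrained transitive count $T_\ell(j_1,j_2)$ by $t_\ell(\cox(\mathcal L_5))$ times ``how much the cycle-type constraints cut it down''. That last factor is the whole difficulty, and you give no mechanism for computing it; appealing to the bookkeeping of Section~\ref{sec_RAAG} or Proposition~\ref{trees_directly} does not obviously yield the required exponent. The paper avoids this entirely by a sharper and more concrete idea: it keeps the decomposition by blocks of $\lambda$ (not orbits of the full image), further refines so that all blocks have the \emph{same} size $l$, and then observes that on such blocks $\eta_1\eta_2$ has order $l$ (or $l/2$ when all sub-blocks are $2$-blocks). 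Hence the five involutions $\eta_1,\sigma_{v_1},\sigma_{v_3},\sigma_{v_2},\eta_2$ realise a representation of the \emph{non-right-angled} pentagon Coxeter group $\Gamma_l$ with one edge labeled $l$ (resp.\ $l/2$). Liebeck--Shalev applied to $\Gamma_l$ gives $h_k(\Gamma_l)\le C^k(k!)^{3/2-1/(2l)}$, and dividing by the $\ge C^{-m}(k!)^{1-1/l}$ conjugates of $(\eta_1,\eta_2)$ yields $N^3_{\underline\pi_{=l}}(k)\le C^k(k!)^{1/2+1/(2l)}$ (resp.\ $(k!)^{1/2}$), which is exactly the target since on size-$l$ blocks $(\pi_{1})_1+(\pi_{2})_1=2k/l$ (resp.\ $0$). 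So the ``constraint cut-down'' you allude to is obtained not by bookkeeping but by passing to a smaller Fuchsian group; this is the missing idea in your sketch.
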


\begin{proof}
  Let $\lambda = \pi_1 \vee \pi_2$ (the set partition generated by $\pi_1$ and $\pi_2$). Then $N_{\underline\pi}^3(m)$ depends only on $\pi_{1,2}$, $\pi_{2,2}$ and the integer partition associated to $\lambda$, as any two $\pi_1, \pi_2$ that generate the same partition $\lambda$ are conjugated to each other by an element of $\sym_m$.

  We will first reduce to the case where all blocks of $\lambda$ have the same size $\le \log(m)$. Using the proof of the estimate \eqref{dec_small_large} (which also works when $\Gamma$ is a Coxeter group, using the second statement in Lemma \ref{lem_centbd2}) we obtain that
  \[
  N_{\underline\pi}^3(m) \le C^m \sum_{k=0}^m N_{\underline\pi_{\ge \log(m)}}^3(m-k) N_{\underline\pi_{\le \log(m)}}^3(k)
  \]
  where $\pi_{\le l}$ denotes the partition $\pi$ induces on the union of blocks of $\lambda$ of size at most $l$.

  \medskip

  We first estimate the factor $N_{\underline\pi_{\ge \log(m)}}^3(m-k)$. We will do this by comparing $N_{\underline\pi_{\ge \log(m)}}^3(m-k)$ to the homomorphism count of a larger group: the RACG defined by the line on five vertices.
  
  Note that $(\pi_1)_{\ge \log(m)}$ and $(\pi_2)_{\ge \log(m)}$ define involutions in $\sym_{m-k}$. We will denote these involutions by $\sigma_{\pi_1},\sigma_{\pi_2}\in\sym_{m-k}$ respectively. Let $H$ be the centraliser in $\sym_{m-k}$ of $\langle \sigma_{\pi_1}, \sigma_{\pi_2} \rangle$. Then, since $\lambda_{\ge \log(m)}$ is the partition of $\{1,\ldots,m-k\}$ into orbits of $\langle \sigma_{\pi_1}, \sigma_{\pi_2} \rangle$ we have that 
  \[
  |H| \le  \prod_{j=\log(m)}^m (\lambda_j)! \cdot j \le C^{m\log\log m},
  \]
  where we have used Lemma \ref{lem_transitive}. Thus 
  \[\card{\sym_{\log(m)}^{\lambda_{\log(m)}} \times \ldots \times \sym_m^{\lambda_m}} \geq C^{-m\log\log m} (m-k)!\]  
  and hence, using Proposition \ref{nb_involutions}, there are at least $C^{-m\log\log m}(m-k)!$ pairs of involutions $\sigma_{\pi_1} = \sigma_4, \sigma_{\pi_2} = \sigma_5$ that generate a partition with profile $\lambda$. 
  
  On the other hand, the involutions $\sigma_i$, $1 \le i \le 5$ satisfy $\sigma_4\sigma_1 = \sigma_1\sigma_4$ and $\sigma_5\sigma_2 = \sigma_2\sigma_5$. So we have that
  \[
  |\{\sigma_4, \sigma_5 \text{ as above}\}| \cdot N_{\underline\pi_{\ge \log(m)}}^3(m-k)  \le h_{m-k}(\mathcal L_5)
  \]
  where $\mathcal L_5$ is the line on five vertices (numbered 4,1,3,2,5 in order). We thus know by Proposition \ref{trees_directly} that
  \[
  (m-k)! N_{\underline\pi_{\ge \log(m)}}^3(m-k) \le C^{m\log\log m} ((m-k)!)^{\frac 3 2}
  \]
  and hence
  \begin{equation} \label{large_part_3vert}
    N_{\underline\pi_{\ge \log(m)}}^3(m-k) \le C^{m\log\log m} ((m-k)!)^{\frac 1 2}.
  \end{equation}

  \medskip

  Now we deal with $N_{\underline\pi_{\le \log(m)}}^3(k)$. Using the same separating trick $\log\log(m)$ times (with a ``divide and conquer'' approach) we get that
  \[
  N_{\underline\pi_{\le \log(m)}}^3(k) \le C^{m\log\log(m)} \sum_{k_1+\cdots+k_{\log m}=k} \prod_{l=1}^{\log m} N_{\underline\pi_{=l}}^3(k_i). 
  \]
  To estimate $N_{\underline\pi_{=l}}^3(k)$ we use the same method as above, but with a non-right angled Coxeter group to which we apply the results of \cite{Liebeck_Shalev}. As above, we see that the number of $\sigma_4, \sigma_5$ is at least
  \[
  C^{-m}(k!) / (k!)^{1/l}. 
  \]
  
  Again set $\sigma_4=\sigma_{\pi_1}$ and $\sigma_5=\sigma_{\pi_2}$. 
  
  We now have to subdivide the problem further. We first consider the case where each block of $\lambda$ contains two 1-blocks of $\pi_1$ or $\pi_2$ In this case $\sigma_4, \sigma_5$ satisfy the relation $(\sigma_4\sigma_5)^l = 1$, and it follows that
  \[
  |\{\sigma_4, \sigma_5 \text{ as above}\}| \cdot N_{\underline\pi_{=l}}^3(k) \le h_k(\Gamma_l)
  \]
  where $\Gamma_l$ is the Coxeter group associated to the pentagon with one edge marked by $l$. By Liebeck and Shalev's asymptote \eqref{fuchsian} we thus get that
  \[
  (k!)^{1 - \frac 1 l} \cdot N_{\underline\pi_{=l}}^3(k) \le C^k (k!)^{\frac 3 2 - \frac 1{2l}}
  \]
  and we finally obtain that in this case:
  \begin{equation} \label{small_1_3vert}
    N_{\underline\pi_{=l}}^3(k) \le C^k (k!)^{\frac 1 2 + \frac 1 {2l}}.
  \end{equation}
  This is the result we wanted since we have $(\pi_{1, =l})_1 + (\pi_{2, =l})_1 = 2k/l$. 

  Finally, we deal with the case where each block of $\lambda$ contains only 2-blocks of either $\pi_1$ or $\pi_2$. In this case $l$ is necessarily even and $\sigma_4, \sigma_5$ must satisfy the relation $(\sigma_4\sigma_5)^{l/2} = 1$. Using the same approach as above we get that
  \[
  (k!)^{1 - \frac 1 l} \cdot N_{\underline\pi_{=l}}^3(k) \le C^k (k!)^{\frac 3 2 - \frac 1 l}
  \]
  and it follows that
  \begin{equation} \label{small_2_3vert}
    N_{\underline\pi_{=l}}^3(k) \le C^k (k!)^{\frac 1 2}
  \end{equation}
  in this case, which finishes the proof (note that $(\pi_{1, =l})_1 + (\pi_{2, =l})_1 = 0$ in this case). 
\end{proof}


\subsection{Counting representations with constraints for trees}

\begin{prp} \label{tree_constraints}
  Let $\mathcal G_0$ be a tree and $S$ a set of non-adjacent leaves of $\mathcal G_0$. Let $\underline\pi \in \mathcal P_{\le 2}(n)^S$. Then
  \begin{equation} \label{eqn_tree_constraints}
  h_n^{\underline\pi}(\cox(\mathcal G_0)) \le C^{n\log\log(n)} \frac{(n!)^{\gamma(\mathcal G)}} {(n!)^{\frac 1{2n} \sum_{v \in S} \pi_{v, 2}}}.
  \end{equation}
\end{prp}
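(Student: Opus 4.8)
The argument will be an induction on the number of vertices of $\mathcal G_0$. It is convenient to first reduce to the case where $S$ is \emph{all} of the leaves of $\mathcal G_0$: any leaf of $\mathcal G_0$ not already in $S$ can be adjoined to $S$ with the all-singleton partition, which changes neither side of the inequality (its stabiliser is all of $\sym_n$, and it has no $2$-block), and for $|\mathcal G_0|\ge 3$ any two leaves are non-adjacent, so this is legitimate. Small $n$ and the edgeless graph are absorbed into $C$ as usual, and the base cases $|\mathcal G_0|\le 3$ are then precisely Lemmas \ref{1vert}, \ref{2vert} and \ref{3vert}, bearing in mind that $\gamma(K_1)=\tfrac12$, $\gamma(K_2)=\tfrac34$ and $\gamma(\mathcal L_3)=1$.

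For the inductive step, fix a longest path in $\mathcal G_0$, let $v_0$ be one of its endpoints and $v_1$ its neighbour. By maximality of the path every neighbour of $v_1$ except (at most) one — call it $w$ — is a leaf; write $u_1=v_0,u_2,\dots,u_t$ for the leaf-neighbours of $v_1$ and put $\mathcal G_0'=\mathcal G_0\setminus\{u_1,\dots,u_t,v_1\}$, a tree (or, when $w$ does not exist, a forest handled one component at a time) in which $w$ is a vertex of small degree. One checks the combinatorial identity $\gamma(\mathcal G_0)=\tfrac t2+\gamma(\mathcal G_0')$ when $t\ge 2$ — then the $u_i$ must be used as singletons, which squeezes $v_1$ out of any optimal clique collection — and a slightly more delicate inequality (comparing also with $\mathcal G_0\setminus\{v_0,v_1,w\}$, so that the drop is $\tfrac34$ or $\tfrac14$ according to whether $w$ is avoidable by an optimal collection) when $t=1$. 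A homomorphism $\rho$ of $\cox(\mathcal G_0)$ restricts to one of $\cox(\mathcal G_0')$ together with the data of $\rho(\sigma_{v_1})\in\cent_{\inv_n}(\rho(\sigma_w))$ and $\rho(\sigma_{u_i})\in\cent_{\inv_n}(\rho(\sigma_{v_1}))\cap\stab(\pi_{u_i})$. The elementary involution counts of Lemmas \ref{lem_invcount}, \ref{lem_transitive}, \ref{1vert} and \ref{2vert} bound the number of choices for each $\sigma_{u_i}$, given $\sigma_{v_1}$, by $C^n(n!)^{1/2}/(n!)^{\pi_{u_i,2}/(2n)}$; this is exactly where the denominator factor of the statement is produced, and it is benign.

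The real difficulty is that counting $\sigma_{v_1}$ freely contributes a spurious factor $(n!)^{1/2}$ — precisely the amount by which $\gamma(\mathcal G_0)-\gamma(\mathcal G_0')$ falls short of $\tfrac{t+1}{2}$ — so $\sigma_{v_1}$ cannot be treated in isolation: it must be coupled with $\sigma_w$, which commutes with it. The mechanism is the small/large-orbit dichotomy of Lemma \ref{dec_small_large_lem} (valid for Coxeter groups by the second statement of Lemma \ref{lem_centbd2}) applied at $w$: when the orbits of $\langle\rho(\sigma_w),\dots\rangle$ are large there are few of them and hence few extensions to $\sigma_{v_1}$, while when they are small one runs the refined ``two-dimensional'' orbit accounting of Proposition \ref{trees_directly} on $\mathcal G_0'$ — recording how $\sigma_{v_1}$ acts on the size-$1$ and size-$2$ orbits of its parent separately, and carrying the orbit-type of $\rho(\sigma_{v_1})$ along in the induction — while the few genuinely boundary configurations are absorbed through Lemma \ref{3vert}, and hence ultimately through the Fuchsian asymptotics of \cite{Liebeck_Shalev}. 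Making this coupling quantitative — showing that whenever $\gamma$ fails to drop by a full $\tfrac{t+1}{2}$ the image of $\sigma_{v_1}$, or of the relevant orbit-block of $\sigma_w$, is correspondingly constrained — is the heart of the matter and the step I expect to demand the most care; everything else is bookkeeping with the involution estimates already assembled.
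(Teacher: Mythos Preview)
Your plan correctly identifies the base cases and the central difficulty --- the ``spurious'' factor $(n!)^{1/2}$ coming from $\sigma_{v_1}$ --- but the mechanism you propose to remove it (small/large-orbit dichotomy, the two-dimensional orbit accounting of Proposition~\ref{trees_directly}, and an appeal to the Fuchsian estimates) is far heavier than what is needed, and you yourself flag that step as incomplete. The paper's proof does \emph{not} use any of that machinery for trees.

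The idea you are missing is that the constraint in the statement is precisely the device that makes the induction close. Concretely, in the case $t=1$ (your $v_1$ has exactly one leaf neighbour $v_0$ and one non-leaf neighbour $w$), do \emph{not} remove $v_1$: remove only $v_0$. Then $v_1$ becomes a leaf of $\mathcal G_0' = \mathcal G_0\setminus\{v_0\}$, and you impose on it the \emph{new} constraint $\pi_{v_1} := \pi_{\sigma}$, the cycle partition of $\sigma = \rho(\sigma_{v_0})$. If $\sigma$ has $k$ fixed points then $(\pi_\sigma)_2 = (n-k)/2$, so the induction hypothesis applied to $\mathcal G_0'$ carries a denominator factor $(n!)^{(n-k)/(4n)}$. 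Meanwhile Lemma~\ref{1vert} bounds the number of such $\sigma$ (still subject to the original constraint $\pi_{v_0}$) by $C^n(n!)^{1/2 - \pi_{v_0,2}/(2n) - k/(4n)}$. Summing over $k$, the exponents $-k/(4n)$ and $-(n-k)/(4n)$ combine to a constant $-1/4$, which exactly matches $\gamma(\mathcal G_0) - \gamma(\mathcal G_0') = 1/4$. No orbit dichotomy, no Fuchsian input, no optimisation: the $k$-dependence cancels identically.

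For $t\ge 2$ the paper again avoids a free count of $\sigma_{v_1}$: it first establishes the statement for stars directly (pairing two of the constrained leaves with the centre via Lemma~\ref{3vert} and bounding the remaining leaves by Lemma~\ref{1vert}, which yields the tight exponent $d/2$ rather than $(d+1)/2$), and then in the inductive step simply factors $h_n^{\underline\pi}(\cox(\mathcal G_0)) \le h_n^{\underline\pi^1}(\cox(\mathcal G_1))\cdot h_n^{\underline\pi^2}(\cox(\mathcal G_2))$ with $\mathcal G_2$ the removed star. Your line ``bound each $\sigma_{u_i}$ given $\sigma_{v_1}$ by $C^n(n!)^{1/2}/(n!)^{\pi_{u_i,2}/(2n)}$'' does not exploit the commutation with $\sigma_{v_1}$ at all and therefore cannot recover the missing $(n!)^{1/2}$ without the star estimate or Lemma~\ref{3vert} doing the work.
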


\begin{proof}
  We prove this by induction on the number of vertices. When there are only one, two or three vertices we may just apply Lemma \ref{1vert} (by summing over $k$), Lemma \ref{2vert} or Lemma \ref{3vert}.

  \medskip

  We then start by proving \eqref{eqn_tree_constraints} for stars with at least three leaves. For this we note that if $\mathcal G_0$ is a star graph, $S = \{v_1, \ldots, v_d\}$ its set of leaves and $\underline\pi \in \mathcal P_{\le 2}(n)^S$ we have 
  \[
  h_n^{\underline\pi}(\cox(\mathcal G_0)) \le N_{(\pi_{v_1}, \pi_{v_2})}^3(n) \cdot \prod_{j=3}^d N_{\pi_{v_j}}^1(n)
  \]
  and estimating the right-hand side using Lemmas \ref{1vert} and \ref{3vert} gives the result in this case. 

  \medskip

  Now assume that $\mathcal G_0$ has at least four vertices. Let $v$ be a leaf and $w$ its unique neighbour. We may choose $v$ so that we are in one of the two following situations:
  \begin{enumerate}
  \item \label{more_leaves} $w$ is adjacent to at least two leaves ;

  \item \label{one_leaf} $v$ is the only leaf adjacent to $w$ and the latter has valency 2.
  \end{enumerate}
  (To prove this just observe that if this is not the case for $v$, then taking another leaf $v'$ at maximal distance from $v$ one of the two is satisfied by $v'$).

  Assume first that we are in the situation where \eqref{more_leaves} holds. Let $v=v_1, \ldots, v_d$ be the leaves of $\mathcal G_0$ adjacent to $w$. Let
  \[
  \mathcal G_1 = \mathcal G_0 \setminus \{w, v_1, \ldots, v_d\}, \, S_1 = S \cap \mathcal G_1
  \]
  and $\mathcal G_2$ the graph induced on $\{w, v_1, \ldots, v_d\}$ with $S_2 = \{v_1, \ldots, v_d\}$. Let $\underline\pi^1 \in \mathcal P_{\le 2}(n)^{S_1}$ and $\underline\pi^2 \in \mathcal P_{\le 2}(n)^{S_2}$ such that $\underline\pi = (\underline\pi^1, \underline\pi^2)$. We have
  \begin{align*}
  h_n^{\underline\pi}(\cox(\mathcal G_0)) &\le h_n^{\underline\pi^1}(\cox(\mathcal G_1)) \cdot h_n^{\underline\pi^2}(\cox(\mathcal G_2)) \\
  &\le C^{n\log\log n} (n!)^{\gamma(\mathcal G_1) - \frac 1{2n} \sum_{u \in S_1} \pi_{u, 2}} \cdot (n!)^{\frac d 2 - \frac 1{2n} \sum_{u \in S_2} \pi_{u, 2}} \\
  &= C^{n\log\log n} (n!)^{\gamma(\mathcal G_1) + \frac d 2 - \frac 1{2n} \sum_{u \in S} \pi_{u, 2}}
  \end{align*}
  where the inequality on the second line follows from the induction hypothesis applied to $\mathcal G_1$ and $\mathcal G_2$. It remains to see that
  \[
  \gamma(\mathcal G_0) = \gamma(\mathcal G_1) + \frac d 2. 
  \]
  To do this we first note that any clique collection containing $w$ cannot maximise $w$: if $\mathcal C = \mathcal C_1 \cup \cdots \cup \mathcal C_r$ is one such collection with $w\in \mathcal C_1$ then we have
  \[
  w(\{v_1\} \cup \cdots \cup \{v_d\} \cup \mathcal C_2 \cup \cdots \cup \mathcal C_r) = w(\mathcal C) - \left(1- 2^{-|\mathcal C_i|}\right) + \frac d 2 > w(\mathcal C).
  \]
  So if $\mathcal C$ maximises $w$ it is of the form $\mathcal C_1 \cup \mathcal C_2$ where $\mathcal C_1$ is a clique in $\mathcal G_1$ (necessary maximising $w$ there) and $\mathcal C_2$ in $\mathcal G_2$ is a maximising clique in $\mathcal G_2 \setminus \{w\}$. The latter has weight $d/2$ and the equality we were after follows. 

  Finally we deal with the case \ref{one_leaf}. In this case $w$ is a leaf of $\mathcal G_0' = \mathcal G_0 \setminus \{v\}$, and we let
  \[
  S' = (S \setminus \{v\})\cup \{w\}.
  \]
  For $\sigma_v \in \sym_n$ is an involution we also define $\underline\pi(\sigma) \in (\mathcal P_{\le 2}(n)^{S'}$ by
  \begin{equation} \label{new_constraints}
  \pi(\sigma)_u = \begin{cases}
             \pi_u \text{ if } u \in S \\
             \pi_\sigma \text{ if } u = w.
             \end{cases}
  \end{equation}
  where $\pi_\sigma$ is the partition into cycles of $\sigma$. Then we have, using Lemma \ref{1vert} and the induction hypothesis, and the fact that $\gamma(\mathcal G_0) = \gamma(\mathcal G_0') + 1/4$: 
  \begin{align*}
  h_n^{\underline\pi}(\cox(\mathcal G_0)) &= \sum_{\sigma \in \stab(\pi_v)} h_n^{\underline\pi(\sigma)}(\cox(\mathcal G_0')) \\
  &\le \sum_{k=0}^n N_{k, \pi_v}^1(n) C^{n\log\log(n)} (n!)^{\gamma(\mathcal G_0') - \frac 1{2n} \left( \frac {n-k}{4n} + \sum_{u \in S', u \not= w} \pi_{v, 2}\right)} \\
  &\le C^{n\log\log n} \sum_{k=0}^n (n!)^{\frac 1 2 - \frac{\pi_{v,2}}{2n} + \gamma(\mathcal G_0') - \frac k{4n} + \frac 1{2n} \left( \frac {n-k}{4n} + \sum_{u \in S', u \not= w} \pi_{v, 2}\right)} \\
  &= C^{n\log\log n} \sum_{k=0}^n (n!)^{\gamma(\mathcal G_0') + \frac 1 4 - \frac 1{2n} \sum_{u \in S} \pi_{u, 2}} \\
  &\le C^{n\log\log n} (n!)^{\gamma(\mathcal G_0) - \frac 1{2n} \sum_{u \in S} \pi_{u, 2}}
  \end{align*}
  which finishes the proof in this case. 
\end{proof}


\subsection{Gluing vertices}

\begin{prp} \label{gluvert}
  If there exists vertices $w_1, \ldots, w_s$ which are pairwise nonadjacent and such that
  \[
  \mathcal G_0 = \mathcal G \setminus \{w_1, \ldots, w_s\}
  \]
  is a tree so that all vertices adjacent to one of the $w_i$ are leaves in $\mathcal G_0$. Then $\mathcal G, S$ satisfies the conclusion of the Conjecture \ref{conjecture_cox}.
\end{prp}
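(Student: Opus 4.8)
The plan is to condition on the images $\tau_i:=\rho(\sigma_{w_i})$ of the glued vertices and reduce to the constrained tree bound of Proposition~\ref{tree_constraints}. More precisely, I would prove the sharpening: for every set $S$ of leaves of $\mathcal G_0$ disjoint from $\bigcup_i N_1(w_i)$ and every $\underline\pi\in\mathcal P_{\le 2}(n)^S$,
\[
h_n^{\underline\pi}(\cox(\mathcal G)) \le C^{n\log\log n}\,(n!)^{\gamma(\mathcal G)-\frac1{2n}\sum_{v\in S}\pi_{v,2}}.
\]
Taking $S=\emptyset$ gives $h_n(\cox(\mathcal G))\le C^{n\log\log n}(n!)^{\gamma(\mathcal G)}$, hence $\limsup_n\frac{\log s_n(\cox(\mathcal G))}{n\log n}\le\gamma(\mathcal G)-1$, which together with the general lower bound establishes Conjecture~\ref{conjecture_cox} for $\mathcal G$; the constrained form is also what one needs for the general case below. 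Throughout I assume, as one may under the hypotheses of Theorem~\ref{Main_coxeter}, that the sets $N_1(w_i)$ are pairwise disjoint.

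Since the $w_i$ are pairwise nonadjacent and adjacent in $\mathcal G$ only to leaves of $\mathcal G_0$, a homomorphism $\rho\colon\cox(\mathcal G)\to\sym_n$ amounts to a tuple of involutions $\tau_1,\dots,\tau_s\in\inv_n$ together with $\rho_0:=\rho|_{\cox(\mathcal G_0)}\in\hom(\cox(\mathcal G_0),\sym_n)$ such that $\rho_0(\sigma_\ell)$ commutes with $\tau_i$ whenever $\ell\in N_1(w_i)$. The elementary observation driving the argument is that an involution commutes with $\tau_i$ precisely when it preserves the orbit partition $\pi_{\tau_i}$ of $\langle\tau_i\rangle$, which belongs to $\mathcal P_{\le 2}(n)$ and has exactly $k_i$ blocks of size $2$ when $\tau_i$ moves $2k_i$ points. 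Writing $d_i:=\card{N_1(w_i)}$, this gives $h_n^{\underline\pi}(\cox(\mathcal G)) \le \sum_{\tau_1,\dots,\tau_s} h_n^{\underline\pi'}(\cox(\mathcal G_0))$, where $\underline\pi'$ extends $\underline\pi$ by assigning $\pi_{\tau_i}$ to each of the $d_i$ leaves in $N_1(w_i)$ and where $S\cup\bigcup_i N_1(w_i)$ is indeed a set of pairwise nonadjacent leaves of $\mathcal G_0$. Grouping the $\tau_i$ by their numbers of transpositions $k_i$, bounding $\card{\inv_{n,k_i}}\le 2^n n^{k_i}\le C^n(n!)^{k_i/n}$ by Lemma~\ref{lem_invcount}, and invoking Proposition~\ref{tree_constraints} for $\mathcal G_0$, one arrives at
\[
h_n^{\underline\pi}(\cox(\mathcal G)) \le C^{n\log\log n}\,(n!)^{\gamma(\mathcal G_0)-\frac1{2n}\sum_{v\in S}\pi_{v,2}}\cdot\max_{0\le k_i\le n/2}\,(n!)^{\sum_{i=1}^s\frac{k_i}{n}\left(1-\frac{d_i}{2}\right)}.
\]
The inner maximum equals $(n!)^{\sum_i c_i}$ where $c_i=\tfrac12$ if $d_i=0$, $c_i=\tfrac14$ if $d_i=1$, and $c_i=0$ if $d_i\ge 2$ (attained at $k_i=\floor{n/2}$ when $d_i\le1$, and at $k_i=0$ when $d_i\ge2$).

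To finish I would check the combinatorial inequality $\gamma(\mathcal G_0)+\sum_i c_i\le\gamma(\mathcal G)$. Fix a clique collection $\mathcal C_0$ of $\mathcal G_0$ realising $\gamma(\mathcal G_0)$; since $\mathcal G_0$ is a tree its components are singletons and edges, and no triangle appears in $\mathcal G$ either, so the same holds for clique collections of $\mathcal G$. For each $i$ with $d_i\le1$ one incorporates $w_i$ by a local surgery on $\mathcal C_0$: if $d_i=0$ simply adjoin the singleton $\{w_i\}$; if $d_i=1$ with $N_1(w_i)=\{\ell_i\}$, then according to whether $\ell_i$ lies outside $\mathcal C_0$, forms a singleton component, or is paired with its parent $p(\ell_i)$, respectively adjoin $\{w_i\}$, replace $\{\ell_i\}$ by $\{w_i,\ell_i\}$, or replace $\{\ell_i,p(\ell_i)\}$ by $\{p(\ell_i)\}$ and adjoin $\{w_i\}$. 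Each move raises $w$ by at least $c_i$; since the $N_1(w_i)$ are disjoint and each $w_i$ has $\ell_i$ as its only neighbour in $\mathcal G$, these moves do not interfere (the case of several $w_i$ whose leaves are siblings is handled by treating the common parent once), so the result is a genuine clique collection of $\mathcal G$, whence $\gamma(\mathcal G)\ge\gamma(\mathcal G_0)+\sum_i c_i$.

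I expect the combinatorial step to be the only delicate point: the surgery must be arranged so that adjoining $w_i$ never creates an induced path $w_i-\ell_i-p(\ell_i)$ inside a single component and so that the simultaneous moves for different $w_i$ remain compatible (in fact one gets equality $\gamma(\mathcal G)=\gamma(\mathcal G_0)+\sum_i c_i$, though only the inequality is needed). Everything preceding it is a routine repackaging of Proposition~\ref{tree_constraints}.
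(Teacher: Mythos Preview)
Your approach is correct and essentially the same as the paper's: both condition on the involutions $\tau_i=\rho(\sigma_{w_i})$, thereby imposing partition constraints at the leaves $N_1(w_i)$ of $\mathcal G_0$, and then invoke Proposition~\ref{tree_constraints}. The paper does this by induction on $s$, removing one $w_i$ at a time, and it sidesteps the cases $d_i\le 1$ entirely by observing that then $w_i$ can be absorbed into the tree (so the tree case applies directly); only $d_i\ge 2$ is treated explicitly, and there the combinatorial input reduces to the trivial $\gamma(\mathcal G_0)\le\gamma(\mathcal G)$ together with the computation that the extra exponent $\tfrac12-\tfrac{d_i}{4}+(d_i-2)\tfrac{k}{4n}$ is nonpositive. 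Your all-at-once treatment handles every $d_i$ uniformly and therefore requires the explicit surgery argument for $\gamma(\mathcal G_0)+\sum_i c_i\le\gamma(\mathcal G)$, which is slightly more work but also more self-contained. You are also right that what is actually being proved (and what is needed for the subsequent gluing of stars) is the constrained bound of Proposition~\ref{coxeter_constraints}, not merely the conclusion of Conjecture~\ref{conjecture_cox}.
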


\begin{proof}
  We prove this by induction on $s$. When $s=1$ and $\mathcal G$ is not a tree we have that the number of neighbours of $w_1$ is at least two: let $p \ge 2$ denote this number. Let $S' = S \cup \{v: v \sim w_1\}$, then we have 
  \[
  h_n^{\underline\pi}(\mathcal G) = \sum_{\sigma\in\sym_n[2]} h_n^{\underline\pi(\sigma)}(\mathcal G_0)
  \]
  where $\underline\pi(\sigma)$ is defined as in \eqref{new_constraints}. Using Proposition \ref{tree_constraints} it follows that:
  \begin{align*}
  h_n^{\underline\pi}(\mathcal G) &\le C^{n\log\log n} \sum_{k=0}^m (n!)^{\gamma(\mathcal G) - \frac 1{2n} \sum_{v \in S} \pi_{v, 2} - p\frac {n-k}{4n}} (n!)^{\frac 1 2 - \frac k{2n}} \\
  & \le C^{n\log\log n} (n!)^{\gamma(\mathcal G) - \frac 1{2n} \sum_{v \in S} \pi_{v, 2}} (n!)^{(p-2)\left(\frac k{4n} - \frac 1 4\right)} \le C^{n\log\log n} (n!)^{\gamma(\mathcal G) - \frac 1{2n} \sum_{v \in S} \pi_{v, 2}}
  \end{align*}
  which finishes the proof in this case. 

  If $s > 2$ we can use exactly the same argument by replacing Proposition \ref{tree_constraints} by the induction hypothesis. 
\end{proof}


\subsection{Gluing stars} \label{glustar}

In this subsection we work in the following setting: we assume we have a graph $\mathcal G$ and a vertex $v_0 \in \mathcal G$ whose 1-neighbourhood $N_1(v_0)$ is a star, each leaf of which is adjacent to exactly one vertex of $\mathcal G_0 = \mathcal G \setminus N_1(v_0)$, which is a leaf in $\mathcal G_0$. Moreover we assume that the conclusion of Proposition \ref{coxeter_constraints} is known to hold for $\mathcal G_0$. We will forget about the constraints on leaves of $\mathcal G$ for notational ease in the proof below, but it is clear from the argument that they can be incorporated. 

\subsubsection{Reducing to the small orbit counting}

For any $0 \le m, K \le n$ we can define $S(m, K)$ and $L(n-m, K)$ as in Section \ref{sec_raagsmalllargeorbits}. The the proof of the estimates \eqref{dec_small_large} also works when $\Gamma$ is a Coxeter group, using the second statement in Lemma \ref{lem_centbd2}. The analogue of the estimate \eqref{large}, namely that for $K = \floor{\log n}$ we have 
\[
|L(n-m, K)| \le C^{n\log\log n} (n-m)^{\gamma(\mathcal G)}
\]
can also be proven to hold with no modifications to the argument there (except for the induction hypothesis used). 

So we get that :
\begin{equation} \label{same_step}
  h_n(\cox(\mathcal G)) \le C^{n\log\log(n)} \sum_{m=0}^n ((n-m)!)^{\gamma(\mathcal G)} S(m, K). 
\end{equation}
It remains to estimate the terms $S(m, K)$. First we note that if for $1 \le l \le K$ we define $P(m, l)$ to be the subset of $S(m, K)$ consisting of representations in which the orbits of the generators corresponding to $N_1(v_0)\setminus \{v_0\}$ are all of size exactly $l$ then applying the reasoning leading to \eqref{dec_small_large} at most $\log_2 K$ times we get that
\[
\card{S(m, K)} \le C^{n (\log\log(n))^2} \sum_{\substack{m_1, \ldots, m_K \\ \sum_l m_l = m}} \prod_{l=1}^K \card{P(m_l, l)}.
\]
Here the factor up front is an upper bound for $u(n)^{\log_2(K)}$, where $u:\NN\to \RR$ is the function defined in Lemma \ref{lem_centbd2}. The rest of this section is devoted to the proof of the fact that
\begin{equation} \label{fixed-size}
  \card{P(m, l)} \le C^{m\log\log m} (m!)^{\gamma(\mathcal G)},
\end{equation}
which together with the bounds above, implies Proposition \ref{coxeter_constraints}.


\subsubsection{Small orbits}

When $l = 1$ we have $P(m, 1) = h_m(\cox(\mathcal G \setminus N_1(v_0))$. We can apply the induction hypothesis to $\mathcal G \setminus N_1(v_0)$ and since $\mu_\cox$ is decreasing in subgraphs we get
\begin{equation} \label{vertex_delete}
  P(m_1, 1) \le C^{m\log\log(m)} (m!)^{\mu_\cox(\mathcal G)}
\end{equation}

Now suppose that $l \ge 2$. Recall that we defined
\[
\mathcal G_0 = \mathcal G \setminus N_1(v_0).
\]
As $\mathcal G$ has no triangles $\mu_\cox(\mathcal H) \in \frac 1 4 \NN$ for any subgraph $\mathcal H \subset \mathcal G$ and hence there are then three cases to consider: 
\begin{enumerate}
\item \label{cas_facile} $\mu_\cox(\mathcal G_0) \leq \mu_\cox(\mathcal G) - 1$ ;

\item \label{cas_difficile1} $\mu_\cox(\mathcal G_0) = \mu_\cox(\mathcal G) - 3/4$ ;

\item \label{cas_difficile2} $\mu_\cox(\mathcal G_0) = \mu_\cox(\mathcal G) - 1/2$.
\end{enumerate}
The case \ref{cas_facile} is dealt with as in \eqref{small}, without using any further hypothesis on the graph since this is exactly the case where we can use the conclusion of Lemma \ref{gap1_indep_nb}. The two remaining cases need more care. 


\subsubsection{Case \ref{cas_difficile2}} 

Let $v_1, \ldots, v_d$ be the neighbours of $v_0$ in $\mathcal G$. We want to count the representations $\rho : \Gamma \to \sym_m$ such that $\langle \rho(\sigma_{v_i}), i = 1, \ldots, d \rangle$ has exactly $m/l$ orbits each of size $l$. As in the RAAG case (Section \ref{sec_raagsmall}) there are at most 
\begin{equation} \label{count_smallreps}
C^{m\log\log(m)}(m!)^{1 - \frac 1 l}
\end{equation}
such representations. For such a representation $\rho$, $\rho(\sigma_{v_0})$ must permute the orbits and we get that there are at most
\begin{equation} \label{count_v0}
|(\sym_l \wr \sym_{m/l})[2]| \le C^{m\log\log(m)}(m!)^{\frac 1{2l}}
\end{equation}
choices for it.

Now let $w_1, \ldots, w_r$ be vertices of $\mathcal G_0$ such that each $w_i$ is adjacent to one of the $v_1, \ldots, v_d$. For each $i$ we have that $\rho(\sigma_{w_i})$ must preserve the partition $\pi_i \in \mathcal P_{\le l}(m)$ given by the orbits of
\[
\langle \rho(\sigma_{v_j}) : v_j \text{ adjacent to } w_i \rangle
\]
and as we assumed that $\mathcal G_0$ satisfies the conclusion of Proposition \ref{coxeter_constraints} it follows that there are at most
\[
C^{m\log\log(m)} \frac{h_m(\mathcal G_0)}{(m!)^{\frac 1{2m} \sum_{i=1}^r \sum_{j=2}^l (j-1)\pi_{i, j}}}
\]
choices for the restriction of $\rho$ to $\cox(\mathcal G_0)$. On the other hand, since each $v_j$ is adjacent to at least one $w_i$ (otherwise we have at least $\gamma(\mathcal G) \ge \gamma(\mathcal G_0) + 3/4$) and together the $\rho(\sigma_{v_j})$ act transitively on each orbit of size $l$ we see that we must have\footnote{This follows from the general fact that if $X = \bigcup_k X_k$ and $X_k \cap \bigcup_{a\not= k} X_a \not= \emptyset$ for all $k$ then $|X| - 1 \le \sum_k (|X_k| - 1)$. }
\[
\sum_{i=1}^r \pi_{i,2} \ge \frac m l (l-1). 
\]
So we get that the number of choices for the restriction is at most 
\begin{equation} \label{count_rest}
C^{m\log\log(m)} \frac{h_m(\mathcal G_0)}{(m!)^{\frac 1 2 - \frac 1{2l}}}. 
\end{equation}

We can finally estimate $P(m, l)$ by the product of \eqref{count_smallreps}, \eqref{count_v0} and \eqref{count_rest} to obtain :
\[
P(m, l) \le E^{m\log\log(m)} (m!)^{\left(1 - \frac 1 l \right) + \frac 1{2l} - \left(\frac 1 2 - \frac 1{2l}\right)} h_m(\mathcal G_0) = E^{m\log\log(m)} (m!)^{\frac 1 2} h_m(\mathcal G_0)
\]
and by the induction hypothesis and the fact that $\mu_\cox(\mathcal G_0) = \mu_\cox(\mathcal G) - 1/2$ we get that
\begin{equation} \label{est_P1}
  P(m, l) \le E^{m\log\log(m)} (m!)^{\mu_\cox(\mathcal G)}
\end{equation}
which finishes the proof in this case.


\subsubsection{Case \ref{cas_difficile1}} 

This case differs from the preceding in that we may have at most one vertex among the neighbours $v_1, \ldots, v_d$ of $v_0$ which is not adjacent to any leaf of $\mathcal G_0$. Thus in the sequel we will assume that $v_2, \ldots, v_d$ each have a neighbour $v_i \in \mathcal G_0$ such that $\gamma(\mathcal G_0 \setminus w_i) < \gamma(\mathcal G_0)$, but not $v_1$.

Thus we can now only apply the conclusion of Proposition \ref{coxeter_constraints} to the partitions $\pi_2, \ldots, \pi_d$. We can still prove the following inequality (the proof wil be given in the next paragraph)
\begin{equation} \label{dizufzg}
  \sum_{i=1}^r \pi_{i,2} \ge \frac m l \cdot \floor{\frac{l-1} 2}. 
\end{equation}
Since \eqref{count_smallreps} and \eqref{count_v0} remain valid, and $\floor{\frac{l-1} 2} \ge \frac{l-2}2$  we get that
\[
P(m, l) \le E^{m\log\log(m)} (m!)^{\left(1 - \frac 1 l \right) + \frac 1{2l} - \frac 1 2\left(\frac 1 2 - \frac 1 l\right)} h_m(\mathcal G_0) =  E^{m\log\log(m)} (m!)^{\frac 3 4} h_m(\mathcal G_0)
\]
which allows us to conclude.

It remains to prove \eqref{dizufzg}. Let $X$ be a set of cardinality $l$ (one of the orbits of $\rho(\langle \sigma_{v_1}, \ldots, \sigma_{v_d}\rangle)$). It equals the union of the orbits $X_1, \ldots, X_p$ of $\rho(\langle \sigma_{v_2}, \ldots, \sigma_{v_d}\rangle)$ and of the 2-cycles of $\rho(\sigma_{v_1})$. By the case treated above we have that
\[
\sum_{i=1}^r \pi_{i, 2} \ge \sum_k (|X_k| - 1). 
\]
Now $\rho(\sigma_{v_1})$ has at most $l/2$ 2-cycles and it follows that
\[
\frac l 2 + \sum_k (|X_k| - 1) \ge l - 1
\]
which finishes the proof when we apply it to all orbits and sum the results.


\appendix

\section{Lemmas on symmetric groups} \label{appendix}

\subsection{Block decompositions of permutation groups} \label{blockdec}

For $n \in \NN$ we will denote by $[n]$ the set of integers between $1$ and $n$. 

Recall that a permutation group $G \subset \sym_n$ is said to be {\em primitive} if there does not exist disjoint nonempty subsets $\Omega_1, \ldots, \Omega_r$ in $[n]$ of which at least one has cardinality greater than $1$ such that $g\Omega_i \in \{\Omega_1, \ldots, \Omega_r\}$ for all $g \in G$. If there exist such sets $\Omega_i$ and in addition they cover $[n]$ then they are said to form a block system for $G$. 

If $G$ is transitive (but not necessarily primitive) all blocks $\Omega_i$ need to be of the same size and hence  $r|n$. We get a morphism $G \to \sym_r$ from the action of $G$ on $\{\Omega_1, \ldots, \Omega_r\}$. Let $G'$ be its image and $H$ the stabiliser of $\Omega_1$ in $G$. We identify each $\Omega_i$ with $[d]$ (where $d=n/r$) so that $H \subset \sym_d$. Then $G$ is isomorphic to the wreath product $G' \wr H$ that is $G \cong G' \rtimes H^{n/d}$ and we can identify the action of $G$ on $[n]$ with that of $G' \wr H$ where $(g', (h_1, \ldots, h_{n/d}))$ acts by
\[
(g', (h_1, \ldots, h_{n/d})) \cdot x = \left(l_{g'(i)}^{-1} \circ h_i \circ l_i\right) (x)
\]
for $x \in \Omega_i$, where $l_j$ is the bijection from $\Omega_j$ to $[d]$.

The group $H \subset \sym_r$ is primitive if and only if the block system is minimal among all block systems for $G$. There always exists such a system and it follows that any transitive subgroup $G \subset \sym_n$ can be written as $G' \wr H$ where $r|n$ and $r > 1$, $d = n/r$, $G' \subset \sym_r$ is a transitive subgroup and $H \subset \sym_d$ is a primitive subgroup. 


\subsection{The number of involutions in a permutation group}

For a group $G$ we denote by $G[2]$ be the subset of involutions in $G$ (including the identity, so in particular it's never empty). The following proposition shows that the number if involutions in a permutation group is, at the scale which interests us, equivalent to the square root of the order of the group (better bounds which are close to being sharp and are proven using a completely different approach are available at least in one direction, see \cite{Robinson_MO}). 

\begin{prp} \label{nb_involutions}
  There exists a function $u : \NN \to \RR$ such that 
  \[\log u(n) = O(n\log\log(n))\] 
  as $n\to\infty$ so that for every $n \ge 1$ and every permutation group $G \subset \sym_n$ we have
  \[
  u(n)^{-1} |G|^{\frac 1 2} \le |G[2]| \le u(n) |G|^{\frac 1 2}.
  \]
\end{prp}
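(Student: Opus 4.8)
The plan is to reduce, by induction on $n$, to the case of primitive groups, using the block decomposition recalled in Subsection \ref{blockdec}. First I would handle the trivial range: for $n$ bounded we may simply absorb everything into $u(n)$, so assume $n$ large and that the statement holds for all smaller values. Given a permutation group $G\subset\sym_n$, decompose $[n]$ into the orbits $o_1,\dots,o_s$ of $G$; then $G$ embeds in $\prod_i G|_{o_i}$ (in fact is a subdirect product), and both $|G|$ and $|G[2]|$ factor multiplicatively up to the obstruction that $G$ need not be the full product. The cheap direction is $|G[2]|\le\prod_i |G|_{o_i}[2]|$ and $|G|\le\prod_i |G|_{o_i}|$ is false in the wrong direction, so one must be a little careful; the clean way is to note $|G| = |G|_{o_1}|\cdot|K|$ where $K$ is the kernel of $G\to G|_{o_1}$, and $|G[2]|$ relates to $|G|_{o_1}[2]|$ and $|K[2]|$ by the same bounds with a factor depending only on the number of orbits (hence on $n$). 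Iterating over the at most $n$ orbits costs a factor $u_0(n)$ with $\log u_0(n)=O(n)$, and reduces us to transitive $G$.

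For transitive $G\subset\sym_n$ I would use the block system: write $G\cong G'\wr H$ acting imprimitively with $G'\subset\sym_r$ transitive, $H\subset\sym_d$ primitive, $rd=n$, $r>1$, as in Subsection \ref{blockdec}. An involution in $G'\wr H$ is a pair $(g',(h_1,\dots,h_r))$ with $g'$ an involution in $\sym_r$ lying in $G'$, and the $h_i$ constrained: on a fixed point $i$ of $g'$ we need $h_i\in H[2]$, while on a transposition $\{i,j\}$ of $g'$ we need $h_j = h_i^{-1}$ (conjugated by the gluing maps), so $h_i$ ranges over all of $H$ freely and $h_j$ is then determined. Hence, if $g'$ has $k$ transpositions,
\[
|\{\text{involutions of }G\text{ over }g'\}| \le |H|^{k}\cdot |H[2]|^{\,r-2k},
\]
and summing over $g'\in G'[2]$, using $|H[2]|\le u(d)|H|^{1/2}$ from the inductive hypothesis (note $d<n$) together with $|G|=|G'|\cdot|H|^{r}$, one gets
\[
|G[2]| \le \Bigl(\sum_{k}|G'[2]_k|\,|H|^{k-r/2}\Bigr)\,|H|^{r/2}\,u(d)^{r}
\]
which, bounding $|G'[2]_k|\le |G'[2]|\le u(r)|G'|^{1/2}$ and the sum over $k\le r/2$ by $r$ times its largest term, yields $|G[2]|\le r\,u(r)\,u(d)^{r}\,|G|^{1/2}$. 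The lower bound is obtained symmetrically: one exhibits at least $u(r)^{-1}|G'|^{1/2}\cdot u(d)^{-\lceil r/2\rceil}|H|^{\lceil r/2\rceil}$ involutions by taking $g'$ a fixed-point-free (or near fixed-point-free) involution in $G'$ — such exists once we allow a mild loss, or else iterate — and choosing the $h_i$ on a transversal of the transpositions arbitrarily. Collecting the losses $r\,u(r)\,u(d)^{r}$ and tracking the recursion: writing $L(n)=\log u(n)$, the bound above gives $L(n)\le \max_{rd=n,\,r>1}\bigl(r L(d) + L(r) + O(\log n)\bigr)$, and since $d\le n/2$ in the worst split $r$ can be as large as $n/d$; the recursion $L(n)\le (n/d)L(d)+\cdots$ is exactly the kind that produces $L(n)=O(n\log\log n)$ — the extra $\log\log n$ over a naive $O(n)$ comes from the depth $O(\log\log n)$ of iterating $d\mapsto d^{?}$, i.e. from the fact that the block tower can have many levels but the per-level cost is controlled.

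The main obstacle I anticipate is not any single inequality but the bookkeeping of the recursion: one has to choose the formulation of $u$ carefully so that the multiplicative factors $r\,u(r)\,u(d)^{r}$ accumulated at each of the (at most $O(\log n)$, but effectively $O(\log\log n)$-many \emph{relevant}) levels of the orbit/block decomposition still sum to $O(n\log\log n)$ in the logarithm. The cleanest route is probably to prove directly, by strong induction, that $|G[2]|\le C^{n\log\log n}|G|^{1/2}$ for an absolute constant $C$ (and the mirror lower bound), so that at the induction step the factor $C^{d\log\log d}$ raised to the power $r=n/d$ becomes $C^{n\log\log d}\le C^{n\log\log n}$, and the leftover $O(\log n)$-type losses are swallowed by increasing $C$; one just needs $\log\log d \le \log\log n - \epsilon$ is \emph{not} needed — equality $r\log\log d\le r\log\log n$ suffices — so the induction closes. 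A secondary subtlety is the lower bound in the primitive-reduction step: ensuring $G'$ contains an involution with enough transpositions (or handling the case $G'$ is, say, cyclic of odd order, where $G'[2]=\{e\}$) — there one falls back on the fact that then $|G'|$ is itself only exponential, or passes the power to $H$ instead; this case analysis is routine but must be done.
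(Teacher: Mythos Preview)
Your strategy --- reduce to transitive via orbits, then peel off a primitive block factor and induct --- is exactly the paper's, and your wreath-product involution count (fixed points of $g'$ contribute an $|H[2]|$ factor, transposed block-pairs a free $|H|$ factor) is the same computation the paper carries out.

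The genuine gap is the primitive case. You announce a reduction ``to the case of primitive groups'' in your first sentence and then never return to it: when $G\subset\sym_n$ is itself primitive there is no block system with $1<r,d<n$, your inductive step does not fire, and strong induction on $n$ does not save you --- the primitive case at level $n$ needs its own argument. The paper supplies this with the Praeger--Saxl theorem (a primitive subgroup of $\sym_n$ other than $\alt_n,\sym_n$ has order at most $4^n$, whence $C^{-n}|G|^{1/2}\le 1\le |G[2]|\le|G|\le C^n|G|^{1/2}$ trivially) together with the Chowla--Herstein--Moore asymptotic for $|\sym_n[2]|$ to handle the two remaining groups. This is the only nontrivial external input in the whole proof, and without it your induction has no base beyond the bounded-$n$ range. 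A related point: once the primitive bound is in hand, the paper applies it \emph{directly} to the primitive piece $H$, getting $|H[2]|\le C^d|H|^{1/2}$ rather than your inductive $u(d)|H|^{1/2}$; this yields the simpler one-variable recursion $u(n)\ge C^n\,u(n/d)$ with $d\ge 2$, for which $u(n)=C^{n\log\log n}$ is immediately seen to work, and avoids the two-variable recursion $L(n)\le rL(d)+L(r)+O(\log n)$ you set up (which does close, but with more bookkeeping and no gain).
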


\begin{proof}
  It is easy to see that we may assume that $G$ is transitive. Suppose not, then we can write $G = G_1 \times \cdots G_k$ where each $G_i \subset \sym_{n_i}$ is transitive. Applying the argument below to each $G_i$ we get that $G_i[2] \le u(n_i) |G_i|^{\frac 1 2}$. We will see that $u$ is submultiplicative and hence it follows that
  \[
  |G[2]| = \prod_i |G_i[2]| \le \prod_i u(n_i) |G_i|^{\frac 1 2} \le u(n) |G|^{\frac 1 2}
  \]
  and a similar argument applies to deduce the lower bound for $|G[2]|$ from that for the $|G_i[2]|$.

  We first deal with the case where $G \subset \sym_m$ is primitive. In \cite{PraSax}, Praeger and Saxl prove that this implies that either $\card{G} \leq 4^m$ or $G$ is either $\alt_m$ or $\sym_m$ \footnote{In fact, sharper bounds are available, but the above suffices for us. See for instance \cite[Theorem 16.4.1]{Lubotzky_Segal} and references therein.} Thus in the first case there is a $C >1$, independent of $m$ and $G$, such that
  \begin{equation} \label{primitive}
  C^{-m}|G'|^{1/2} \le 1 \le |G'[2]| \le |G'| \le C^m |G'|^{1/2}.
  \end{equation}
  Moreover, in \cite{CHM}, Chowla, Herstein and Moore prove that the number of involutions in $\sym_m$ is $\sim (m!)^{1/2}e(m)$ where $e$ is subexponential, from which the case of $\alt_m$ readily follows. 

To prove the general case, we proceed by induction over $n$.
  Using the block decomposition of \ref{blockdec} we may assume that there are $d|n$, so that $d\geq 2$, a primitive subgroup $H \le \sym_d$ and a transitive subgroup $G' \le \sym_{n/d}$ such that $G = G' \wr H$.   
  
  The morphism $G \to G'$ restricts to a map $\pi : G[2] \to G'[2]$ and we have
  \[
  |G[2]| \le |G'[2]| \cdot \max_{g' \in G[2]} |\pi^{-1}\{ g' \}|.
  \]
  Let $g' \in G'[2]$ have $f$ fixed points. We will assume that $g'$ fixes the blocks $\{1, \ldots, f\}$ and transposes every subsequent pair of blocks. Note that
  \begin{itemize}
  \item within the blocks that $g'$ fixes, every element of  $\pi^{-1}\{ g' \}$ needs to act as an involution.
  \item within two blocks that are permuted by $g'$, every element of  $\pi^{-1}\{ g' \}$ can act by an arbitrary element of $H$. However, because of the wreath product structure, these two elements will be each others inverses. Figure \ref{pic_blocks} offers a graphical depiction of this situation.
  \end{itemize}
  
  \begin{figure}[H]
    \begin{center}
      \begin{overpic}[scale=1]{./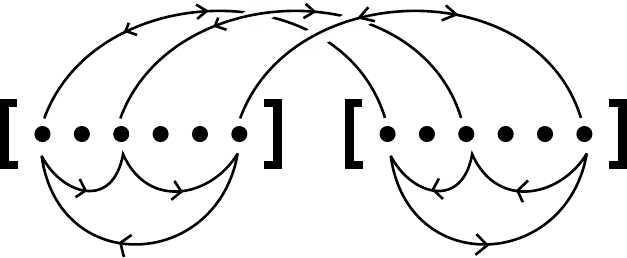} 
        \put (48,42) {$g'$}
        \put (20,-5) {$h$}
        \put (75,-5) {$h^{-1}$}
      \end{overpic}
    \end{center}
    \caption{Part of the action of $G'\wr H$ on two blocks. Because $h$ acts by a $3$-cycle on three elements in the block on the left hand side, it acts in the opposite direction on their images by $g'$ in the block on the right hand side.}
    \label{pic_blocks}
  \end{figure} 
  
  In symbols this means that an element in $\pi^{-1}\{ g' \}$ is necessarily of the form
  \[
  (g', (h_1, \ldots, h_f, h_{f+1}, h_{f+1}^{-1}, \ldots, h_{(n/d - f)/2}, h_{(n/d - f)/2}^{-1}))
  \]
  where $h_1, \ldots, h_f$ are involutions in $H$ and $h_{f+1}, \ldots, h_{(n/d - f)/2}$ are arbitrary elements of $H$. Thus there are exactly
  \[
  |H[2]|^f \cdot |H|^{\frac{n/d - f} 2}
  \]
  elements in $\pi^{-1}\{g'\}$. Since $H$ is primitive we have that
  \[
  C^{-d} |H|^{1/2} \le |H[2]| \le C^d |H|^{1/2}
  \]
  so that
  \[
  C^{-fd} |H|^{f/2} \cdot |H|^{\frac{n/d - f} 2} \le |\pi^{-1}\{ g' \}| \le C^{fd} |H|^{f/2} \cdot |H|^{\frac{n/d - f} 2} 
  \]
  which in turn yields
  \[
  C^{-n} |H|^{\frac{n/d} 2} \cdot |G'[2]| \le |G[2]| \le C^n  |H|^{\frac{n/d} 2} \cdot |G'[2]| 
  \]
  and using the induction hypothesis (and the fact that $d\geq 2$) it follows that 
  \begin{equation} \label{HR}
    C^{-n} |H|^{\frac{n/d} 2} \cdot u(n/d)^{-1} |G'|^{\frac 1 2} \le |G[2]| \le C^n  |H|^{\frac{n/d} 2} \cdot u(n/d) |G'|^{\frac 1 2}. 
  \end{equation}
  Now since we have
  \[
  |G| = |G'| \cdot |H|^{n/d}
  \]
  we get the desired bounds from \eqref{HR} and \eqref{primitive}, if we can prove that we can find $u$ such that 
  \begin{equation} \label{bad}
    C^n u(n/d) \le u(n) 
  \end{equation}
  for large enough $n$. An elementary computation shows that the function 
  \[u(n) = C^{n\log\log(n+\exp(\exp(2)))},\]
  for example, satisfies this property. 
\end{proof}


\subsection{Estimates for centralisers}

\begin{lem} 
\label{lem_transitive}
Let $H$ be a transitive subgroup of $\sym_n$. Then $\card{Z_{\sym_n}(H)} \leq  n$.  
\end{lem}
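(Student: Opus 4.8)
The plan is to prove the stronger structural fact that $Z := Z_{\sym_n}(H)$ acts \emph{freely} (semiregularly) on $[n]$, from which the bound $\card{Z} \le n$ follows immediately by the orbit--stabiliser theorem.

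First I would establish the key claim: if $g \in Z$ fixes some point $i \in [n]$, then $g = \id$. To see this, take any $j \in [n]$; since $H$ is transitive there is $h \in H$ with $h(i) = j$, and then
\[
g(j) = g(h(i)) = h(g(i)) = h(i) = j,
\]
where the middle equality uses $gh = hg$. Hence $g$ fixes every point of $[n]$, i.e. $g = \id$.

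It follows that every point stabiliser $\stab_Z(i)$ is trivial, so by orbit--stabiliser each $Z$-orbit in $[n]$ has cardinality exactly $\card{Z}$. Since these orbits partition $[n]$, we get $\card{Z} \mid n$, and in particular $\card{Z} \le n$.

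There is no genuine obstacle here: the only input is the transitivity of $H$, used exactly once to move a fixed point around, and the argument is just the classical observation that the centraliser of a transitive permutation group is semiregular. The only thing to be slightly careful about is that the freeness claim requires $gh = hg$ for \emph{all} $h \in H$, which is exactly the hypothesis $g \in Z_{\sym_n}(H)$.
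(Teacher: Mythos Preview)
Your argument is correct: the centraliser of a transitive permutation group acts semiregularly, and this gives $\card{Z} \mid n$, in particular $\card{Z} \le n$. The paper does not give a direct proof but simply cites \cite[Theorem 4.2A(i)]{DM}, which is precisely the semiregularity statement you establish; so your proof is the standard unpacking of that reference rather than a different route.
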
 

\begin{proof}
  This follows immediately from \cite[Theorem 4.2A(i)]{DM}. 
\end{proof} 

\begin{lem}\label{lem_centbd2}
  Let $0\leq k\leq n\in\mathbb{N}$ and $\sym_k\times\sym_{n-k}\simeq H<\sym_n$. Then for any $G<\sym_n$ we have
  \[
  \card{\cent_{\sym_n}(G)} \leq 2^n \cdot \card{\cent_H(G)}.
  \]
  In addition, if $u$ is the function from Theorem \ref{nb_involutions} then
  \[
  \card{\cent_{\sym_n}(G)[2]} \leq 2^n u(n)^2 \cdot \card{\cent_H(G)[2]}.
  \]  
\end{lem}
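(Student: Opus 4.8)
The plan is to reduce both inequalities to a single bound on the index $[\cent_{\sym_n}(G):\cent_H(G)]$, which can be read off from elementary group counting, and then to pass to involutions using Proposition~\ref{nb_involutions}.

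First I would record the trivial identification $\cent_H(G)=\cent_{\sym_n}(G)\cap H$: an element of $H$ commutes with every element of $G$ precisely when it does so inside $\sym_n$. Write $Z=\cent_{\sym_n}(G)$, a subgroup of $\sym_n$. Since $Z$ and $H$ are both subgroups of $\sym_n$, the product set $ZH\subseteq\sym_n$ has cardinality $|ZH|=|Z|\cdot|H|/|Z\cap H|$, and $|ZH|\le n!$; hence
\[
\frac{|\cent_{\sym_n}(G)|}{|\cent_H(G)|}=\frac{|Z|}{|Z\cap H|}\le\frac{n!}{|H|}=\binom{n}{k}\le 2^{n},
\]
which is the first inequality. (The extreme cases $k\in\{0,n\}$ are trivial, since then $H=\sym_n$.)

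For the second inequality I would apply Proposition~\ref{nb_involutions} to the permutation subgroups $Z$ and $Z\cap H$ of $\sym_n$: this gives $|Z[2]|\le u(n)\,|Z|^{1/2}$ and $|(Z\cap H)[2]|\ge u(n)^{-1}|Z\cap H|^{1/2}$, where $u$ is the function from that proposition. Dividing these and inserting the bound $|Z|/|Z\cap H|\le 2^{n}$ just obtained yields
\[
\frac{|\cent_{\sym_n}(G)[2]|}{|\cent_H(G)[2]|}\le u(n)^{2}\Bigl(\frac{|Z|}{|Z\cap H|}\Bigr)^{1/2}\le u(n)^{2}\,2^{n/2}\le 2^{n}u(n)^{2},
\]
which is exactly the claimed estimate.

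I do not expect a genuine obstacle here. The only points to keep straight are that the formula $|AB|=|A|\cdot|B|/|A\cap B|$ is used purely as an identity of cardinalities of subsets of $\sym_n$ (with no claim that $ZH$ is a subgroup), and that Proposition~\ref{nb_involutions} is stated for arbitrary permutation subgroups of $\sym_n$, so it applies verbatim to both $Z$ and $Z\cap H$. In fact this argument gives the slightly sharper factor $2^{n/2}$ in the second inequality, but the stated $2^{n}$ suffices for the applications.
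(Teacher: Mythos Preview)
Your proof is correct and follows essentially the same approach as the paper: both arguments bound the index $[\cent_{\sym_n}(G):\cent_H(G)]$ by $[\sym_n:H]=\binom{n}{k}\le 2^n$ (the paper via the injective coset map $\cent_{\sym_n}(G)/\cent_H(G)\hookrightarrow\sym_n/H$, you via the equivalent product formula $|ZH|=|Z||H|/|Z\cap H|\le n!$), and then deduce the involution bound directly from Proposition~\ref{nb_involutions}. Your observation that the second inequality actually holds with the sharper factor $2^{n/2}$ is correct.
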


\begin{proof}
  We have
  \[
  \card{\cent_{\sym_n}(G)} = \card{\cent_{\sym_n}(G)/\cent_H(G)} \cdot \card{\cent_H(G)}.
  \]
  There is well-defined injective map
  \[
  \cent_{\sym_n}(G)/\cent_H(G) \to \sym_n/H.
  \]
  As such
  \[
  \card{\cent_{\sym_n}(G)/\cent_H(G)} \leq \card{\sym_n/H} = \binom{n}{k} \leq 2^n,
  \]
  which proves the first statement. The second one then immediately follows from this and Theorem \ref{nb_involutions}. 
\end{proof}

\bibliography{bib}{}
\bibliographystyle{amsplain}

\end{document}